\newtheorem{thm}{Theorem}[section]
\newtheorem*{thm*}{Theorem}
\newtheorem{lem}[thm]{Lemma}
\newtheorem{prop}[thm]{Proposition}
\newtheorem{cor}[thm]{Corollary}
\newtheorem{example}[thm]{Example}
\newtheorem{notation}[thm]{Notation}
\newtheorem{prob}[thm]{Problem}
\theoremstyle{definition}
\newtheorem{defin}[thm]{Definition}
\newtheorem{remark}[thm]{Remark}
\newtheorem{convention}[thm]{Convention}
\newtheorem{question}{Question}
\numberwithin{equation}{section}
\newcommand{\N}{\mathbb N}
\newcommand{\R}{\mathbb R}
\newcommand{\E}{\mathbb E}
\renewcommand{\P}{\mathbb P}
\newcommand{\xb}{\overline{x}}
\newcommand{\vp}{\varepsilon}
\newcommand{\supp}{{\rm{supp}}}
\newcommand{\dist}{{\rm{dist}}}
\newcommand{\cof}{{\rm{cof}}}
\newcommand{\vmo}{{\rm{VMO}}}
\newcommand{\bmo}{{\rm{BMO}}}
\newcommand{\umd}{{\rm{UMD}}}
\newcommand{\ie}{{\it i.e.}}
\newcommand{\keq}{\!=\!}
\newcommand{\kleq}{\!\leq\!}
\newcommand{\kge}{\!\ge\!}
\newcommand{\kle}{\!<\!}
\newcommand{\kin}{\!\in\!}
\DeclareMathOperator{\spa}{span}
\newcommandx{\todoin}[2][1=]{%
  \todo[inline, caption={todo}, #1]{%
    \begin{minipage}{\textwidth-20pt}#2\end{minipage}}}
\newcommandx{\minor}[2][1=]{\todo[linecolor=olive,backgroundcolor=olive!25,bordercolor=olive,#1]{#2}}
\newcommandx{\minorin}[2][1=]{\minor[inline, caption={minor issues}, #1]{%
    \begin{minipage}{\textwidth-40pt}{\normalsize #2}\end{minipage}}}
\newcommandx{\major}[2][1=]{\todo[linecolor=blue,backgroundcolor=blue!25,bordercolor=blue,#1]{#2}}
\newcommandx{\majorin}[2][1=]{\major[inline, caption={major tasks}, #1]{%
    \begin{minipage}{\textwidth-40pt}{\normalsize #2}\end{minipage}}}
\newcounter{proof}
{\stepcounter{proof}\begin{proof}}%
  {\end{proof}}%
\newcounter{proofstep}[proof]
\newenvironment{proofstep}[1][]%
{\refstepcounter{proofstep}\smallskip\par\noindent%
  \ifthenelse{\isempty{#1}}
  {\textsc{Step \theproofstep.}}
  {\textsc{#1.}}
  \noindent}%
{\par}%
\newcounter{proofcase}[proof]
{\refstepcounter{proofcase}\smallskip\par\noindent%
  \ifthenelse{\isempty{#1}}
  {\textsc{Case \theproofcase.}}
  {\textsc{#1.}}
  \noindent}%
{\par}%
\DeclareMathOperator{\cond}{\mathbb{E}}
\title{The factorization property of $\ell^\infty(X_k)$}
\author{R. Lechner}
\address{R. Lechner, Institute of Analysis, Johannes Kepler University Linz, Altenberger Strasse 69,
  A-4040 Linz, Austria}
\email{Richard.Lechner@jku.at}
\author{P. Motakis}
\address{P. Motakis, Department of Mathematics, University of Illinois at Urbana-Champaign, Urbana,
  IL 61801, USA}
\email{pmotakis@illinois.edu}
\author{P.F.X. M\"uller}
\address{P.F.X. M\"uller, Institute of Analysis, Johannes Kepler University Linz, Altenberger
  Strasse 69, A-4040 Linz, Austria}
\email{Paul.Mueller@jku.at}
\author{Th.~Schlumprecht}
\address{Th.~Schlumprecht, Department of Mathematics, Texas A\&M University, College Station, TX
  77843-3368, USA, and Faculty of Electrical Engineering, Czech Technical University in Prague,
  Zikova 4, 16627, Prague, Czech Republic}
\email{schlump@math.tamu.edu}
\thanks{The first and the third author were supported by the Austrian Science Foundation (FWF) under
  Grant Number Pr.Nr. P28352.  The second named author was supported by the National Science
  Foundation under Grant Number DMS-1912897.  The fourth named author was supported by the National
  Science Foundation under Grant Numbers DMS-1464713 and DMS-1711076.  The first and the third
  author were also supported by the 2019 workshop in Analysis and Probability at Texas A\&M
  University.}
\date{\today}
\keywords{} \subjclass[2010]{46B25, 46B26, 47A68, 30H10}
\begin{document}
\maketitle%

\begin{abstract}
  In this paper we consider the following problem: Let $X_k$, be a Banach space with a normalized
  basis $(e_{(k,j)})_j$, whose biorthogonals are denoted by $(e_{(k,j)}^*)_j$, for $k\in\N$, let
  $Z=\ell^\infty(X_k:k\kin\N)$ be their $\ell^\infty$-sum, and let $T:Z\to Z$ be a bounded linear
  operator, with a large diagonal, \ie,
  $$\inf_{k,j} \big|e^*_{(k,j)}(T(e_{(k,j)})\big|>0.$$
  Under which condition does the identity on $Z$ factor through $T$? The purpose of this paper is to
  formulate general conditions for which the answer is positive.
\end{abstract}

\tableofcontents

\makeatletter%
\providecommand\@dotsep{5}%
\def\listtodoname{List of Todos}%
\def\listoftodos{\@starttoc{tdo}\listtodoname}%
\makeatother%

\section{Introduction}\label{S:0}

Throughout this paper, we assume $X_k$ is for each $k\in\N$ a Banach space which has a normalized
basis $(e_{(k,j)})_j$ and let $(e^*_{(k,j)})_j\subset X^*_k$ be the coordinate functionals.  Let $Z$
be the space
\begin{equation}\label{eq:intro:Z-def}
  Z
  =\ell^\infty(X_k:k\in\N)
  =\big\{(x_k) : x_k\in X_k,\,k \kin\N,\, \|(x_k)\|
  =\sup_{k\in\N} \|x_k\|_{X_k} <\infty\big\}.
\end{equation}
We say a bounded linear operator $T\colon Z\to Z$ has \emph{large diagonal}, if
\begin{equation*}
  \inf_{k,j}\big|e^*_{(k,j)}\big(Te_{(k,j)}\big)\big|
  > 0.
\end{equation*}

The main focus of this work is the following problem concerning operators on $Z$.
\begin{prob}\label{prob:1}
  Does the identity operator $I_Z$ on $Z$ factor through every bounded linear operator
  $T\colon Z\to Z$ with a large diagonal, \ie, do there exist bounded linear operators
  $A,B\colon Z\to Z$ such that $I_Z = ATB$?
\end{prob}
If \Cref{prob:1} has a positive answer, we say that $Z$ has the \emph{factorization property} (with
respect to the array $(e_{(k,j)})$).

In our previous work~\cite[Theorem~7.6]{lechner:motakis:mueller:schlumprecht:2018} we solved the
factorization problem for unconditional sums of Banach spaces with bases (e.g.\@ $\ell^p$ or $c_0$
sums).  In that case, an appropriate linear ordering of the array $(e_{(k,j)})$ is a basis of the
unconditional sum.  Since $Z$ is a non-separable Banach space, the array $(e_{(k,j)})$ \emph{cannot}
be reordered into a basis of $Z$.  In particular, we lose the norm convergence of the series
expansion of vectors in $Z$ which are not in in the $c_0$-sum of the $X_k$.  Consequently, the
arguments given in~\cite{lechner:motakis:mueller:schlumprecht:2018} are not applicable to the space
$Z$.

Historically, the first factorization problem of that type appeared in the 1967
paper~\cite{lindenstrauss:1967} by Lindenstrauss, in which he proved that the space $\ell^\infty$ is
prime.  Later in~1982, Capon~\cite{capon:1982:1} actually showed that whenever $X$ has a symmetric
basis, $\ell^\infty(X)$ has the factorization property.  Bourgain proved in his~1983
work~\cite{bourgain:1983} that $H^\infty$ is primary, by solving a factorization problem of
$\ell^\infty$-sums of finite dimensional spaces (Bourgain's localization method).  The first
applications of Bourgain's localization method appear shortly thereafter in works by the third named
author ~\cite{mueller:1988} and by Blower~\cite{blower:1990}.  The cases $X_k=L^p$,
$1 < p < \infty$, $k\in\mathbb{N}$ and $X_k = H^1$, $k\in\mathbb{N}$, were treated by
Wark~\cite{wark:2007:direct-sum} in 2007 and the third named author~\cite{mueller:2012} in 2012,
respectively.  The $\ell^\infty$-sum of mixed-norm Hardy and $\bmo$ spaces and the $\ell^\infty$-sum
of non-separable Banach spaces with a subsymmetric weak$^*$ Schauder bases were recently treated by
the first named author in~\cite{lechner:2018:subsymm,lechner:2018:factor-mixed}.

In our previous paper~\cite{lechner:motakis:mueller:schlumprecht:2018}, we developed an approach to
factorization problems based on two player games; the type of games we are referring to were first
considered by Maurey, Milman, Tomczak-Jaegermann in~\cite{maurey:milman:tomczak-jaegermann:1995} and
further developed by Odell-Schlumprecht~\cite{odell:schlumprecht:2002} and
Rosendal~\cite{rosendal:2009} who coined the term \emph{infinite asymptotic games} (see
also~\cite{odell:schlumprecht:2006,odell:schlumprecht:zsak:2007,haydon:odell:schlumprecht:2011}).
Thereby, we were able to unify the proofs of several known factorization results as well as provide
new ones.  We exploited those infinite asymptotic games to define the concept of \emph{strategically
  reproducible bases} in Banach spaces.

In the present paper, we develop a two player game approach to solve the factorization \Cref{prob:1}
on $Z$ if the array $(e_{(k,j)})_{k,j}$ is \emph{uniformly asymptotically curved}; that is, if for
every $k\in\mathbb{N}$ and every block basis $(x_{(k,j)})_j$ of $(e_{(k,j)})_j$ we have
\begin{equation}\label{eq:uac}
  \lim_n \sup_{k}\Big\|\frac{1}{n}\sum_{j=1}^n x_{(k,j)} \Big\|_{X_k} = 0.
\end{equation}
Our first main \Cref{T:2.7} isolates conditions on the array $(e_{(k,j)})$ which guarantee that
\Cref{prob:1} has a positive solution.  Moreover, if we drop the restriction that the array
$(e_{(k,j)})_{k,j}$ is uniformly asymptotically curved, then we were able to successfully treat the
following
\begin{prob}\label{prob:2}
  Does for every $T\colon Z\to Z$ with large diagonal with respect to $(e_{(k,j)})$ exist an
  infinite $\Gamma\subset\mathbb{N}$ such that the identity on
  $Z_\Gamma:=\ell^\infty( X_k : k\in\Gamma )$ factor through $T$.
\end{prob}
In the special case that $X_k = X$, $k\in\mathbb{N}$, our solution to \Cref{prob:2} implies a
positive solution to \Cref{prob:1}.

\section{Preliminaries}
\label{sec:preliminaries}

In this section, we introduce the necessary notation and concepts.

\subsection{Review of strategically reproducible bases}
\label{sec:revi-strat-repr}

Let $X$ denote a Banach space and $S\subset X$.  We define $[S]$ as the norm-closure of $\spa{S}$,
where $\spa{S}$ denotes the linear span of $S$.  Given sequences $(x_i)$ in $X$ and
$(\widetilde x_i)$ in possibly another Banach space $\widetilde X$.  We say that $(x_i)$ and
$(\widetilde x_i)$ are {\em impartially $C$-equivalent} if for any finite choice of scalars
$(a_i)\in c_{00}$ we have
\begin{equation*}
  \frac{1}{\sqrt C}\Big\|\sum_{i=1}^\infty a_i\widetilde x_i\Big\|
  \leq \Big\|\sum_{i=1}^\infty a_ix_i\Big\|
  \leq \sqrt{C}\Big\|\sum_{i=1}^\infty a_i\widetilde x_i\Big\|.
\end{equation*}
For a Banach space $X$ we denote by $\cof(X)$ the set of cofinite dimensional subspaces of $X$,
while $\cof_{w^*}(X^*)$ denotes the set of cofinite dimensionl $w^*$-closed subspaces of $X^*$.

Let $C > 0$.  Given an operator $T\colon X\to X$, we say that {\em the identity $C$-factors through
  $T$ } if there are bounded linear operators $A,B:X\to X$ with $\|A\|\|B\|\le C$ and $I = ATB$;
moreover, we say that {\em the identity almost $C$-factors through $T$ } if it
$(C+\varepsilon)$-factors through $T$ for all $\varepsilon>0$.  If an operator $T$ on $X$ satisfies
$\inf_j\big|e_j^*(Te_j)\big|>0$, then we say that $T$ has \emph{large diagonal}.  An operator $T$ on
$X$ satisfying $e^*_m(Te_j )= 0$ whenever $k\neq m$, is called a \emph{diagonal operator}.

We recall some definitions from~\cite{lechner:motakis:mueller:schlumprecht:2018}.
\begin{defin}\label{factorization definitions:basis}
  Let $X$ be a Banach space with a normalized Schauder basis $(e_j)$.
  \begin{enumerate}[(i)]
  \item\label{enu:factorization definitions:basis:i} We say that the basis $(e_j)$ has the {\em
      factorization property} if whenever $T\colon X\to X$ is a bounded linear operator with
    $\inf_j|e_j^*(Te_j)|>0$ then the identity of $X$ factors through $T$.  More precisely, for a map
    $K\colon (0,\infty)\to \R^+$ we say that $(e_j)$ has the {\em $K(\cdot)$-factorization property
      in $X$} if for every $\delta>0$ and bounded linear operator $T\colon X\to X$, with
    $\inf_j|e_j^*(Te_j)|\geq\delta$ the identity $I_X$ on $X$ almost $K(\delta)$-factors through
    $T$, \ie, for every $\vp>0$ there are operators $A,B\colon X\to X$, with
    $\|A\|\|B\|\le K(\delta)+\vp$ and $I_X=BTA$.
  \item\label{enu:factorization definitions:basis:ii} We say that the basis $(e_j)$ has the {\em
      uniform diagonal factorization property in $X$} if for every $\delta>0$ there exists
    $K(\delta)> 0$ so that for every bounded diagonal operator $T\colon X\to X$ with
    $\inf_j \big|e_j^*(Te_j)\big| \geq \delta$ the identity almost $K(\delta)$-factors through $T$.
    If we wish to be more specific we shall say that $(e_j)$ has the {\em $K(\delta)$-diagonal
      factorization property}.
  \end{enumerate}
\end{defin}

\begin{remark}\label{rem:fact-prop}
  First, we remark that if $(e_j)$ is unconditional, then it satisfies \Cref{factorization
    definitions:basis}~\eqref{enu:factorization definitions:basis:ii}.  Secondly, recall that
  by~\cite[Remark~3.11]{lechner:motakis:mueller:schlumprecht:2018} we have
  $1/\delta\leq K(\delta)\leq K(1)/\delta$.
\end{remark}

Also recall the following definition of the {\em strategic
  reproducibility}~\cite{lechner:motakis:mueller:schlumprecht:2018} of a Banach space $X$ with a
basis $(e_j)$.

\begin{defin}\label{D:2.1}
  Let $X$ be a Banach space with a normalized Schauder basis $(e_j)$ and fix positive constants
  $C\geq 1$, and $\eta>0$.

  Consider the following two-player game between Player I and Player II:

  \begin{description}
  \item[Pregame] Before the first turn Player I is allowed to choose at the beginning of the game a
    partition of $\N = N_1\cup N_2$.
  \item[Turn $n$, Step\! 1] Player I chooses $\eta_n>0$, $W_n\in\mathrm{cof}(X)$, and
    $G_n\in\mathrm{cof}_{w^*}(X^*)$,
  \item[Turn $n$, Step\! 2] Player II chooses $i_n\in\{1,2\}$, a finite subset $E_n$ of $N_{i_n}$
    and sequences of non-negative real numbers $(\lambda_i^{(n)})_{i\in E_n}$,
    $(\mu_i^{(n)})_{i\in E_n}$ satisfying
    \begin{equation*}
      1-\eta <\sum_{i\in E_n}\lambda_i^{(n)}\mu_i^{(n)}< 1+\eta.
    \end{equation*}
  \item[Turn $n$, Step\! 3] Player I chooses $(\vp_i^{(n)})_{i\in E_n}$ in $\{-1,1\}^{E_n}$.
  \end{description}
  We say that Player II has a winning strategy in the game $\mathrm{Rep}_{(X,(e_i))}(C,\eta)$ if he
  can force the following properties on the result:

  For all $j\in\N$ we set
  \begin{equation*}
    x_j
    = \sum_{i\in E_j}\vp_i^{(n)} \lambda^{(n)}_ie_i \text{ and }x_j^*
    = \sum_{i\in E_j}\vp_i^{(n)}\mu^{(n)}_i e^*_i
  \end{equation*}
  and demand:
  \begin{enumerate}[(i)]
  \item\label{enu:strat-rep:i} the sequences $(x_j)$ and $(e_j)$ are impartially
    $(C+\eta)$-equivalent,
  \item\label{enu:strat-rep:ii} the sequences $(x_j^*)$ and $(e_j^*)$ are impartially
    $(C+\eta)$-equivalent,
  \item\label{enu:strat-rep:iii} for all $j\in\N$ we have $\mathrm{dist}(x_j, W_j) < \eta_j$, and
  \item\label{enu:strat-rep:iv} for all $j\in\N$ we have $\mathrm{dist}(x^*_j, G_j) < \eta_j$.
  \end{enumerate}
  We say that $(e_j)$ is {\em $C$-strategically reproducible in $X$} if for every $\eta >0$ Player
  II has a winning strategy in the game $\mathrm{Rep}_{(X,(e_j))}(C,\eta)$.
\end{defin}
It was shown in~\cite[Remark~3.5]{lechner:motakis:mueller:schlumprecht:2018} that in the case that
$(e_j)$ is shrinking and unconditional, then \Cref{D:2.1} is equivalent to a considerably simpler
formulation.

\Cref{D:2.1} was used in \cite{lechner:motakis:mueller:schlumprecht:2018} to prove the following
factorization result:
\begin{thm}[{\cite[Theorem~3.12]{lechner:motakis:mueller:schlumprecht:2018}}]\label{T:2.3}
  Let $X$ be a Banach space with a Schauder basis $(e_j)$ that has a basis constant $\lambda$.
  Assume also that
  \begin{enumerate}[(i)]
  \item the basis $(e_j)$ has the $K(\delta)$-diagonal factorization property and
  \item the basis $(e_j)$ is $C$-strategically reproducible in $X$.
  \end{enumerate}
  Then $(e_j)$ has the $ \lambda C^{2}K(\delta)$-factorization property.
\end{thm}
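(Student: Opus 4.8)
The plan is to use the $C$-strategic reproducibility of $(e_j)$ to make an operator $T$ with large diagonal look, after a change of variables, like a small perturbation of a diagonal operator, and then invoke the $K(\delta)$-diagonal factorization property. So: fix $\delta,\varepsilon>0$ and $T\colon X\to X$ with $\inf_j|e_j^*(Te_j)|\ge\delta$; I want $A,B\colon X\to X$ with $\|A\|\,\|B\|\le\lambda C^2K(\delta)+\varepsilon$ and $I_X=BTA$. Fix a small parameter $\eta>0$, to be chosen at the end in terms of $\delta,\varepsilon,\lambda,C,\|T\|$ and $K(1)$. Since $(e_j)$ is $C$-strategically reproducible, Player~II has a winning strategy in $\mathrm{Rep}_{(X,(e_j))}(C,\eta)$, so I may let Player~I run \emph{any} strategy and still obtain a play satisfying conditions (i)--(iv) of \Cref{D:2.1}. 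I would have Player~I run a strategy tuned to $T$: in the pregame, partition $\N=N_1\cup N_2$ with $N_1=\{j:e_j^*(Te_j)>0\}$ and $N_2=\{j:e_j^*(Te_j)<0\}$ (one of these is infinite); at Step~1 of turn $n$, having seen $x_1,\dots,x_{n-1}$, $x_1^*,\dots,x_{n-1}^*$ and hence also $Tx_1,\dots,Tx_{n-1}$, set $\eta_n=\eta\,4^{-n}$ and choose a cofinite $W_n\subseteq\bigcap_{m<n}\ker(x_m^*\circ T)$ that in addition forces $\min E_n$ to be large, together with a cofinite $w^*$-closed $G_n\subseteq\bigcap_{m<n}(Tx_m)^\perp$; at Step~3, after Player~II has committed $E_n\subseteq N_{i_n}$ and the nonnegative scalars $\lambda^{(n)}_i,\mu^{(n)}_i$, choose the signs $\varepsilon^{(n)}_i\in\{-1,1\}$ so that $|\langle x_n^*,Tx_n\rangle|$ is as large as possible.

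Next I would record what this strategy forces. The subspace demands, together with the distance conditions (iii)--(iv) and the freedom of Player~II to take $E_n$ inside a cofinite subset of $N_{i_n}$, make the blocks $(E_n)$ successive --- so $\langle x_m^*,x_n\rangle=\delta_{mn}\alpha_n$ with $\alpha_n:=\sum_{i\in E_n}\lambda^{(n)}_i\mu^{(n)}_i\in(1-\eta,1+\eta)$ --- and make the off-diagonal numbers $\langle x_m^*,Tx_n\rangle$, $m\ne n$, negligible, with $\sum_m\sum_{n\ne m}|\langle x_m^*,Tx_n\rangle|$ as small as desired. The heart of the matter is the diagonal term $\langle x_n^*,Tx_n\rangle=\sum_{i,i'\in E_n}\varepsilon^{(n)}_i\varepsilon^{(n)}_{i'}\,\mu^{(n)}_{i'}\lambda^{(n)}_i\,\langle e_{i'}^*,Te_i\rangle$. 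Averaging over all sign patterns, $\E_\varepsilon[\langle x_n^*,Tx_n\rangle]=\sum_{i\in E_n}\lambda^{(n)}_i\mu^{(n)}_i e_i^*(Te_i)$, and since $E_n$ lies entirely inside $N_1$ or entirely inside $N_2$, all numbers $e_i^*(Te_i)$, $i\in E_n$, share a sign, so this average has absolute value $\sum_{i\in E_n}\lambda^{(n)}_i\mu^{(n)}_i|e_i^*(Te_i)|\ge\delta\alpha_n\ge\delta(1-\eta)$; hence Player~I's extremal sign choice (maximising when $E_n\subseteq N_1$, minimising when $E_n\subseteq N_2$) yields $|\langle x_n^*,Tx_n\rangle|\ge\delta(1-\eta)$. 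Thus it is the pregame partition (which pins down the sign of the average) combined with Player~I's sign move (which beats that average) that keeps the diagonal of $T$ large after the change of variables; I expect this two-step device to be the central idea, with the verification that Player~II really can meet all of Player~I's simultaneous demands being the most tedious part.

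Then I would assemble the factorization. Set $R\colon X\to X$, $Re_j=x_j$; by condition~(i) this is bounded. Using condition~(ii), the biorthogonality $\langle x_m^*,x_n\rangle=\delta_{mn}\alpha_n$, and the basis constant $\lambda$ (to control partial sums of the $e_j^*$), produce a bounded $A_0\colon X\to X$ with $A_0x_n=\alpha_n e_n$, agreeing on $[x_j]$ with $v\mapsto\sum_n\langle x_n^*,v\rangle e_n$. \emph{This is the step I expect to be the main obstacle}: since $(e_j)$ is only assumed to be a Schauder basis (not shrinking), the series $\sum_n\langle x_n^*,v\rangle e_n$ need not converge for $v\notin[x_j]$, and $[x_j]$ need not be complemented, so $A_0$ has to be obtained as a weak-operator cluster point of the uniformly bounded finite read-offs $v\mapsto\sum_{n\le N}\langle x_n^*,v\rangle e_n$ --- possibly after reducing to a separable situation or passing through the bidual --- together with a check that $A_0TR$ is a genuine bounded operator on $X$. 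Granting this, Paragraph~2 gives $A_0TRe_n=A_0(Tx_n)=\langle x_n^*,Tx_n\rangle e_n+z_n$ with $\delta(1-\eta)\le|\langle x_n^*,Tx_n\rangle|$, the quantity $|\langle x_n^*,Tx_n\rangle|$ bounded above in terms of $\lambda,C,\|T\|$, and $\sum_n\|z_n\|$ small; hence $A_0TR=D+N$, where $D$ is the diagonal operator $De_n=\langle x_n^*,Tx_n\rangle e_n$ --- bounded, being the difference of the two bounded operators $A_0TR$ and $N$ --- with $\inf_n|e_n^*(De_n)|\ge\delta(1-\eta)$, and $\|N\|$ as small as we please once $\eta$ is small.

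Finally, the $K(\delta)$-diagonal factorization property applied to $D$ yields $P,Q\colon X\to X$ with $\|P\|\,\|Q\|<K(\delta(1-\eta))+\eta$ and $I_X=PDQ$; then $P(A_0TR)Q=I_X+PNQ$ with $\|PNQ\|<1$, so $I_X+PNQ$ is invertible with inverse of norm tending to $1$ as $\eta\to0$, and
\[
  I_X=\big((I_X+PNQ)^{-1}PA_0\big)\,T\,(RQ)=BTA.
\]
Collecting the constants --- from the two impartial $(C+\eta)$-equivalences in (i) and (ii), from the basis constant $\lambda$ incurred in handling the biorthogonal functionals, from the diagonal factorization (with $K(\delta(1-\eta))\to K(\delta)$ as $\eta\to0$ by \Cref{rem:fact-prop}), and from the inversion of $I_X+PNQ$ --- and letting $\eta\to0$, the bound comes out at most $\lambda C^2K(\delta)+\varepsilon$, which is the assertion of \Cref{T:2.3}.
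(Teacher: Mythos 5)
Your proposal follows essentially the same route as the proof of this result in the cited paper, which is also the argument reused in the $\ell^\infty$-sum setting here (\Cref{prop:main} together with the proof of \Cref{T:2.5}): pregame sign partition, cofinite subspace demands $W_n,G_n$ that annihilate the previously produced vectors and their images under $T$ (thereby killing off-diagonal entries and forcing a block structure), a probabilistic sign move at Step~3 keeping the diagonal $\ge\delta(1-\eta)$, passage to the operators $Ae_j=x_j$ and $Bv=\sum_j x_j^*(v)e_j$ so that $BTA$ is a diagonal operator plus a small perturbation, and finally the diagonal factorization property followed by a Neumann-series inversion. The one point you flag as a possible obstacle---the boundedness of your $A_0$---is in fact not one: the distance conditions~(iii)--(iv) in \Cref{D:2.1} with Player~I's choices force $(x_j^*)$ to be (a summable perturbation of) a block basic sequence of $(e_j^*)$, and for such a sequence that is impartially $(C+\eta)$-equivalent to $(e_j^*)$ the read-off map $v\mapsto\sum_n x_n^*(v)e_n$ is automatically bounded by $\lambda(C+\eta)$ using only the basis constant of $(e_j)$ (cf.\ \Cref{ntn:basic-operators} and \Cref{remark:basic-operators}); neither shrinkingness nor complementation of $[x_j]$ is needed.
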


\subsection{Dyadic Hardy spaces and $\bmo$}
\label{sec:review-mixed-norm}

We now turn to defining the dyadic Hardy spaces, $\bmo$ and $\vmo$.

For a more in depth discussion of the biparameter Hardy spaces, we refer
to~\cite{laustsen:lechner:mueller:2015}; see also~\cite{lechner:motakis:mueller:schlumprecht:2018}.
Let $\mathcal{D}$ denote the collection of dyadic intervals given by
\begin{equation*}
  \mathcal{D}
  = \{[k2^{-n},(k+1)2^{-n})\, :\, n,k\in \mathbb N_0, 0\leq k \leq 2^n-1\}.
\end{equation*}
For $I\in \mathcal{D}$ we let $|I|$ denote the length of the dyadic interval $I$.  Let~$h_I$ be the
$L^\infty$-normalized Haar function supported on $I\in\mathcal{D}$; that is, for $I = [a,b)$ and
$c = (a+b)/2$, we have $h_I(x) = 1$ if $a\le x <c$, $h_I(x) = -1$ if $c\leq x < b$, and $h_I(x) = 0$
otherwise.  For $1\leq p < \infty$, the \emph{Hardy space} $H^p$ is the completion of
\begin{equation*}
  \spa\{ h_{I}\, :\, I\in \mathcal{D} \}
\end{equation*}
under the square function norm
\begin{equation}\label{Hpnorm}
  \Bigl\|\sum_{I\in\mathcal{D}} a_{I} h_{I}\Bigr\|_{H^p}
  = \bigg(
  \int_0^1 \Big(
  \sum_{I\in\mathcal{D}} a_{I}^2 h_{I}^2(x)
  \Big)^{p/2}
  d x
  \bigg)^{1/p}.
\end{equation}
The Haar system $(h_{I})_{I\in\mathcal{D}}$ is a $1$-unconditional basis of~$H^p$, and thus gives
rise to a canonical lattice structure.  Finally, we define $\vmo$ as the norm closure of
$(h_I)_{I\in\mathcal{D}}$ in $(H^1)^*$, where we canonically identify $h_I$ with the linear
functional $f\mapsto \int h_I(x) f(x) dx$.

Next, let $X$ denote any Banach space.  We will now define the vector-valued Banach spaces $H^p[X]$,
$1\leq p < \infty$, $\bmo[X]$ and $\vmo[X]$.  Put
$\mathcal{D}_n = \{I\in\mathcal{D} : |I|=2^{-n}\}$,
$\mathcal{D}^n = \{I\in\mathcal{D} : |I|\geq 2^{-n}\}$, $n\geq 0$ and let $(r_I)$ denote a sequence
of independent Rademacher functions.  We define
\begin{equation}\label{eq:Hp-vector}
  H^p[X]
  = \Bigl\{ f\in L^1(X) : \|f\|_{H^p[X]} < \infty\Bigr\},
\end{equation}
where for every $f = \sum_{I\in\mathcal{D}} f_I h_I\in L^1(X)$, $f_I\in X$, $I\in\mathcal{D}$, the
norm is given by
\begin{equation*}
  \|f\|_{H^p[X]}
  = \int_0^1 \Bigl\|\sum_{I\in\mathcal{D}} r_I(t)f_I h_I\Bigr\|_{L^p(X)} dt.
\end{equation*}
Of special interest for us is the case $p=1$.  M\"uller and Schechtman observed that Davis'
inequality holds for Banach spaces with the $\umd$
property~\cite[Theorem~6]{mueller:schechtman:1991}, \ie, there exists a constant $C>0$ depending
only on the $\umd$-constant of the Banach space $X$ such that
\begin{equation*}
  C^{-1} \|f\|_{H^1[X]}
  \leq \int_0^1 \sup_{n} \|\cond_n(f)\|_X dt
  \leq C \|f\|_{H^1[X]},
\end{equation*}
where $\cond_n$ denotes the conditional expectation with respect to $\mathcal{D}_n$.  For a detailed
presentation of $\umd$ spaces, we refer to Pisier's recent monograph~\cite{pisier:2016:martingales}.

We now define $\bmo[X]$:
\begin{equation*}
  \bmo[X]
  = \Bigl\{ f\in L^1(X) : \|f\|_{\bmo[X]} < \infty\Bigr\},
\end{equation*}
where for every $f = \sum_{I\in\mathcal{D}} f_I h_I\in L^1(X)$, $f_I\in X$, $I\in\mathcal{D}$, the
norm is given by
\begin{equation*}
  \|f\|_{\bmo[X]}^2
  = \sup_{I\in\mathcal{D}} \frac{1}{|I|} \int_I \Bigl\| \sum_{J\subset I} f_J h_J(x)\Bigr\|_X^2 dx.
\end{equation*}
Taking Davis' inequality into account, we observe that for $\umd$ spaces $X$
Bourgain~\cite[Theorem~12]{bourgain:1983:vector-valued} proved that the dual of $H^1[X]$ is
$\bmo[X^*]$.  Finally, we define $\vmo[X]$ as the norm closure of
$\spa\{x_I h_I : x_I\in X,\ I\in\mathcal{D}\}$ in $\bmo[X]$.

Moreover, let $\mathcal{R} = \{I\times J: I,J\in\mathcal{D}\}$ be the collection of dyadic
rectangles contained in the unit square, and set
\begin{equation*}
  h_{I, J}(x,y) = h_I(x)h_J(y),\qquad I\times J\in\mathcal{R},\, x,y\in[0,1).
\end{equation*}

For $1\leq p,q < \infty$, the \emph{mixed-norm Hardy space} $H^p(H^q)$ is the completion of
\begin{equation*}
  \spa\{ h_{I, J}\, :\, I\times J \in \mathcal{R} \}
\end{equation*}
under the square function norm
\begin{equation}\label{HpHqnorm}
  \|f\|_{H^p(H^q)}
  = \bigg(
  \int_0^1 \Big(
  \int_0^1 \big(
  \sum_{I, J} a_{I, J}^2 h_{I, J}^2(x,y)
  \big)^{q/2}
  d y
  \Big)^{p/q}
  d x
  \bigg)^{1/p}
  ,
\end{equation}
where $f = \sum_{I, J} a_{I, J} h_{I, J}$.  The system $(h_{I, J})_{I\times J\in\mathcal{R}}$ is a
$1$-unconditional basis of~$H^p(H^q)$, called the \emph{bi-parameter Haar system}.  Note that in
view of the Khinchin-Kahane inequality, the norms in the spaces $H^p(H^q)$ and $H^p[H^q]$ are
equivalent for $1\leq p,q < \infty$; to be precise, the identity operator
$J\colon H^p(H^q)\to H^p[H^q]$ satisfies
\begin{equation*}
  \|J\|\cdot \|J^{-1}\|\leq C(p,q).
\end{equation*}
We refer to~\cite[Theorem~4, p.20]{kahane:1985}.

First note that $H^p$, $1 < p < \infty$ is a $\umd$ space; the $\umd$ constant depends only on $p$.
We will now recall that the dual of $\vmo(H^p)$, $1 < p < \infty$ is $H^1(H^{p'})$, where
$p' = \frac{p}{p-1}$.
\begin{thm}\label{thm:dual-vmo(Hp)}
  Let $1 < p < \infty$, define $p' = \frac{p}{p-1}$ and $J\colon H^1(H^{p'})\to (\vmo(H^p))^*$ by
  $f\mapsto (g\mapsto \langle f, g\rangle)$.  Then $J$ is an isomorphism with
  $\|J\|\cdot \|J^{-1}\|\leq C(p)$; hence, $\bigl(\vmo(H^p)\bigr)^* = H^1(H^{p'})$.
\end{thm}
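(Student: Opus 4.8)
The plan is to identify $(\vmo(H^p))^*$ with $H^1(H^{p'})$ by following the scalar template: first show that every $f\in H^1(H^{p'})$ induces a bounded functional on $\vmo(H^p)$ via the Haar pairing $g\mapsto\langle f,g\rangle$, with norm comparable to $\|f\|_{H^1(H^{p'})}$; then show that every bounded functional on $\vmo(H^p)$ arises this way. The bi-parameter Haar system $(h_{I,J})$ is a $1$-unconditional basis of both $H^p(H^q)$-type spaces, and $\vmo(H^p)$ is by definition the closed span of $\{x_{I,J}h_{I,J}\}$ inside the appropriate $\bmo$-type space, so a functional on $\vmo(H^p)$ is determined by its values on the finitely-supported Haar polynomials. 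Thus the whole statement reduces to a uniform two-sided estimate, valid on Haar polynomials $f=\sum a_{I,J}h_{I,J}$ and $g=\sum b_{I,J}h_{I,J}$, of the form
\begin{equation*}
  \frac{1}{C(p)}\,\|f\|_{H^1(H^{p'})}\,\|g\|_{\vmo(H^p)}
  \;\le\; \Big|\sum_{I\times J} a_{I,J}b_{I,J}\,|I|\,|J|\Big|
  \quad\text{and}\quad
  \Big|\langle f,g\rangle\Big|\le C(p)\,\|f\|_{H^1(H^{p'})}\,\|g\|_{H^p(H^q)}.
\end{equation*}

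The natural route is to transfer the known duality for the $L^1(X)$-type spaces rather than to redo the real-variable theory. First I would recall that $H^p$, $1<p<\infty$, is a $\umd$ space with $\umd$-constant depending only on $p$, so by Bourgain's theorem quoted in the excerpt, $(H^1[X])^*=\bmo[X^*]$ for any $\umd$ space $X$. Taking $X=H^p$ gives $(H^1[H^p])^*=\bmo[(H^p)^*]=\bmo[H^{p'}]$. Next I would invoke the Khinchin–Kahane comparison from the excerpt, $\|J\|\|J^{-1}\|\le C(p,q)$, relating $H^p(H^q)$ with $H^p[H^q]$ and $\bmo(H^q)$ with $\bmo[H^q]$, to replace the bracket spaces $H^1[H^{p'}]$, $\bmo[H^{p'}]$ by the square-function spaces $H^1(H^{p'})$, $\bmo(H^{p'})$ throughout, at the cost of a constant depending only on $p$. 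Finally, passing from $\bmo$ to its subspace $\vmo$: since $\vmo(H^p)$ is the closed span of the Haar polynomials in $\bmo(H^p)$, a predual-style argument shows $(\vmo(H^p))^*$ is the whole of the dual that one gets by pairing with $\bmo(H^p)$'s predual — i.e.\ $(\vmo(H^p))^*=H^1(H^{p'})$ — exactly as $(\vmo)^*=H^1$ in the scalar one-parameter case. Assembling these identifications produces the isomorphism $J$ with $\|J\|\|J^{-1}\|\le C(p)$.

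The main obstacle is the bookkeeping in the last transfer step: one has to be careful that $\vmo(H^p)$, defined via the $\bmo[\cdot]$-norm with a square function inside, genuinely has $H^1(H^{p'})$ as its dual and not merely as a norming subspace, i.e.\ that every bounded functional on $\vmo(H^p)$ extends to a weak$^*$-continuous functional on $\bmo(H^p)$ represented by an $H^1(H^{p'})$-function. This is the place where separability of $\vmo(H^p)$ and the $1$-unconditionality of the Haar basis are used: a bounded functional $\varphi$ on $\vmo(H^p)$ has coefficients $\varphi(h_{I,J})=:a_{I,J}|I||J|$, and one must show the formal series $f=\sum a_{I,J}h_{I,J}$ converges in $H^1(H^{p'})$ with norm $\le C(p)\|\varphi\|$; this follows by testing $\varphi$ against finitely-supported Haar polynomials in $\vmo(H^p)$ that almost norm $f$ in $H^1(H^{p'})$, which exist precisely because the Haar polynomials are dense in $\vmo(H^p)$ and because the duality bracket realizing the $H^1(H^{p'})$-norm only involves such polynomials. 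Once this weak$^*$-density/extension point is settled, the norm bounds are immediate from the chain of isomorphisms above, and the constant $C(p)$ is the product of the $\umd$-constant of $H^p$, Bourgain's duality constant, and the two Khinchin–Kahane constants $C(1,p')$ and $C(p,?)$ — all depending only on $p$.
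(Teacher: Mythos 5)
Your overall strategy is sound and close to the paper's: both use Bourgain's vector-valued duality to get the boundedness of $J$, and both handle the passage from $\bmo$ to $\vmo$ by truncating with the conditional expectations $\cond_n$ and exploiting that $\langle \cond_n h, g\rangle = \langle \cond_n h, \cond_n g\rangle$ with $\|\cond_n g\|_{\bmo(H^p)}\leq\|g\|_{\bmo(H^p)}$, so the dual norm on finite Haar polynomials is already realized by testing against finite Haar polynomials. Where you diverge is on the lower bound $\|Jf\|\geq c(p)\|f\|_{H^1(H^{p'})}$: the paper proves it directly by a randomization argument, averaging $f_t=\sum r_I(t)f_Ih_I$ to pass from the $H^1[H^{p'}]$ norm to an $L^1(H^{p'})$ norm, choosing an $L^\infty(H^p)$ norming element $g$, and then recovering a $\bmo(H^p)$ element $h=\sum r_I(t_0)g_Ih_I$ of controlled norm with $\langle f,h\rangle$ large (using the $\umd$ property of $H^p$ at the last step); you instead fold the lower bound into the surjectivity argument. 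This is logically legitimate (apply the surjectivity estimate to $L=Jf$ and recover $f$ itself), and is actually a mild economy, but it makes the surjectivity step carry more weight and therefore demands more care there.

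Three points you should tighten. First, the displayed ``target inequality'' $\frac{1}{C(p)}\|f\|_{H^1(H^{p'})}\|g\|_{\vmo(H^p)}\leq |\langle f,g\rangle|$ cannot hold for all pairs $(f,g)$ (take orthogonal $f,g$); you mean a supremum over $g$ in the unit ball, and it is worth stating that precisely. Second, in invoking Bourgain's theorem $(H^1[X])^*=\bmo[X^*]$ you take $X=H^p$ and get $(H^1[H^p])^*=\bmo[H^{p'}]$, but the duality you actually need is $(H^1[H^{p'}])^*=\bmo[H^p]$, i.e.\ $X=H^{p'}$; as written the identification runs the wrong way. Third — and this is the only real gap — to conclude that the formal series $h=\sum\frac{L(h_{I,J})}{|I\times J|}h_{I,J}$ with $\sup_n\|\cond_n h\|_{H^1(H^{p'})}\leq C(p)\|L\|$ is actually an element of $H^1(H^{p'})$ you need the Haar basis of $H^1(H^{p'})$ to be boundedly complete; $1$-unconditionality and separability of $\vmo(H^p)$ alone do not give norm convergence of the partial sums. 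The paper supplies this via \Cref{lem:asymtotically-curved:1} (which yields an upper estimate precluding $c_0$) and James's theorem, as spelled out in \Cref{remark:James}. Without this ingredient the ``weak$^*$-density/extension point'' you flag as the main obstacle remains open.
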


\begin{proof}
  Define $J\colon H^1(H^{p'})\to (\vmo(H^p))^*$ by $f\mapsto (g\mapsto \langle f, g\rangle)$.
  First, we observe that by Bourgain's vector-valued version of Fefferman's
  inequality~\cite{bourgain:1983:vector-valued}, we know that $\|J\|\leq C(p)$.  Secondly, let
  $f\in H^1(H^{p'})$ be given as the finite linear combination $f = \sum_{I\in\mathcal{D}} f_I h_I$,
  $f_I\in H^{p'}$.  Define the family of functions $f_t = \sum_{I\in\mathcal{D}} r_I(t) f_I h_I$,
  where the $(r_I)$ are independent Rademacher functions.  Since
  \begin{equation*}
    \int_0^1 \|f_t\|_{L^1(H^{p'})} dt
    \geq c(p) \|f\|_{H^1(H^{p'})},
  \end{equation*}
  we find a $t_0\in [0,1]$ such that
  \begin{equation*}
    \|f_{t_0}\|_{L^1(H^{p'})}
    \geq c(p) \|f\|_{H^1(H^{p'})}.
  \end{equation*}
  Next, we choose $g\in L^\infty(H^p)$ with $\|g\|_{L^\infty(H^p)} = 1$ such that
  $\langle f_{t_0}, g\rangle = \|f_{t_0}\|_{L^1(H^{p'})}$.  Since, $f$ is a finite linear
  combination of $h_I$'s, so is $g$, and we write $g = \sum_{I\in\mathcal{D}} g_I h_I$.  Now, we
  define $h = \sum_{I\in\mathcal{D}} r_I(t_0) g_I h_I$ and note
  \begin{equation*}
    \langle f, h\rangle
    = \langle f_{t_0}, g\rangle
    = \|f_{t_0}\|_{L^1(H^{p'})}
    \geq c(p) \|f\|_{H^1(H^{p'})}.
  \end{equation*}
  Taking into account that $H^p$, $1 < p < \infty$ is a $\umd$ space, we observe
  \begin{equation*}
    \|h\|_{\bmo(H^p)}
    \leq C(p) \|g\|_{\bmo(H^p)}
    \leq 4 C(p) \|g\|_{L^\infty(H^p)}
    = 4 C(p).
  \end{equation*}
  We summarize the above calculation
  \begin{equation*}
    \|J f\|_{(\vmo(H^p))^*}
    \geq c(p) \|f\|_{H^1(H^{p'})},
    \qquad f\in H^1(H^{p'}).
  \end{equation*}

  Finally, let $L\colon \vmo(H^p)\to \mathbb{R}$ denote any bounded linear functional.  Define the
  conditional expectation $\cond_n$ by
  \begin{equation*}
    \cond_n(\sum_{I,J\in\mathcal{D}} a_{I,J} h_{I,J})
    = \sum_{I,J\in\mathcal{D}^{n-1}} a_{I,J} h_{I,J}
  \end{equation*}
  and note that $\cond_n$ is a contraction on $\bmo(H^p)$.  Next, we now define
  $h = \sum_{I,J\in\mathcal{D}} \frac{L(h_{I, J})}{|I\times J|} h_{I, J}$ and calculate
  \begin{align*}
    \|h\|_{H^1(H^{p'})}
    &= \sup_n \|\cond_{n}(h)\|_{H^1(H^{p'})}
      \leq C(p) \sup_n \sup \bigl\{\langle \cond_{n}(h), g\rangle : \|g\|_{\bmo(H^p)}\leq 1\bigr\}\\
    &= C(p)\sup_n \sup \Bigl\{
      \sum_{I,J\in\mathcal{D}^n} g_{I, J} L(h_{I, J})
      : \|g\|_{\bmo(H^p)}\leq 1
      \Bigr\}\\
    &= C(p)\sup_n \sup \Bigl\{ L\bigl( \cond_{n}(g) \bigr) : \|g\|_{\bmo(H^p)}\leq 1 \Bigr\}
      \leq C(p)\|L\|.
  \end{align*}
  It follows that $L(f) = (J h)(f)$, $f\in \vmo(H^p)$.
\end{proof}

\begin{remark}\label{remark:James}
  Later, in \Cref{lem:asymtotically-curved:1}, we will show that $H^1(H^{p'})$ does not contain
  $c_0$.  This observation allows us to give another proof of \Cref{thm:dual-vmo(Hp)}, which we will
  discuss below.  Since $H^1(H^{p'})$ has a 1-unconditional basis and it does not contain $c_0$, we
  obtain by James’ characterization~\cite[Lemma~1]{james:1950} (see
  also~\cite[Theorem~1.c.10]{lindenstrauss:tzafriri:1977}) that the biparameter Haar basis of
  $H^1(H^{p'})$ is boundedly complete.  By 1-unconditionality $H^1(H^{p'})$ is isometrically
  isomorphic to the dual of the $(H^1(H^{p'}))^*$-norm-closed linear span of
  $\{h_{I,J} : I,J\in\mathcal{D}\}$ in
  $(H^1(H^{p'}))^*$~\cite[Theorem~1.b.4]{lindenstrauss:tzafriri:1977}.  Hence, $H^1(H^{p'})$ is
  isomorphic to the dual of $\vmo(H^p) = [h_{I,J} : I,J\in\mathcal{D}]\subset \bmo(H^p)$ and the
  isomorphism constant between depends just on the isomorphism constant between $H^1(H^{p’})^*$ and
  $\bmo(H^p)$.
\end{remark}

\begin{prop}\label{proposition: predual}
  Let $X$ denote a Banach space with a normalized shrinking basis $(e_j)$ and assume that
  $(e_j^*/\|e_j^*\|)$ is $C$-strategically reproducible in $X^*$.  Then $(e_j)$ is $C$-strategically
  reproducible in $X$.
\end{prop}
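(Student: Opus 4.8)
Fix $\eta>0$; the goal is to produce a winning strategy for Player~II in $\mathrm{Rep}_{(X,(e_j))}(C,\eta)$. Put $f_j:=e_j^*/\|e_j^*\|$. Since $(e_j)$ is shrinking, $(f_j)$ is a normalized Schauder basis of $X^*$, and, identifying $X$ with its canonical image in $X^{**}$, the coordinate functionals of $(f_j)$ are $f_j^*=\|e_j^*\|\,e_j\in X$; moreover $1\le\|e_j^*\|\le 2\lambda$, where $\lambda$ is the basis constant of $(e_j)$. The plan is to have Player~II run, in the background, an auxiliary play of $\mathrm{Rep}_{(X^*,(f_j))}(C,\eta)$ against a fixed winning strategy of the auxiliary Player~II (which exists by hypothesis), and convert the outputs back. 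The one thing to watch is that a naive move-for-move transfer would return, as the $n$-th reproduced vector of $X$, the auxiliary vector $y_n^*=\sum_{i\in E_n}\vp_i^{(n)}\mu_i^{(n)}\|e_i^*\|\,e_i$, which is equivalent only to the re-weighted system $(\|e_n^*\|e_n)$; for a merely conditional basis this need not be equivalent to $(e_n)$. (No such problem occurs when $(e_j)$ is $1$-unconditional, the case of the Haar-type bases in the applications, since then $\|e_j^*\|=1$.) The remedy is to additionally rescale the $n$-th output vector by $\|e_n^*\|^{-1}$ and the $n$-th output functional by $\|e_n^*\|$; this costs nothing because $\|e_n^*\|$ is bounded above and below, and the two rescalings cancel in the product $\sum\tilde\lambda\tilde\mu$.

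In detail, the strategy I would use for Player~II is the following. When Player~I declares $\N=N_1\cup N_2$, forward this partition to the auxiliary game. On turn $n$, given Player~I's move $(\eta_n,W_n,G_n)$ with $W_n\in\cof(X)$ and $G_n\in\cof_{w^*}(X^*)$, let the auxiliary Player~I play $\eta_n':=\eta_n/(2\lambda)$, $W_n':=G_n$ (which is cofinite in $X^*$), and $G_n':=W_n^{\perp\perp}$, the annihilator in $X^{**}$ of $W_n^\perp\subset X^*$ (a $w^*$-closed cofinite subspace of $X^{**}$). The auxiliary strategy answers with $i_n\in\{1,2\}$, a finite $E_n\subset N_{i_n}$, and non-negative reals $(\lambda_i^{(n)})_{i\in E_n}$, $(\mu_i^{(n)})_{i\in E_n}$ with $\sum_{i\in E_n}\lambda_i^{(n)}\mu_i^{(n)}\in(1-\eta,1+\eta)$. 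Player~II then plays the same $i_n$ and $E_n$ together with
\[
  \tilde\lambda_i^{(n)}:=\frac{\|e_i^*\|}{\|e_n^*\|}\,\mu_i^{(n)},\qquad
  \tilde\mu_i^{(n)}:=\frac{\|e_n^*\|}{\|e_i^*\|}\,\lambda_i^{(n)}\qquad(i\in E_n),
\]
which are non-negative and satisfy $\tilde\lambda_i^{(n)}\tilde\mu_i^{(n)}=\lambda_i^{(n)}\mu_i^{(n)}$, so $\sum_{i\in E_n}\tilde\lambda_i^{(n)}\tilde\mu_i^{(n)}\in(1-\eta,1+\eta)$. Finally, forward Player~I's signs $(\vp_i^{(n)})$ unchanged to the auxiliary game.

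It then remains to verify that this wins. Writing $(y_n),(y_n^*)$ for the vectors produced by the auxiliary game and $(x_n),(x_n^*)$ for those produced on $X$, unwinding the definitions gives $x_n=\|e_n^*\|^{-1}y_n^*$ and $x_n^*=\|e_n^*\|\,y_n$. Consequently $\|\sum_n c_nx_n\|=\|\sum_n(c_n/\|e_n^*\|)\,y_n^*\|$ and $\|\sum_n c_nx_n^*\|=\|\sum_n(c_n\|e_n^*\|)\,y_n\|$ for all scalars $(c_n)$; feeding $(c_n/\|e_n^*\|)$, respectively $(c_n\|e_n^*\|)$, into the two impartial $(C+\eta)$-equivalences forced by the auxiliary strategy and using $f_n^*=\|e_n^*\|e_n$, $f_n=e_n^*/\|e_n^*\|$, the weights cancel exactly, so $(x_n)$ is impartially $(C+\eta)$-equivalent to $(e_n)$ and $(x_n^*)$ to $(e_n^*)$. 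For the localization conditions, $\dist(x_n,W_n)=\|e_n^*\|^{-1}\dist(y_n^*,W_n)\le\dist(y_n^*,W_n)$, and since $y_n^*\in X$ and every $F\in W_n^{\perp\perp}$ annihilates $W_n^\perp$, a one-line Hahn--Banach estimate yields $\dist_X(y_n^*,W_n)\le\dist_{X^{**}}(y_n^*,G_n')<\eta_n'\le\eta_n$; similarly $\dist(x_n^*,G_n)=\|e_n^*\|\,\dist(y_n,W_n')<2\lambda\,\eta_n'=\eta_n$. Hence all four winning conditions hold and $(e_j)$ is $C$-strategically reproducible in $X$. The only real obstacle is the rescaling point already flagged; beyond that the argument is bookkeeping, the main care being to keep straight which of $X$, $X^*$, $X^{**}$ each object inhabits and to translate the two types of subspace side-conditions correctly --- a $w^*$-closed cofinite subspace of $X^*$ is reused verbatim, whereas a cofinite subspace of $X$ is transported to $X^{**}$ via its bi-annihilator.
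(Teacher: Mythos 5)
Your proposal is correct and follows essentially the same approach as the paper: run an auxiliary play of the reproducibility game in $X^*$ against the hypothesized winning strategy, transporting $W_n$ to its $w^*$-closure $W_n^{\perp\perp}=\overline{W_n}^{w^*}$ in $X^{**}$ and reusing $G_n$ as a norm-closed cofinite subspace of $X^*$, then apply Hahn--Banach to identify $\dist_X(\cdot,W_n)$ with $\dist_{X^{**}}(\cdot,\overline{W_n}^{w^*})$. The paper simplifies by assuming the basis is bimonotone (so $\|e_j^*\|=1$) and remarks that the general case needs only a slight modification; you have carried out that modification explicitly via the $\|e_n^*\|^{\pm 1}$ rescalings, which is exactly the bookkeeping being alluded to.
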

 
 \begin{proof}
   For the sake of simplicity, we assume that $(e_j)$ is bimonotone (and thus,
   $\|e_j^*\| = \|e_j\| = 1$); the statement still holds without that assumption and can be proved
   by slightly modifying the argument given below.

   We are now describing a winning strategy for Player II, assuming he has a winning strategy in
   $X^*$.  Assume that in Turn $n$ Step $1$ Player I picks $W_n\in\mathrm{cof}(X)$ and
   $G_n\in\mathrm{cof}_{w^*}(X^*)$.  Using his winning strategy in $X^*$ for the spaces
   $\widetilde G_n = \overline{W_n}^{w*}\in\mathrm{cof}_{w^*}(X^{**})$ and
   $\widetilde W_n = G_n\in\mathrm{cof}(X^*)$, Player II completes Step $2$ of Turn $n$.  Obviously,
   \eqref{enu:strat-rep:i}, \eqref{enu:strat-rep:ii} and~\eqref{enu:strat-rep:iii} are satisfied,
   while~\eqref{enu:strat-rep:iv} follows from the fact that
   $\dist(x_n,\overline{W_n}^{w^*}) = \dist(x_n,W_n)$, which is a consequence of the Hahn-Banach
   theorem.
 \end{proof}

 \begin{remark}\label{remark:predual}
   Using \Cref{proposition: predual}, we are able to deduce the following two assertions.
   By~\cite[Theorem~5.2]{lechner:motakis:mueller:schlumprecht:2018} the Haar basis $(h_I)$ is
   strategically reproducible in $H^1$, hence, $(h_I)$ is also strategically reproducible in $\vmo$.
   Moreover, the biparameter Haar system $(h_{I, J})$ in $\vmo(H^p)$ is $C_p$-strategically
   reproducible for a constant $C_p>0$, which satisfies
   $\sup_{p_0 \leq p \leq p_1} C_p \leq C_{p_0,p_1} < \infty$ whenever
   $1 < p_0 \leq p_1 < \infty$~\cite[Theorem~5.3]{lechner:motakis:mueller:schlumprecht:2018}.
 \end{remark}
 
\begin{defin}\label{Def: asympt curved} Let $X$ be  a Banach space with a basis $(e_n)$. We say that $X$ is {\em asymptotically curved } (with respect to $(e_j)$)
  itf for every normalized block basis $(x_n)$
 $$\lim_{n\to\infty} \frac1n \Big\|\sum_{j=1}^n x_n\Big\|,$$
 
 As already defined in the introduction we call the sequence of Banach spaces $(X_k)$
 \emph{uniformly asymptotically curved} with respect to the array $(e_{(k,j)})$, if for every
 $k\in\mathbb{N}$ and every block basis $(x_{(k,j)})_j$ of $(e_{(k,j)})_j$ we have
 \begin{equation*}
   \lim_n \sup_{k}\Big\|\frac{1}{n}\sum_{j=1}^n x_{(k,j)} \Big\|_{X_k} = 0.
 \end{equation*}
\end{defin}

The following is well known and a special case of Proposition 3 in
\cite{odell:schlumprecht:zsak:2007}, it can also be easily shown directly.

\begin{lem}\label{lem:asymtotically-curved:1}
  Let $X$ denote a Banach space with a Schauder basis $(e_j)$, and let $1 \leq r \leq \infty$,
  $1\leq s \leq \infty$ be such that $\frac{1}{r} + \frac{1}{s} = 1$.  Assume that each block
  sequence $(x_j^*)$ of the coordinate functionals $(e_j^*)$ of $(e_j)$ satisfies the lower
  $r$-estimate
  \begin{equation*}
    \Bigl\|\sum_{j=1}^n x_j^*\Bigr\|_{X^*}
    \geq c \Bigl(\sum_{j=1}^n \|x_j^*\|_{X^*}^r\Bigr)^{1/r},
    \qquad n\in\mathbb{N},
  \end{equation*}
  for some constant $c>0$ independent of $n$.  Then each block sequence $(x_j)$ of $(e_j)$ satisfies
  the upper $s$-estimate
  \begin{equation*}
    \Bigl\|\sum_{j=1}^n x_j\Bigr\|_{X}
    \leq \frac{1}{c} \Bigl(\sum_{j=1}^n \|x_j\|_{X}^s\Bigr)^{1/s},
    \qquad n\in\mathbb{N}.
  \end{equation*}
\end{lem}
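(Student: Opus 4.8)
The plan is to run the standard duality argument that converts a lower $r$-estimate for the coordinate functionals into an upper $s$-estimate for the basis vectors. Let $(x_j)$ be a block sequence of $(e_j)$; by definition there are integers $0=m_0<m_1<m_2<\cdots$ with $x_j\in[e_i:m_{j-1}<i\le m_j]$, and I set $E_j=\{i\in\N:m_{j-1}<i\le m_j\}$, so that $E_1<E_2<\cdots$ are successive intervals with $\bigcup_{j\le n}E_j=\{1,\dots,m_n\}$. Fix $n\in\N$, put $y=\sum_{j=1}^n x_j$, and use the Hahn--Banach theorem to choose $x^*\in X^*$ with $\|x^*\|\le1$ and $x^*(y)=\|y\|_X$.

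Next I would dualize by restricting $x^*$ to the blocks. Writing $P_E\colon X\to X$ for the canonical basis projection onto $[e_i:i\in E]$, set $x_j^*=P_{E_j}^*x^*=\sum_{i\in E_j}x^*(e_i)e_i^*$; this is a block of $(e_j^*)$ supported on $E_j$, and after discarding the (harmless) indices $j$ with $x_j^*=0$ the remaining ones form a block sequence of $(e_j^*)$, to which the hypothesis applies. The two facts I need are: (a) since $x_j$ is supported on $E_j$ and the $E_j$ are pairwise disjoint, $x_j^*(x_j)=x^*(P_{E_j}x_j)=x^*(x_j)$ and $x_i^*(x_j)=0$ for $i\ne j$, whence $\sum_{j=1}^n x_j^*(x_j)=\sum_{j=1}^n x^*(x_j)=x^*(y)=\|y\|_X$; and (b) $\sum_{j=1}^n x_j^*=P_{\{1,\dots,m_n\}}^*x^*$ is the image of $x^*$ under the adjoint of the $m_n$-th partial-sum projection, so $\|\sum_{j=1}^n x_j^*\|_{X^*}\le\|x^*\|\le1$ when the basis is monotone (for a general basis this inequality costs a factor of the basis constant, which is the source of the constant in the statement).

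Finally I would combine (a), the bound $|x_j^*(x_j)|\le\|x_j^*\|_{X^*}\|x_j\|_X$, H\"older's inequality (with the usual conventions in the endpoint cases $r=1,s=\infty$ and $r=\infty,s=1$), the assumed lower $r$-estimate for $(x_j^*)$, and (b):
\begin{align*}
  \|y\|_X = \sum_{j=1}^n x_j^*(x_j)
  &\le \sum_{j=1}^n \|x_j^*\|_{X^*}\|x_j\|_X
  \le \Bigl(\sum_{j=1}^n \|x_j^*\|_{X^*}^r\Bigr)^{1/r}\Bigl(\sum_{j=1}^n \|x_j\|_X^s\Bigr)^{1/s}\\
  &\le \frac1c \Bigl\|\sum_{j=1}^n x_j^*\Bigr\|_{X^*}\Bigl(\sum_{j=1}^n \|x_j\|_X^s\Bigr)^{1/s}
  \le \frac1c\Bigl(\sum_{j=1}^n \|x_j\|_X^s\Bigr)^{1/s}.
\end{align*}
Since $n$ was arbitrary, this is exactly the claimed upper $s$-estimate. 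I expect the only point needing any care to be the dualization bookkeeping in the previous paragraph --- checking that restricting a norming functional to the successive supporting intervals really does yield a legitimate block sequence of $(e_j^*)$ whose sum has norm at most one --- together with keeping the H\"older conventions straight when $r$ or $s$ equals $1$ or $\infty$; neither is a genuine obstacle, which is consistent with the lemma being flagged as well known.
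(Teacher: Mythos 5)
The paper does not prove this lemma: it only cites it as a special case of Proposition~3 in Odell--Schlumprecht--Zs\'ak (2007) with the remark that ``it can also be easily shown directly.'' Your duality argument is precisely that direct proof, and it is correct: picking a norming functional $x^*$ for $y=\sum_{j=1}^n x_j$, splitting it along the supporting intervals $E_j$ to get blocks $x_j^*=\sum_{i\in E_j}x^*(e_i)e_i^*$ of $(e_j^*)$, using $\sum_j x_j^*(x_j)=x^*(y)=\|y\|$, H\"older, and the hypothesized lower $r$-estimate yields the claim. Your flag about the last inequality is the one point worth underlining: $\sum_{j=1}^n x_j^*$ is the image of $x^*$ under the adjoint of the $m_n$-th partial-sum projection, so $\|\sum_{j=1}^n x_j^*\|\le 1$ as needed only when the basis is monotone; for a general Schauder basis with constant $\lambda$ one gets $\|\sum_{j=1}^n x_j^*\|\le\lambda$, hence the conclusion with $\lambda/c$ in place of $1/c$. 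This gap in the paper's stated constant is harmless for its purposes, since the lemma is only invoked for the ($1$-unconditional, hence monotone) biparameter Haar system, but it is right of you to notice it rather than silently absorb it.
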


The following Lemma is proved easily.
\begin{lem}\label{lem:asymtotically-curved:2}
  Let $1< s < \infty$.  Assume that the array $(e_{k,j})_{k,j}$ is such that $(e_{k,j})_j$ satisfies
  an upper $s$-estimate for each $k$, where the constant $C$ is independent of $k$.  Then the array
  $(e_{k,j})_{k,j}$ is uniformly asymptotically curved.
\end{lem}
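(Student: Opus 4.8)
The plan is to unwind the definitions and read off the conclusion directly from the upper $s$-estimate: the point is that an $\ell^s$-upper estimate forces the Cesàro averages of a normalized block sequence to be bounded by $C n^{1/s-1}$, and this power of $n$ tends to $0$ since $s>1$. So fix, for each $k\in\mathbb N$, a normalized block basis $(x_{(k,j)})_j$ of $(e_{(k,j)})_j$ (normalization being implicit in the notion of block basis; for merely semi-normalized blocks with $\sup_{k,j}\|x_{(k,j)}\|_{X_k}\le M$ the same argument gives the conclusion with an extra factor $M$). First I would observe that a block sequence of a block sequence of $(e_{(k,j)})_j$ is again a block sequence of $(e_{(k,j)})_j$, so the hypothesis that $(e_{(k,j)})_j$ satisfies an upper $s$-estimate with a constant $C$ \emph{independent of $k$} (as in the conclusion of \Cref{lem:asymtotically-curved:1}) applies verbatim to $(x_{(k,j)})_j$:
\[
\Big\|\sum_{j=1}^n x_{(k,j)}\Big\|_{X_k}\le C\Big(\sum_{j=1}^n\|x_{(k,j)}\|_{X_k}^s\Big)^{1/s},\qquad n\in\mathbb N.
\]

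Next, since $\|x_{(k,j)}\|_{X_k}=1$, the right-hand side equals $C\,n^{1/s}$, hence
\[
\Big\|\frac1n\sum_{j=1}^n x_{(k,j)}\Big\|_{X_k}\le C\,n^{1/s-1},
\]
and, crucially, the bound $C\,n^{1/s-1}$ does not depend on $k$. Taking the supremum over $k$ yields $\sup_k\big\|\frac1n\sum_{j=1}^n x_{(k,j)}\big\|_{X_k}\le C\,n^{1/s-1}$, and because $1<s<\infty$ we have $1/s-1<0$, so $n^{1/s-1}\to 0$ as $n\to\infty$. This is precisely \eqref{eq:uac}, so the array $(e_{(k,j)})_{k,j}$ is uniformly asymptotically curved.

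There is no genuine obstacle in this argument; the only point requiring any care is the source of the \emph{uniformity} over $k$, which is supplied exactly by the assumption that the upper-$s$-estimate constant $C$ is independent of $k$ — without it one would only conclude that each $X_k$ is individually asymptotically curved, which is strictly weaker than uniform asymptotic curvedness. A secondary, purely bookkeeping, issue is the normalization convention for block bases, handled by the parenthetical remark above.
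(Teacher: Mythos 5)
Your proof is correct and is exactly the straightforward argument the authors have in mind — the paper states the lemma without proof, remarking only that it "is proved easily," and your derivation (upper $s$-estimate gives $\|\sum_{j\le n} x_{(k,j)}\|\le C n^{1/s}$ uniformly in $k$, so the Cesàro mean is $O(n^{1/s-1})\to 0$ since $s>1$) is the canonical way to see it. The one small remark worth flagging: given how the lemma is actually invoked in \Cref{lem:curved}, following \Cref{lem:asymtotically-curved:1}, \Cref{pro:r-estimate} and \Cref{remark:Lr(Ls)}, the hypothesis "$(e_{k,j})_j$ satisfies an upper $s$-estimate" is to be read as \emph{every block sequence} of $(e_{k,j})_j$ satisfying the upper $s$-estimate with the $k$-independent constant; under that reading you apply the estimate directly to $(x_{(k,j)})_j$ and the "block basis of a block basis" step is not needed, though it is harmless. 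Your parenthetical about normalization versus uniformly bounded (semi-normalized) blocks is the right thing to say, since \eqref{eq:uac} implicitly requires the block sequences to be uniformly bounded above for the supremum over $k$ to be finite.
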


\begin{prop}\label{pro:r-estimate}
  Let $1\leq p,q < \infty$.  Then every block sequence of the biparameter Haar system in $H^p(H^q)$
  satisfies the lower $\max(2,p,q)$-estimate with constant $1$ and the upper $\min(2,p,q)$-estimate
  also with constant $1$.
\end{prop}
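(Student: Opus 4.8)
The plan is to reduce \Cref{pro:r-estimate}, via the square-function description of the $H^p(H^q)$-norm, to two elementary inequalities for iterated $L^t$-quasinorms. Write a generic element as $f=\sum_{I\times J\in\cR}a_{I,J}h_{I,J}$ and let $Sf=\bigl(\sum_{I\times J}a_{I,J}^2h_{I,J}^2\bigr)^{1/2}$ be its square function. Since $h_I^2=\mathbf 1_I$ we have $h_{I,J}^2=\mathbf 1_{I\times J}$ and hence $Sf=\bigl(\sum_{I\times J\in\supp f}a_{I,J}^2\mathbf 1_{I\times J}\bigr)^{1/2}$; moreover \eqref{HpHqnorm} says precisely that $\|f\|_{H^p(H^q)}=\|Sf\|_{L^p(L^q)}$, where I abbreviate $L^p(L^q)=L^p_x(L^q_y)$ on $[0,1)^2$. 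If $(f_k)$ is a block sequence of the biparameter Haar system, then the index sets $\supp f_k\subset\cR$ are pairwise disjoint, so $S\bigl(\sum_k f_k\bigr)^2=\sum_k(Sf_k)^2$ pointwise. Setting $g_k:=(Sf_k)^2\ge 0$ and $P:=p/2$, $Q:=q/2$, this yields
\[
  \Bigl\|\sum_k f_k\Bigr\|_{H^p(H^q)}^{2}=\Bigl\|\sum_k g_k\Bigr\|_{L^P(L^Q)},
  \qquad
  \|f_k\|_{H^p(H^q)}^{2}=\|g_k\|_{L^P(L^Q)}.
\]
Since $\max(2,p,q)=2\max(1,P,Q)$ and $\min(2,p,q)=2\min(1,P,Q)$, the proposition follows once we establish, for nonnegative $g_1,\dots,g_n\in L^P(L^Q)$, the estimates
\[
  \Bigl(\sum_k\|g_k\|_{L^P(L^Q)}^{\alpha}\Bigr)^{1/\alpha}
  \le\Bigl\|\sum_k g_k\Bigr\|_{L^P(L^Q)}
  \le\Bigl(\sum_k\|g_k\|_{L^P(L^Q)}^{\beta}\Bigr)^{1/\beta},
  \qquad \alpha:=\max(1,P,Q),\ \beta:=\min(1,P,Q).
\]

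The backbone is a one-variable statement, applied twice: on any measure space, for nonnegative $F_k$,
\[
  \Bigl\|\sum_k F_k\Bigr\|_{L^t}^{\max(1,t)}\ge\sum_k\|F_k\|_{L^t}^{\max(1,t)}
  \qquad\text{and}\qquad
  \Bigl\|\sum_k F_k\Bigr\|_{L^t}^{\min(1,t)}\le\sum_k\|F_k\|_{L^t}^{\min(1,t)}.
\]
For $t\ge 1$ these are, respectively, integration of the pointwise bound $(\sum_kF_k)^t\ge\sum_kF_k^t$ and Minkowski's inequality; for $0<t<1$ they are integration of $(\sum_kF_k)^t\le\sum_kF_k^t$ and the reverse Minkowski inequality $\|F+G\|_{L^t}\ge\|F\|_{L^t}+\|G\|_{L^t}$, iterated. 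I would apply this first in the inner variable, obtaining $\widetilde G^{\,\max(1,Q)}\ge\sum_kG_k^{\max(1,Q)}$ and $\widetilde G^{\,\min(1,Q)}\le\sum_kG_k^{\min(1,Q)}$ pointwise in $x$, where $\widetilde G(x)=\|(\sum_kg_k)(x,\cdot)\|_{L^Q}$ and $G_k(x)=\|g_k(x,\cdot)\|_{L^Q}$; and then in the outer variable, using $\|h\|_{L^P}=\|h^{c}\|_{L^{P/c}}^{1/c}$ for $c>0$ together with $\|G_k^{\,c}\|_{L^{P/c}}=\|g_k\|_{L^P(L^Q)}^{c}$. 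The exponents combine as $\max(1,Q)\cdot\max\bigl(1,P/\max(1,Q)\bigr)=\max(1,P,Q)$ and $\min(1,Q)\cdot\min\bigl(1,P/\min(1,Q)\bigr)=\min(1,P,Q)$, which is exactly what is needed, and every step carries constant $1$.

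The only ingredient that is not completely routine is the reverse Minkowski inequality in $L^t$ for $0<t<1$ (equivalently, concavity of $F\mapsto\|F\|_{L^t}$ on the nonnegative cone), which follows from the reverse H\"older inequality; it is unavoidable here because when $p$ or $q$ lies in $[1,2)$ the corresponding exponent $P$ or $Q$ lies in $[\tfrac12,1)$, so $L^P(L^Q)$ is merely a quasi-Banach lattice. Apart from that, the argument is just bookkeeping of the two exponents $\max(1,P,Q)$ and $\min(1,P,Q)$.
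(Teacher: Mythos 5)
Your argument is correct, and it is a genuinely cleaner reorganization of the same raw ingredients that the paper uses. The paper's proof, after passing to the square function and observing $\bigl(\mathbb{S}(\sum_j f_j)\bigr)^2=\sum_j(\mathbb{S}f_j)^2$, carries out an explicit eight-way case analysis (four cases $p\gtrless 2$, $q\gtrless 2$ for each of the upper and lower estimates), each time unwinding the iterated integral and invoking, as appropriate, either Minkowski's inequality in $L^{p/2}$, $L^{q/2}$, or $\ell^{q/p}$, or a pointwise sub-/superadditivity of a power. You distill this into a single one-variable lemma for nonnegative $F_k$,
\[
\Bigl\|\sum_kF_k\Bigr\|_{L^t}^{\max(1,t)}\ \ge\ \sum_k\|F_k\|_{L^t}^{\max(1,t)},
\qquad
\Bigl\|\sum_kF_k\Bigr\|_{L^t}^{\min(1,t)}\ \le\ \sum_k\|F_k\|_{L^t}^{\min(1,t)},
\]
which for $t\ge1$ is superadditivity of $x\mapsto x^t$ together with Minkowski, and for $0<t<1$ is subadditivity of $x\mapsto x^t$ together with reverse Minkowski on nonnegative cones. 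Iterating (inner variable, then outer variable, after rescaling $\|h\|_{L^P}=\|h^c\|_{L^{P/c}}^{1/c}$ so the lemma applies with $t=P/\max(1,Q)$ resp.\ $t=P/\min(1,Q)$) and using the identities $\max(1,Q)\cdot\max(1,P/\max(1,Q))=\max(1,P,Q)$ and $\min(1,Q)\cdot\min(1,P/\min(1,Q))=\min(1,P,Q)$ recovers exactly the paper's exponents. What you buy is the elimination of case bookkeeping: the exponent arithmetic $a\cdot\max(1,P/a)=\max(a,P)$ automatically produces the right exponent in all regimes, and every step visibly has constant $1$. What the paper's version buys in exchange is that each case is written out directly in the iterated-integral notation with no abstract detour, so each inequality can be checked line by line; it also never needs to name reverse Minkowski, though it uses its pointwise form implicitly whenever the relevant exponent drops below $1$. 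Both proofs are complete and of comparable length; yours is easier to verify globally, the paper's easier to verify locally.
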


\begin{proof}
  Before we begin with the actual proof, we define the biparameter square function $\mathbb{S}$ by
  \begin{equation*}
    \mathbb{S}\Bigl( \sum_{I,J\in\mathcal{D}} a_{I,J} h_{I,J}\Bigr)
    = \Bigl(\sum_{I,J\in\mathcal{D}} a_{I,J}^2 h_{I,J}^2\Bigr)^{1/2}.
  \end{equation*}

  Let $(f_i)$ denote a block sequence of the biparameter Haar system.  Note that
  \begin{equation}\label{eq:pro:r-estimate:1}
    \Bigl\|\sum_{j=1}^n f_j\Bigr\|_{H^p(H^q)}
    = \biggl( \int \Bigl(
    \int \Bigl( \sum_{j=1}^n \bigl(\mathbb{S} f_j\bigr)^2 \Bigr)^{q/2}dy
    \Bigr)^{p/q} dx \biggr)^{1/p}.
  \end{equation}

  First, we will show that $H^p(H^q)$ satisfies the upper $\min(2,p,q)$-estimate with constant $1$.
  \begin{description}
  \item[Case $p\geq 2$, $q\geq 2$] Since $q/2\geq 1$, we reinterpret~\eqref{eq:pro:r-estimate:1} and
    use Minkowski's inequality to obtain
    \begin{align*}
      \Bigl\|\sum_{j=1}^n f_j\Bigr\|_{H^p(H^q)}
      &= \biggl( \int
        \Bigl\|\sum_{j=1}^n \bigl(\mathbb{S} f_j\bigr)^2\Bigr\|_{L^{q/2}(y)}^{p/2}
        dx \Biggr)^{1/p}
        \leq \biggl( \int
        \Bigl( \sum_{j=1}^n \bigl\| \bigl(\mathbb{S} f_j\bigr)^2\bigr\|_{L^{q/2}(y)} \Bigr)^{p/2}
        dx \biggr)^{1/p}\\
      &= \biggl( \int
        \Bigl( \sum_{j=1}^n \Bigl( \int \bigl(\mathbb{S} f_j\bigr)^q dy\Bigr)^{2/q}  \Bigr)^{p/2}
        dx \biggr)^{1/p}
        = \Bigl\|\sum_{j=1}^n \Bigl( \int \bigl(\mathbb{S} f_j\bigr)^q
        dy\Bigr)^{2/q}\Bigr\|_{L^{p/2}(x)}^{1/2}\\
      &\leq \biggl( \sum_{j=1}^n
        \Bigl\| \Bigl( \int \bigl(\mathbb{S} f_j\bigr)^q dy\Bigr)^{2/q}\Bigr\|_{L^{p/2}(x)}
        \biggr)^{1/2}
        = \Bigl(\sum_{j=1}^n \|f_j\|_{H^p(H^q)}^2\Bigr)^{1/2}.
    \end{align*}

  \item[Case $p\leq 2$, $q\geq 2$] The first step is the same as in the previous case, \ie, we have
    \begin{equation*}
      \Bigl\|\sum_{j=1}^n f_j\Bigr\|_{H^p(H^q)}
      \leq \biggl( \int
      \Bigl( \sum_{j=1}^n \Bigl( \int \bigl(\mathbb{S} f_j\bigr)^q dy\Bigr)^{2/q}  \Bigr)^{p/2}
      dx \biggr)^{1/p}.
    \end{equation*}
    Since $p/2\leq 1$, we obtain
    \begin{align*}
      \Bigl\|\sum_{j=1}^n f_j\Bigr\|_{H^p(H^q)}
      &\leq \biggl( \int
        \Bigl( \sum_{j=1}^n \Bigl( \int \bigl(\mathbb{S} f_j\bigr)^q dy\Bigr)^{2/q}  \Bigr)^{p/2}
        dx \biggr)^{1/p}\\
      &\leq \biggl( \sum_{j=1}^n
        \int \Bigl( \int \bigl(\mathbb{S} f_j\bigr)^q dy\Bigr)^{p/q}
        dx \biggr)^{1/p}
        = \Bigl( \sum_{j=1}^n \bigl\| f_j\bigr\|_{H^p(H^q)}^p \Bigr)^{1/p}.
    \end{align*}

  \item[Case $p\geq 2$, $q\leq 2$] Since $q/2\leq 1$, \eqref{eq:pro:r-estimate:1} yields
    \begin{align*}
      \Bigl\|\sum_{j=1}^n f_j\Bigr\|_{H^p(H^q)}
      &\leq \biggl( \int \Bigl( \sum_{j=1}^n
        \int \bigl(\mathbb{S} f_j\bigr)^qdy
        \Bigr)^{p/q} dx \biggr)^{1/p}
        = \Bigl\|\sum_{j=1}^n \int \bigl(\mathbb{S} f_j\bigr)^qdy\Bigr\|_{L^{p/q}(x)}^{1/q}\\
      &\leq \biggl( \sum_{j=1}^n \Bigl(
        \int \Bigl( \int \bigl(\mathbb{S} f_j\bigr)^qdy \Bigr)^{p/q} dx
        \Bigr)^{q/p} \biggr)^{1/q}
        =  \Bigl(\sum_{j=1}^n \|f_j\|_{H^p(H^q)}^q\Bigr)^{1/q}.
    \end{align*}

  \item[Case $p\leq 2$, $q\leq 2$] The first step is similar to the previous case, \ie,\
    \begin{equation}\label{eq:pro:r-estimate:2}
      \Bigl\|\sum_{j=1}^n f_j\Bigr\|_{H^p(H^q)}
      \leq \biggl( \int \Bigl( \sum_{j=1}^n
      \int \bigl(\mathbb{S} f_j\bigr)^qdy
      \Bigr)^{p/q} dx \biggr)^{1/p}.
    \end{equation}
    If $p\geq q$, we use Minkowski's inequality in $L^{p/q}$ and obtain
    \begin{equation*}
      \Bigl\|\sum_{j=1}^n f_j\Bigr\|_{H^p(H^q)}
      \leq \biggl\|
      \sum_{j=1}^n \Bigl\| \int \bigl(\mathbb{S} f_j\bigr)^qdy \Bigr\|_{L^{p/q}(x)}
      \biggr\|^{1/q}
      = \Bigl(\sum_{j=1}^n \| f_j\|_{H^p(H^q)}^q\Bigr)^{1/q}.
    \end{equation*}
    If $p\leq q$, \eqref{eq:pro:r-estimate:2} yields
    \begin{equation*}
      \Bigl\|\sum_{j=1}^n f_j\Bigr\|_{H^p(H^q)}
      \leq \biggl(\sum_{j=1}^n \int \Bigl( 
      \int \bigl(\mathbb{S} f_j\bigr)^qdy
      \Bigr)^{p/q} dx \biggr)^{1/p}
      = \Bigl(\sum_{j=1}^n \|f_j\|_{H^p(H^q)}^p \Bigr)^{1/p}.
    \end{equation*}
  \end{description}

  Secondly, we will prove that $H^p(H^q)$ satisfies the lower $\max(2,p,q)$-estimate with constant
  $1$.
  \begin{description}
  \item[Case $p\geq q\geq 2$] Since $q/2\geq 1$, we obtain from~\eqref{eq:pro:r-estimate:1}
    \begin{equation}\label{eq:pro:r-estimate:3}
      \Bigl\|\sum_{j=1}^n f_j\Bigr\|_{H^p(H^q)}
      \geq \biggl( \int \Bigl(
      \sum_{j=1}^n \int \bigl(\mathbb{S} f_j\bigr)^qdy
      \Bigr)^{p/q} dx \biggr)^{1/p}.
    \end{equation}
    Using $p/q\geq 1$ yields
    \begin{equation*}
      \Bigl\|\sum_{j=1}^n f_j\Bigr\|_{H^p(H^q)}
      \geq \biggl( \sum_{j=1}^n \int \Bigl(
      \int \bigl(\mathbb{S} f_j\bigr)^qdy
      \Bigr)^{p/q} dx \biggr)^{1/p}
      = \Bigl( \sum_{j=1}^n\|f_j\|_{H^p(H^q)}^p \Bigr)^{1/p}.
    \end{equation*}

  \item[Case $q\geq p\geq 2$] Using~\eqref{eq:pro:r-estimate:3} and Minkowski's inequality yields
    \begin{align*}
      \Bigl\|\sum_{j=1}^n f_j\Bigr\|_{H^p(H^q)}
      &\geq \biggl( \int \Bigl(
        \sum_{j=1}^n \int \bigl(\mathbb{S} f_j\bigr)^qdy
        \Bigr)^{p/q} dx \biggr)^{1/p}
        = \biggl( \int
        \biggl\|\Bigl(\int \bigl(\mathbb{S} f_j\bigr)^qdy\Bigr)^{p/q}\biggr\|_{\ell^{q/p}(j)}
        dx \biggr)^{1/p}\\
      &\geq \biggl(
        \biggl\|\int \Bigl(\int \bigl(\mathbb{S} f_j\bigr)^qdy\Bigr)^{p/q} dx\biggr\|_{\ell^{q/p}(j)}
        \biggr)^{1/p}
        = \Bigl( \sum_{j=1}^n \|f_j\|_{H^p(H^q)}^q \Bigr)^{1/q}.
    \end{align*}

  \item[Case $p\geq 2\geq q$] By~\eqref{eq:pro:r-estimate:1} and Minkowski's inequality, and since
    $2/q\geq 1$, $p/2\geq 1$ we obtain
    \begin{align*}
      \Bigl\|\sum_{j=1}^n f_j\Bigr\|_{H^p(H^q)}
      &= \biggl( \int \Bigl(
        \int \Bigl\| \bigl(\mathbb{S} f_j\bigr)^{q}\Bigr\|_{\ell^{2/q}(j)} dy
        \Bigr)^{p/q} dx \biggr)^{1/p}
        \geq \biggl( \int
        \Bigl\| \int \bigl(\mathbb{S} f_j\bigr)^{q} dy\Bigr\|_{\ell^{2/q}(j)}^{p/q}
        dx \biggr)^{1/p}\\
      &= \biggl( \int
        \Bigl(\sum_{j=1}^n \Bigl(\int \bigl(\mathbb{S} f_j\bigr)^{q} dy\Bigr)^{2/q}\Bigr)^{p/2}
        dx \biggr)^{1/p}\\
      &\geq \biggl( \sum_{j=1}^n \int
        \Bigl(\int \bigl(\mathbb{S} f_j\bigr)^{q} dy\Bigr)^{p/q}
        dx \biggr)^{1/p}
        = \Bigl( \sum_{j=1}^n \|f_j\|_{H^p(H^q)}^p \Bigr)^{1/p}.
    \end{align*}

  \item[Case $p,q\leq 2$] The first step is the same as in the previous case, \ie,
    \begin{align*}
      \Bigl\|\sum_{j=1}^n f_j\Bigr\|_{H^p(H^q)}
      &\geq \biggl( \int
        \Bigl(\sum_{j=1}^n \Bigl(\int \bigl(\mathbb{S} f_j\bigr)^{q} dy\Bigr)^{2/q}\Bigr)^{p/2}
        dx \biggr)^{1/p}\\
      &= \biggl( \int
        \Bigl\|\Bigl(\int \bigl(\mathbb{S} f_j\bigr)^{q} dy\Bigr)^{p/q}\Bigr\|_{\ell^{2/p}(j)}
        dx \biggr)^{1/p}
    \end{align*}
    Here, $2/p\geq 1$, hence, by Minkowski's inequality, we obtain
    \begin{equation*}
      \Bigl\|\sum_{j=1}^n f_j\Bigr\|_{H^p(H^q)}
      \geq \biggl( 
      \Bigl\| \int\Bigl(\int \bigl(\mathbb{S} f_j\bigr)^{q} dy\Bigr)^{p/q} dx\Bigr\|_{\ell^{2/p}(j)}
      \biggr)^{1/p}
      = \Bigl(\sum_{j=1}^n \|f_j\|_{H^p(H^q)}^2\Bigr)^{1/2}.
    \end{equation*}
  \end{description}
\end{proof}

\begin{remark}\label{remark:Lr(Ls)}
  Let $1 < r,s < \infty$, then the identity operator provides an isomorphism between $H^r(H^s)$ and
  $L^r(L^s)$ (see~Capon~\cite{capon:1982:2}); hence, by \Cref{pro:r-estimate}, each block sequence
  with respect to the biparameter Haar system in $L^r(L^s)$ satisfies an upper $\min(r,s)$-estimate
  with constant $C=C_{r,s}$.  Moreover,
  $\sup_{p_0\leq r,s\leq p_1} C_{r,s}\leq C_{p_0,p_1} < \infty$ whenever
  $1 < p_0 \leq p_1 < \infty$.
\end{remark}

\section{Simultaneous strategical reproducibility and statement of the main results}\label{S:1}

In order to state our main results, we will now state precisely the necessary definitions.  Recall
that we defined $Z$ as the space
\begin{equation}\label{eq:Z-def}
  Z
  =\ell^\infty(X_k:k\in\N)
  =\big\{(x_k) : x_k\in X_k,\,k \kin\N,\, \|(x_k)\|
  =\sup_{k\in\N} \|x_k\|_{X_k} <\infty\big\}.
\end{equation}
We also put
\begin{equation*}
  Y
  =c_0(X_k:k\in\N)
  =\big\{(x_k): x_k\in X_k,\,k\kin\N,\, \lim_{k\to\infty} \|x_k\|_{X_k} =0\big\}.
\end{equation*}
If for some space $X$ we have $X_k=X$, for all $k\in\N$, we will write $\ell^\infty(X)$ and $c_0(X)$
instead of $\ell^\infty(X_k:k\in\N)$ and $c_0(X_k:k\in\N)$.

For $\overline x=(x_k)$ in $Z$ (or $Y$) we call the set $\supp(\overline x)=\{k\in\N: x_k\not=0\}$
{\em the support of $\overline x$ in Z (or $Y$)}.

For $N\subset \N$, and $\overline x=(x_k)\in Z $ we let $P_N(x)\in Z$ be the projection of $x$ on
the coordinates in $N$,\ie,
\begin{equation}\label{eq:PN-def}
  P_N(\overline x)
  = (y_k),
  \text{ with }
  y_k =
  \begin{cases}
    x_k &\text{if $k\in N$, and}\\
    0 &\text{if $k\not\in N$,}
  \end{cases}
\end{equation}
and we put $Z_N:= P_N(Z)$ which is isometrically isomorphically to $\ell^\infty(X_k:k\in N)$ and
will be identified with that space.  For $\overline x=(x_k)\in Z$ and $k\in\N$ we put
$P_k(\overline x)=x_k$.  In particular we identify $X_k$ with its image under the canonical
embedding into $Z$.  We also identify $X^*_k $ in the canonical way as a subspace of $Z^*$
($x^*\in X^*_k$ is acting on the $k$-component of $\bar z=(z_k)\in Z$).  Note that $X^*_k$ is thus a
$w^*$ closed subspace of $Z^*$.  For $k,j\in\N$ we also consider $e_{(k,j)}$ to be an element of
$Z$, and $e^*_{(k,j)}$ to be an element of $Z^*$ in the obvious way.

\begin{convention}\label{conv:1}
  We fix from now on a bijective map $\nu(\cdot, \cdot)\colon \N^2\to \N$, $(k,j)\mapsto \nu(k,j)$
  with the property that for some $i,j,k\in\N$ we have $\nu(k,i)<\nu(k,j)$ if and only if $i<j$.  We
  denote the inverse map by $(\kappa,\iota):\N\to \N^2,\ n\mapsto (\kappa(n), \iota(n))$.  We order
  the array $(e_{(k,j)}: k,j\in\N)$ into a sequence $(e_n)$, by putting
  $e_n=e_{(\kappa(n),\iota(n))}$ and $e^*_n=e_{(\kappa(n),\iota(n))}^*$.  More generally, whenever
  $(x_{(k,j)})_j$ is a sequence in $X_k$, $k\in\mathbb{N}$, then we order the array $(x_{(k,j)})$
  into the sequence $(x_n)$ defined by $x_n=x_{(\kappa(n),\iota(n))}$; we do the same for
  $(x_{(k,j)}^*)$.

  Let $\mathcal{P}$ denote the product topology on $Z$, \ie, the coarsest topology such that all the
  $P_k$, $k\in\mathbb{N}$, are continuous.  Let $\overline{z}^{(j)}\in Z$, $j\in\mathbb{N}$, and
  $\overline z\in Z$.  Then $(\overline z^{(j)})$ converges to $\overline z$ with respect to
  $\mathcal{P}$, if and only if
  \begin{equation*}
    \lim_{j\to\infty}  P_k \overline z^{(j)}= P_k \overline z,\, \text{ for all } k\kin\mathbb{N}.
  \end{equation*}
  Whenever a sequence converges in $Z$, we implicitly refer to convergence with respect to the
  product topology $\mathcal{P}$.  Whenever a sequence converges in some $X_k$, we refer to the norm
  topology in $X_k$.
\end{convention}

\begin{remark}\label{rem:diag-conv}
  For each $k\kin\mathbb{N}$, assume $(e_{(k,j)})_j$ has basis constant $\lambda\geq 1$.  Let
  $\sum_{n=1}^\infty a_n e_n\in Y$, where the series converges in the relative topology
  $\mathcal{P}|_Y$.  Then the series $\sum_{n=1}^\infty a_n e_n$ converges in the norm topology of
  $Y$.
\end{remark}

We now consider the following ``simultaneous version'' of the game described in
\cite{lechner:motakis:mueller:schlumprecht:2018}.
\begin{defin}\label{def:simul-strat-rep}
  Let $C \kge 1$.  We say that the array $(e_{(k,j)})$ is \emph{$C$-simultaneously strategically
    reproducible} if for every $k\in\mathbb{N}$ $(e_{(k,j)})_j$ is $C$-strategically reproducible.
\end{defin}

\begin{remark}\label{rem:2.4}
  Note that we can also describe simultaneously strategically reproducibility in terms of the
  following two player game: The array $(e_{(k,j)})$ is $C\textnormal{-}$ simultaneously
  strategically reproducible if and only if for every $\eta > 0$, Player II has a winning strategy
  for the game $\mathrm{Rep}_{(Z,(e_{(k,j))}))}(C,\eta)$ between Player I and Player II:

  Assume the space $Z$, $P_N$, $(e_n: n\in\N)$ and $(e^*_n: n\in\N)$, are defined as
  in~\eqref{eq:Z-def}, \eqref{eq:PN-def} and in \Cref{conv:1}.
  \begin{description}
  \item[Pregame] Before the first turn Player I is allowed to choose a partitions of
    $\N = N_1\cup N_2$.  For $k\in\N$, and $r=1,2$ let $N_r^{(k)}= \{\nu(k,j):j\in\N\}\cap N_r$.
  \item[Turn $n$, Step\! 1] Player I chooses $\eta_n>0$,
    $G_n\in\mathrm{cof}_{w^*}(X^*_{\kappa(n)})$, and $W_n\in\mathrm{cof}(X_{\kappa(n)})$.
  \item[Turn $n$, Step\! 2] Player II chooses $i_n\in\{1,2\}$, a finite subset $E_n$ of
    $ N^{(\kappa(n))}_{i_n}$ and sequences of non-negative real numbers
    $(\lambda_i^{(n)})_{i\in E_n}$, $(\mu_i^{(n)})_{i\in E_n}$ satisfying
    \begin{equation*}
      1-\eta <\sum_{i\in E_n}\lambda_i^{(n)}\mu_i^{(n)}< 1+\eta.
    \end{equation*}
  \item[Turn $n$, Step\! 3] Player I chooses $(\vp_i^{(n)})_{i\in E_n}$ in $\{-1,1\}^{E_n}$.
  \end{description}
  We say that Player II has a winning strategy in the game $\mathrm{Rep}_{Z,(e_{(k,j)})}(C,\eta)$ if
  he can force the following properties on the result:

  For all $k, j\in\N$ we set $n=\nu(k,j)$ and put
  \begin{equation*}
    x_n=x_{(k,j)} = \sum_{i\in E_n}\vp_i^{(n)} \lambda^{(n)}_ie_{(k,i)}
    \qquad\text{and}\qquad
    x^*_n= x_{(k,j)}^* = \sum_{i\in E_n}\vp_i^{(n)}\mu^{(n)}_ie^*_{(k,i)}
  \end{equation*} and demand:
  \begin{enumerate}[(i)]
  \item\label{enu:simul-game:i} the sequences $(x_{(k,j)})_j$ and $(e_{(k,j)})_j$ are impartially
    $(C+\eta)$-equivalent for each $k\in\N$;
  \item\label{enu:simul-game:ii} the sequences $(x^*_{(k,j)})_j$ and $(e^*_{(k,j)})_j$ are
    impartially $(C+\eta)$-equivalent for each $k\in\N$;
  \item\label{enu:simul-game:iii} for all $n\in\N$ we have $\mathrm{dist}(x^*_n, G_n) < \eta_n$;
  \item\label{enu:simul-game:iv} for all $n\in\N$ we have $\mathrm{dist}(x_n, W_n) < \eta_n$.
  \end{enumerate}
\end{remark}

Completely analogous to \Cref{factorization definitions:basis}, we define the corresponding notions
in $Z$, below.
\begin{defin}\label{defin:Z-prop}
  Let $T\colon Z\to Z$ be an operator.
  \begin{enumerate}[(i)]
  \item We call $T$ a {\em diagonal operator on $Z$}, if $e^*_m(Te_n)=0$, whenever $m\neq n$.
  \item We say that $T$ has {\em a large diagonal} if
    $\inf_{n}\big|e^*_{n}\big(Te_{n}\big)\big| > 0$.
  \end{enumerate}
\end{defin}
We are now in the position to state our two main results.
\begin{thm}\label{T:2.5}
  Assume that there are $C,\lambda\ge 1$, and a map $K\colon (0,\infty)\to (0,\infty)$ so that
  \begin{enumerate}[(i)]
  \item the basis constant of $(e_{(k,j)})_j$, is at most $\lambda$ in $X_k$, for each $k\in\N$;
  \item $(e_{(k,j)})_j$ has the $K$-diagonal factorization property in $X_k$, for each $k\in\N$;
  \item the array $(e_{(k,j)})_{k,j}$ is simultaneously $C$-strategically reproducible in $Z$.
  \end{enumerate}
  
  Let $T\colon Z\to Z$ be bounded and linear, with
  \begin{equation*}
    \delta=\inf_{k,j\in\N} \big| e^*_{(k,j)} (Te_{(k,j)})\big| >0.
  \end{equation*}
  Then for each sequence of infinite subsets $(\Omega_l)$ of $\mathbb{N}$, there is an infinite
  $\Gamma\subset \N$ so that $\Gamma\cap\Omega_l$ is infinite for all $l\in\mathbb{N}$ and the
  identity on $Z_\Gamma$ $\lambda C^{2}K(\delta)$-factors through $T$.
\end{thm}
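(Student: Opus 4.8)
\emph{Overall strategy.} The plan is to transplant the proof of \Cref{T:2.3} (that is, of \cite[Theorem~3.12]{lechner:motakis:mueller:schlumprecht:2018}) into the $\ell^\infty$-sum $Z$. Two features of $Z$ dictate the modifications. First, the array $(e_{(k,j)})$ is \emph{not} a Schauder basis of $Z$, so every operator on $Z$ that we construct will be produced as an $\ell^\infty$-sum $\bigoplus_k(\cdot)$ of operators on the coordinate spaces $X_k$ (hence automatically bounded, and continuous for the product topology $\cP$ of \Cref{conv:1}), and all series are read in $\cP$ (cf.\ \Cref{rem:diag-conv}). Second, the interaction estimates coming from the game will be uniform in $k$ only after passing to an infinite $\Gamma\subset\N$, which is exactly why the conclusion concerns $Z_\Gamma$ and not $Z$; the freedom to make $\Gamma$ meet every $\Omega_l$ infinitely is obtained by interleaving an enumeration of $\bigcup_l(\{l\}\times\Omega_l)$ into the diagonal extraction that produces $\Gamma$.

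\emph{Coordinatewise factorisations.} Fix $\vp>0$. For each $k$ let $D_k\colon X_k\to X_k$ be the diagonal of $P_kT|_{X_k}$ with respect to $(e_{(k,j)})_j$, so that $\inf_j|e^*_{(k,j)}(D_ke_{(k,j)})|\ge\delta$ (here one uses the standing fact --- automatic for unconditional bases --- that diagonal operators on the $X_k$ are bounded, uniformly in $k$). By hypothesis~(ii) and \Cref{rem:fact-prop} there are $A_k,B_k\colon X_k\to X_k$ with $\|A_k\|=\|B_k\|\le(K(\delta)+\vp)^{1/2}$ and $B_kD_kA_k=I_{X_k}$, with bounds independent of $k$. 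Thus $\cA_0:=\bigoplus_kA_k$, $\cB_0:=\bigoplus_kB_k$, $\cD:=\bigoplus_kD_k$ are bounded on $Z$, $\cB_0\cD\cA_0=I_Z$, and $\|\cA_0\|\,\|\cB_0\|\le K(\delta)+\vp$. Hence it suffices to find an infinite $\Gamma$ and bounded operators $U,V$ on $Z$ with $\|U\|\,\|V\|\le\lambda C^2+\vp$ such that $VTU$ agrees on $Z_\Gamma$ with $\cD$ up to a perturbation of small norm: composing with $\cB_0$ on the left and $\cA_0$ on the right, noting that $\cA_0,\cB_0,\cD$ preserve $Z_\Gamma$, and absorbing the perturbation by a Neumann series, yields $I_{Z_\Gamma}=(\cB_0V')\,T\,(U'\cA_0)$ with norm product $\le\lambda C^2K(\delta)+O(\vp)$; since $\vp$ is arbitrary, this is the assertion.

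\emph{Playing the game.} We play $\mathrm{Rep}_{Z,(e_{(k,j)})}(C,\eta)$ of \Cref{rem:2.4} (legitimate by \Cref{def:simul-strat-rep}) as Player~I, with $\eta>0$ small. In the pregame we split $\N=N_1\cup N_2$ according to the sign of the diagonal: $\nu(k,j)\in N_1$ iff $e^*_{(k,j)}(Te_{(k,j)})>0$. This is the decisive use of the pregame partition: every $E_n$ chosen by Player~II becomes \emph{sign-coherent}, so that with $d_{(k,i)}:=e^*_{(k,i)}(Te_{(k,i)})$ we get
\[
\Big|\sum_{i\in E_n}\lambda^{(n)}_i\mu^{(n)}_i\,d_{(\kappa(n),i)}\Big|=\sum_{i\in E_n}\lambda^{(n)}_i\mu^{(n)}_i\,|d_{(\kappa(n),i)}|\ge\delta(1-\eta).
\]
At turn $n$ (put $k=\kappa(n)$) we pick $\eta_n>0$ summably small, let $M_n\uparrow\infty$, and set $W_n\in\cof(X_k)$ and $G_n\in\cof_{w^*}(X^*_k)$ to be the annihilators, in $X_k$ resp.\ $X^*_k$, of the finite families $\{(T^*x^*_m)|_{X_k}:m<n\}\cup\{((P_kT-D_k)^*e^*_{(k,i)})|_{X_k}:i\le M_n\}$ and $\{P_kTx_m:m<n\}\cup\{(P_kT-D_k)e_{(k,i)}:i\le M_n\}$; these have finite codimension and are $w^*$-closed, each constraint being evaluation at a fixed vector of $X_k$. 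In Step~3 we choose $\vp^{(n)}_i=\sign\, d_{(k,i)}$. Player~II's winning strategy then returns block sequences $(x_{(k,j)})_j$ of $(e_{(k,j)})_j$ and $(x^*_{(k,j)})_j$ of $(e^*_{(k,j)})_j$, impartially $(C+\eta)$-equivalent to them for every $k$; from the choices of $W_n,G_n,\eta_n$ one obtains $\langle x^*_m,Tx_n\rangle\to0$ as $\max(m,n)\to\infty$ for $m\neq n$ (including across different coordinates $\kappa(m)\neq\kappa(n)$), while sign-coherence together with the $(P_kT-D_k)$-annihilator conditions keep $\langle x^*_{(k,j)},Tx_{(k,j)}\rangle$ within $\delta(1-\eta)\pm o(1)$.

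\emph{Assembly, descent to $\Gamma$, and the main obstacle.} Let $U=\bigoplus_kU_k$ be the spreading operator $e_{(k,j)}\mapsto x_{(k,j)}$ and $V=\bigoplus_kV_k$ the collapse operator $z\mapsto\big(\sum_j\langle x^*_{(k,j)},P_kz\rangle\,e_{(k,j)}\big)_k$. Using the impartial equivalences (i),(ii) of the game and the uniform lower bound on the numbers $\langle x^*_{(k,j)},Tx_{(k,j)}\rangle$, one checks that $U,V$ are bounded (hence $\cP$-continuous, being coordinatewise sums), that after the usual rescaling of the factors $\|U\|\,\|V\|\le\lambda C^2+\vp$, and that $VTU$ differs from a block-diagonal operator with diagonal $(\langle x^*_{(k,j)},Tx_{(k,j)}\rangle)_{k,j}$ --- hence, once the coordinatewise factorisation is folded in, from $\cD$ --- by an operator whose norm is controlled, coordinate by coordinate, by $\sum_n\eta_n$ together with residual interaction terms. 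These residual terms need not be small uniformly in $k$; a diagonal argument selects an infinite $\Gamma\subset\N$ on which they are $<\vp$, and by the interleaving above $\Gamma$ may be chosen to meet each $\Omega_l$ infinitely. On $Z_\Gamma$ the perturbation is then absorbed as in the second paragraph, completing the proof. The technical heart, and the main obstacle, is precisely this uniform-in-$k$ control of the off-diagonal and cross-coordinate interactions of $T$ with the block sequences: the pregame sign-split disposes once and for all of the main diagonal term, and Player~I's annihilator subspaces dispose of the rest coordinate by coordinate, but --- $Z$ being non-separable, the array spanning only the proper subspace $Y=c_0(X_k:k\kin\N)$ --- these per-coordinate estimates assemble into a genuine bounded-operator identity only over a suitable $Z_\Gamma$; this is exactly the obstruction that the hypothesis of uniform asymptotic curvedness (used elsewhere in the paper) would remove, so as to obtain the factorisation on all of $Z$. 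A subsidiary difficulty is the bookkeeping needed to land the constant at exactly $\lambda C^2K(\delta)$, which forces one to merge the collapse step with the coordinatewise diagonal factorisation rather than composing them naively.
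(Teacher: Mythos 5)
Your proposal has a genuine gap in its overall structure; the order in which you try to factor through a diagonal operator is reversed from what the hypotheses support, and this breaks the argument in two places.

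\emph{Boundedness of the diagonal of $T$.} In your ``coordinatewise factorisations'' step you form $\cD=\bigoplus_k D_k$ where $D_k$ is the diagonal of $P_kT|_{X_k}$ with respect to $(e_{(k,j)})_j$, and invoke the $K(\delta)$-diagonal factorization property for each $D_k$. But that property is, by \Cref{factorization definitions:basis}\eqref{enu:factorization definitions:basis:ii}, a statement about \emph{bounded} diagonal operators; it does not guarantee that the diagonal truncation of an arbitrary bounded operator is itself bounded. That implication does hold for unconditional bases (\Cref{rem:fact-prop}), but unconditionality is \emph{not} among the hypotheses of \Cref{T:2.5}. Your parenthetical ``(automatic for unconditional bases)'' is precisely the missing hypothesis. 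The paper avoids this entirely: the diagonal operator $D$ to which \Cref{L:2.8} is applied has entries $x^*_{(k,j)}(Tx_{(k,j)})$, i.e.\ it is built from the game output, and its boundedness follows \emph{a posteriori} from the estimate $\|BTA-D\colon Y\to Y\|\le 2\lambda\eta$ together with boundedness of $B,T,A$ (see the proof of \Cref{prop:main}\eqref{prop:a}), not from any unconditionality.

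\emph{What the game actually delivers, and the sign choice.} Your plan needs $VTU$ to be a small perturbation of the diagonal of $T$ (so it can be sandwiched between $\cB_0$ and $\cA_0$ to give the identity). What the game-theoretic construction produces, however, is that $BTA$ is a small perturbation of the diagonal operator $D$ whose $(k,j)$-entry is $x^*_{(k,j)}(Tx_{(k,j)})$ --- a quantity that in general bears no approximate equality to $e^*_{(k,j)}(Te_{(k,j)})$. Your attempt to force $\langle x^*_n,Tx_n\rangle$ close to a prescribed value via the $(P_kT-D_k)$-annihilator choices of $W_n,G_n$ cannot succeed: Player I fixes $M_n$ \emph{before} Player II chooses $E_n$, and Player II is free to pick indices in $E_n$ larger than $M_n$, so the annihilator conditions say nothing about $x^*_n(Te_{(k,i)})$ for $i\in E_n$. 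Relatedly, your deterministic sign choice $\vp^{(n)}_i=\sign d_{(k,i)}$ (which, by sign-coherence from the pregame split, is a constant sign on $E_n$) gives no control over the off-diagonal contribution $\sum_{i\neq j\in E_n}\lambda^{(n)}_i\mu^{(n)}_j\,e^*_{(k,j)}(Te_{(k,i)})$ to $x^*_n(Tx_n)$; this term could be large and negative, destroying the lower bound. The paper's proof uses a Rademacher averaging argument to show there \emph{exists} a sign vector making $|x^*_n(Tx_n)|>(1-\eta)\delta$, and only this lower bound --- not closeness to $d_{(k,i)}$ --- is what is needed, because the diagonal factorization lemma (\Cref{L:2.8}) is then applied to $D$, not to $\cD$.

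\emph{Summary of the fix.} Reverse the order: play the game first with the paper's choices of $W_n,G_n$ (annihilators of $\{x_m,Tx_m:m<n\}$ and $\{x^*_m,T^*x^*_m:m<n\}$ plus the first few basis vectors/functionals, no reference to $D_k$) and a probabilistic sign choice in Step~3; then let $D$ be the diagonal operator $De_{(k,j)}=x^*_{(k,j)}(Tx_{(k,j)})e_{(k,j)}$, apply \Cref{prop:main}\eqref{prop:a} (which includes the passage to $\Gamma$ via \Cref{L:2.9}, and which already interleaves the $\Omega_l$'s), and then \Cref{L:2.8} to factor through $D_\Gamma$. Your intuition about the role of the pregame sign split, the $\Omega_l$-interleaving, and the final constant bookkeeping is correct; the structural error is the attempt to factor through the (possibly unbounded) diagonal of $T$ itself.
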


\begin{remark}
  Note, that we did not simply state in \Cref{T:2.5} that there is an infinite $\Gamma$ so that the
  identity on $Z_\Gamma$ factors through $T$.  However, we show more: additionally we require that
  the intersection of $\Gamma$ with any prespecified sequence of infinite sets $\Omega_l\subset N$,
  $l\in\N$, also has to stay infinite.  This means that if an infinite number of elements of the
  spaces $X_n$ belong to a certain category, then also the spaces in an infinite subset of $\Gamma$
  will belong to that category.  In \Cref{prop:dense} we will provide an application of that
  additional condition on $\Gamma$.
\end{remark}

\begin{cor}\label{C:2.6} Assume that $X$ is a Banach space with a normalized  basis which is $C$-strategically reproducible
  and has the $K$-diagonal factorization property for some $K:(0,\infty)\to(0,\infty)$.  Define
  $e_{(k,j)}$ to be the $j$-th basis element of the $k$-th component in $\ell^\infty(X)$, for
  $k, j\kin\N$.

  Then the array $(e_{(k,j)})_{k,j}$ is simultaneously $C$-strategically reproducible in
  $Z=\ell^\infty(X)$, and the identity on $Z$ factors through every operator $T\colon Z\to Z$ with
  large diagonal.
\end{cor}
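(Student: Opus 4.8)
The plan is to obtain \Cref{C:2.6} as an immediate consequence of \Cref{T:2.5}, after checking its three hypotheses in the homogeneous case $X_k=X$ and then identifying the space $Z_\Gamma$ produced by that theorem with $Z$ itself.

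First I would record the relevant elementary facts about the array $(e_{(k,j)})_{k,j}$ in $\ell^\infty(X)$. For each fixed $k$, the sequence $(e_{(k,j)})_j$ is an isometric copy of the given normalized basis $(e_j)$ of $X$; in particular it has basis constant equal to the basis constant $\lambda$ of $(e_j)$, it is $C$-strategically reproducible in $X_k=X$, and it has the $K$-diagonal factorization property in $X_k=X$. The first of these gives hypothesis (i) of \Cref{T:2.5}, with the uniform constant $\lambda$, and the third gives hypothesis (ii). Moreover, since every $(e_{(k,j)})_j$ is $C$-strategically reproducible, \Cref{def:simul-strat-rep} says exactly that the array $(e_{(k,j)})_{k,j}$ is $C$-simultaneously strategically reproducible; by \Cref{rem:2.4} this is equivalent to Player II having a winning strategy in $\mathrm{Rep}_{Z,(e_{(k,j)})}(C,\eta)$ for every $\eta>0$, which is hypothesis (iii). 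This already proves the first assertion of the corollary.

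Next, let $T\colon Z\to Z$ be a bounded linear operator with large diagonal, and put $\delta=\inf_{k,j}\bigl|e^*_{(k,j)}(Te_{(k,j)})\bigr|>0$. Applying \Cref{T:2.5} --- with any admissible choice of the infinite sets $\Omega_l$, say $\Omega_l=\N$, since the refinement clause on $\Gamma$ plays no role here --- produces an infinite $\Gamma\subset\N$ and bounded operators $A\colon Z\to Z_\Gamma$, $B\colon Z_\Gamma\to Z$ with $\|A\|\,\|B\|\le\lambda C^2K(\delta)$ and $I_{Z_\Gamma}=ATB$. Since $X_k=X$ for all $k$ and $\Gamma$ is countably infinite, $Z_\Gamma=\ell^\infty(X_k:k\in\Gamma)$ is isometrically isomorphic to $Z=\ell^\infty(X)$ via an isometry $U\colon Z_\Gamma\to Z$ induced by a bijection of $\Gamma$ onto $\N$. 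Conjugating, we obtain
\[
  I_Z \;=\; U\, I_{Z_\Gamma}\, U^{-1} \;=\; U(ATB)U^{-1} \;=\; (UA)\,T\,(BU^{-1}),
\]
with $UA\colon Z\to Z$ and $BU^{-1}\colon Z\to Z$ bounded and $\|UA\|\,\|BU^{-1}\|\le\lambda C^2K(\delta)$. Thus the identity on $Z$ factors through $T$, which is the second assertion.

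I do not anticipate a genuine obstacle here, since all the substance is in \Cref{T:2.5}; the only point that requires a moment's care is the last step, namely that the factorization of $I_{Z_\Gamma}$ delivered by \Cref{T:2.5} has $Z$ as its intermediate space (so that it can be conjugated by $U$ and $U^{-1}$), and that the isometric identification $Z_\Gamma\cong Z$ genuinely uses $X_k=X$ for every $k$. Both are immediate from the definitions, so the corollary really is a formal consequence of the main theorem.
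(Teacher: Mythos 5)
Your proof is correct and is precisely the argument the paper intends (the paper does not write out a proof for this corollary, but the introduction explicitly signals that in the homogeneous case $X_k=X$ the solution of \Cref{prob:2} via \Cref{T:2.5} yields a solution of \Cref{prob:1}). The only substantive steps — checking hypotheses (i)--(iii) of \Cref{T:2.5} in the homogeneous setting and conjugating the resulting factorization of $I_{Z_\Gamma}$ by the isometry $Z_\Gamma\cong Z$ induced by a bijection $\Gamma\leftrightarrow\N$ — are exactly as you describe, and you even recover the quantitative bound $\lambda C^2K(\delta)$ which the corollary does not ask for.
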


The following result describes a situation where it is not necessary, like in \Cref{T:2.5}, to pass
to an infinite subset $\Gamma$ of $\N$.
\begin{thm}\label{T:2.7}
  Assume the array $(e_{(k,j)})$ is uniformly asymptotically curved (see~\eqref{eq:uac}), and
  furthermore, that there are $C,\lambda\ge 1$, and a map $K\colon (0,\infty)\to (0,\infty)$ so that
  \begin{enumerate}[(i)]
  \item the basis constant of $(e_{(k,j)})_j$, is at most $\lambda$ in $X_k$, for each $k\in\N$;
  \item $(e_{(k,j)})_j$ has the $K$-diagonal factorization property in $X_k$, for each $k\in\N$;
  \item the array $(e_{(k,j)})_{k,j}$ is simultaneously $C$-strategically reproducible in $Z$.
  \end{enumerate}
  
  Let $T\colon Z\to Z$ be bounded and linear, with
  \begin{equation*}
    \delta=\inf_{k,j\in\N} \big| e^*_{(k,j)} (Te_{(k,j)})\big| >0.
  \end{equation*}
  Then the identity on $Z$ $\lambda C^{2}K(\delta)$-factors through $T$.
\end{thm}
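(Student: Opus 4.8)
The plan is to derive Theorem~\ref{T:2.7} from Theorem~\ref{T:2.5} by showing that the uniform asymptotic curvedness of the array $(e_{(k,j)})$ lets us dispense with passing to an infinite subset $\Gamma$. More precisely, Theorem~\ref{T:2.5} already gives, for any fixed $\delta>0$ and any $T$ with $\delta$-large diagonal, an infinite $\Gamma\subset\N$ and operators $A,B\colon Z_\Gamma\to Z_\Gamma$ with $\|A\|\,\|B\|\le \lambda C^2 K(\delta)$ and $I_{Z_\Gamma}=A(T|_{Z_\Gamma})B$ (after also using coordinate projections to restrict $T$). The only thing missing is that $Z_\Gamma$ is a proper subspace of $Z$; so the work is to ``fill in'' the complementary coordinates $\N\setminus\Gamma$. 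The key point is that the same factorization scheme, when run \emph{simultaneously} over all $k$ using the simultaneous reproducibility game of \Cref{def:simul-strat-rep} and \Cref{rem:2.4}, already produces operators defined on all of $Z$, provided one can show that the averaging blocks built by Player~II's winning strategy can be chosen to live, for \emph{every} $k$ at once, inside the cofinite subspaces $W_n$ demanded by Player~I --- and this is exactly where uniform asymptotic curvedness enters.

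First I would recall the structure of the proof of Theorem~\ref{T:2.5}: one uses the large diagonal of $T$ together with the $K$-diagonal factorization property in each $X_k$ (condition (ii)) to reduce, on each component, to the diagonal of $T$; one then uses the strategic reproducibility of $(e_{(k,j)})_j$ (condition (iii), via the simultaneous game) to build, on each component $k$, vectors $x_{(k,j)}$ and functionals $x^*_{(k,j)}$ that are $(C+\eta)$-equivalent to the original basis and biorthogonals, and which lie (up to $\eta_n$) in the subspaces $W_n$, $G_n$ that encode where $T$ is ``small''. The operators $A$ and $B$ are then assembled from these block vectors and functionals: $B$ sends $e_n\mapsto x_n$ and $A$ extracts the $e_n$-coordinate via $x_n^*$ after applying $T$. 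In Theorem~\ref{T:2.5} the passage to $\Gamma$ is forced because, in general, Player~II cannot control the block averages uniformly in $k$: a block average $\frac1m\sum_{i} e_{(k,i)}$ need not be small in norm, so one can only guarantee the ``distance to $W_n$'' conditions along a subsequence of components. Under \eqref{eq:uac}, however, by \Cref{lem:asymtotically-curved:2} and the definition of uniform asymptotic curvedness, every block average $\frac1m\sum_{i=1}^m x_{(k,i)}$ of a normalized block basis has norm tending to $0$ \emph{uniformly in $k$}; hence such averages are automatically $\eta_n$-close to any fixed cofinite-dimensional subspace once $m$ is large (the distance of a vector of norm $\le\epsilon$ to any subspace is $\le\epsilon$). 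Therefore Player~II has a uniform (in $k$) winning strategy and can fulfil conditions \eqref{enu:simul-game:iii}--\eqref{enu:simul-game:iv} of \Cref{rem:2.4} for all $n=\nu(k,j)$ simultaneously, without ever discarding components.

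Concretely, the steps are: (1) fix $\delta$ as in the statement and $\vp>0$, choose $\eta>0$ small depending on $\delta,C,\lambda,\vp$; (2) invoke condition (ii) to pass, on each component $X_k$, from $T$ to its diagonal part at the cost of a factor $K(\delta)$, as in the proof of Theorem~\ref{T:2.3}; (3) run the game $\mathrm{Rep}_{Z,(e_{(k,j)})}(C,\eta)$, with Player~I's moves dictated by the data of $T$ (the $W_n$, $G_n$ chosen so that $\|T\|$ restricted there is controlled and so that $T$ is nearly diagonal on the relevant averaged blocks), and with Player~II following the winning strategy that exists by hypothesis (iii); (4) observe that, because the array is uniformly asymptotically curved, Player~II may additionally take the supports $E_n$ long enough that the resulting averaged block vectors $x_n$ (and, dually, $x_n^*$, using the lower estimate on block sequences of $(e_{(k,j)}^*)_j$ coming from \Cref{lem:asymtotically-curved:1} where applicable, or more directly the reproducibility game's built-in distance conditions) are $\eta_n$-close to $W_n$ and $G_n$ for every $n$; (5) define $B\overline z=\sum_n e_n^*(\overline z)\,x_n$ and $A\overline z=\sum_n x_n^*(T\overline z)\,e_n$ --- using \Cref{rem:diag-conv} and the product-topology convergence of \Cref{conv:1} to see these are well defined bounded operators on all of $Z$ --- and check $ATB=I_Z$ up to $\vp$ via the impartial $(C+\eta)$-equivalences \eqref{enu:simul-game:i}--\eqref{enu:simul-game:ii} and the near-diagonality, yielding $\|A\|\,\|B\|\le \lambda C^2 K(\delta)+\vp$; (6) let $\vp\to 0$.

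**The main obstacle** I expect is step (4): making the uniform-in-$k$ control rigorous at the level of the \emph{dual} objects $x_n^*$ and simultaneously verifying near-diagonality of $T$ on the averaged blocks for \emph{every} component at once. Uniform asymptotic curvedness is a statement about block bases of $(e_{(k,j)})_j$ in $X_k$, not directly about the coordinate functionals, so one must be careful: controlling $\dist(x_n,W_n)$ uniformly is immediate from \eqref{eq:uac}, but controlling $\dist(x_n^*,G_n)$ uniformly requires either that the dual averages also shrink (which would follow from a lower estimate on block bases of biorthogonals, cf.\ \Cref{lem:asymtotically-curved:1}) or a separate argument showing Player~II's dual choices can be made to respect $G_n$ — this is presumably handled inside the proof of Theorem~\ref{T:2.5} itself, so the real content of Theorem~\ref{T:2.7} is the bookkeeping showing that the ``$\Gamma$'' produced there can be taken to be all of $\N$. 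A secondary subtlety is that $Z$ is non-separable, so one must invoke \Cref{conv:1} and \Cref{rem:diag-conv} carefully to justify that $A,B$ are genuinely bounded linear operators on $Z$ (not merely on $Y=c_0(X_k)$) and that the series defining them converge in the appropriate sense; this is routine given the machinery already set up but is where the non-separability could bite if handled carelessly.
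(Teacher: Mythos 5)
Your high-level picture is right: run the simultaneous game, build $A,B$ on all of $Z$, and use uniform asymptotic curvedness to avoid the passage to $\Gamma$. But your step~(4) — the proposed mechanism by which \eqref{eq:uac} does this work — is incorrect, and it is also not what the paper does. You argue that Player~II can take the supports $E_n$ so long that the averaged block vectors $x_n$ become small in norm, hence automatically close to any cofinite-dimensional $W_n$. That cannot be the right mechanism: the game requires the sequence $(x_{(k,j)})_j$ to be impartially $(C+\eta)$-equivalent to the normalized basis $(e_{(k,j)})_j$, which forbids $\|x_n\|\to 0$. Moreover, the distance conditions $\dist(x_n,W_n)<\eta_n$ and $\dist(x_n^*,G_n)<\eta_n$ are already guaranteed by Player~II's winning strategy in \emph{both} theorems; they are not the thing that is missing in Theorem~\ref{T:2.5}.

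What actually forces the passage to $\Gamma$ in Theorem~\ref{T:2.5} is that conditions \eqref{prop:iv}--\eqref{prop:v} of \Cref{prop:main} control $|x_m^*(Tx_n)|$ only \emph{term by term}, and since a generic $\overline z=\sum_n a_n e_n\in Z$ is not norm-convergent, this yields a bound for $BTA-D$ on $Y$ but not on $Z$ (this is exactly assertion~\eqref{prop:a} of \Cref{prop:main}, which then invokes \Cref{L:2.9} to cut down to $\Gamma$). The role of uniform asymptotic curvedness in the paper is captured by \Cref{L:2.10}: for every $z^*\in Z^*$ and every $\eta>0$ there are tail subspaces $W_k=[e_{(k,j)}:j\ge m_k]$ with $\|z^*|_{\ell^\infty(W_k:k\in\N)}\|\le\eta$. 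Player~I applies this with $z^*=T^*x_j^*$, $j<n$, when choosing $W_n$ in \Cref{T:2.7}, producing the nested subspaces $V_l^{(n)}$ in \eqref{eq:V_l^n}--\eqref{eq:W_n-alt}. This gives the genuinely stronger condition~\eqref{prop:v-alt} of \Cref{prop:main}: $T^*x_m^*$ is small as a functional on the entire $\ell^\infty$-sum $\ell^\infty(W_k^{(m+1)}:k\in\N)$. Because the tail $\sum_{n>m}a_n x_n$ can be $\eta$-perturbed into an element of that $\ell^\infty$-sum, one can estimate $|x_m^*(T\sum_{n>m}a_nx_n)|$ directly, without expanding the infinite sum term by term; this is assertion~\eqref{prop:b} of \Cref{prop:main}, which gives a bound for $BTA-D$ on all of $Z$ and hence the factorization on $Z$ without any $\Gamma$. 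So the missing idea in your proposal is precisely \Cref{L:2.10} and its use in verifying condition~\eqref{prop:v-alt}: curvedness is applied to the \emph{dual} functionals $T^*x_m^*$ via Player~I's choice of $W_n$, not to the block vectors $x_n$ via Player~II's choice of $E_n$.

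One smaller point: your step~(2) (``pass from $T$ to its diagonal part on each component at cost $K(\delta)$'') misplaces where condition~(ii) enters. In the paper, the diagonal factorization property is invoked at the very end, via \Cref{L:2.8}, to factor the identity on $Z$ through the diagonal operator $D$ built in \Cref{prop:main}; it is not a preliminary reduction of $T$.
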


\section{Factorization through diagonal operators}
\label{sec:fact-thro-diag}

The main purpose of this section is to prove the pivotal \Cref{prop:main}.

\begin{lem}\label{L:2.9}
  Let $S\colon Z\to Z$ be a bounded operator, and let $(\Omega_k)$ denote a sequence of infinite
  subsets of $\mathbb{N}$.  For any $\rho>0$ there is an infinite $\Gamma\subset\N$ so that

  \begin{equation*}
    \Gamma\cap\Omega_k\ \text{is infinite for all $k\in\mathbb{N}$},
  \end{equation*}
  and
  \begin{equation*}
    \big\|P_\Gamma \circ S  |_{Z_\Gamma}\big\|\le 2\big\|S|_{Y}\big\|+\rho.
  \end{equation*}

\end{lem}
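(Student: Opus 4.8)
The plan is to build $\Gamma$ by a diagonal/interlacing argument so that on $Z_\Gamma$ the operator $S$ looks, up to the error $\rho$, like the restriction of $S$ to the $c_0$-sum $Y$, where $\|S|_Y\|$ is the relevant quantity. The key observation is that for $\overline x\in Z_\Gamma$ and a finite ``window'' $F\subset\Gamma$, the vector $P_F\overline x$ lies in $Y$ (indeed in a finite-dimensional subspace), so $\|S(P_F\overline x)\|\le\|S|_Y\|\,\|\overline x\|$; the difficulty is to control $P_\Gamma S$ applied to the \emph{tail} $P_{\Gamma\setminus F}\overline x$. The factor $2$ in the conclusion is exactly what one gains by splitting $\overline x\in Z_\Gamma$ into a ``head'' with finite support and a ``tail'' that is small in a suitable sense and absorbing the two pieces separately.

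First I would fix a summable sequence $(\rho_m)$ with $\sum_m\rho_m<\rho/(2\|S\|+1)$ (or similar), and enumerate $\mathbb N=\bigcup_k\Omega_k$'s contributions so that the final $\Gamma=\{\gamma_1<\gamma_2<\cdots\}$ will be chosen to meet every $\Omega_k$ infinitely often: at stage $m$ we commit to take the next element of $\Gamma$ from $\Omega_{k(m)}$, where $k(\cdot)$ is any function hitting each value infinitely often. Having chosen $\gamma_1<\cdots<\gamma_{m-1}$, I would choose $\gamma_m\in\Omega_{k(m)}$, $\gamma_m>\gamma_{m-1}$, large enough that two finite-rank smallness conditions hold. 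Namely: (a) for the finite-dimensional space $Z_{\{\gamma_1,\dots,\gamma_{m-1}\}}$, the image $S(Z_{\{\gamma_1,\dots,\gamma_{m-1}\}})$ is ``almost supported'' on the first few coordinates, so that $\|P_{[\gamma_m,\infty)}S\overline x\|\le\rho_m\|\overline x\|$ for all $\overline x$ in the unit ball of that finite-dimensional space — this uses that a fixed vector $Sz$ of $Z$ need not be in $Y$, so ``almost supported'' here means: approximated up to $\rho_m$ in norm by a vector supported on $[0,\gamma_m)$ would be \emph{false} in general; instead I would only use that $P_{[\gamma_m,\infty)}S$, restricted to that finite-dimensional ball and composed with the coordinate functionals $e^*_{(\kappa(n),\iota(n))}$ for $n$ in a prescribed finite set, is small — but what we actually need is cleaner, see the next paragraph. (b) Symmetrically, controlling the interaction of the \emph{new} block $Z_{\{\gamma_m\}}$ with the already-selected coordinates.

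The cleanest route, which I expect to carry the proof, is this: since $Y=c_0(X_k)$ and $\|S|_Y\|$ controls $S$ on all finitely-supported vectors, it suffices to show that for every $\overline x\in Z_\Gamma$ with $\|\overline x\|\le 1$ and every $k$, $\big|e^*_{(k,j)}(P_\Gamma S\overline x)\big|$, and more to the point $\|P_k P_\Gamma S\overline x\|$, is controlled. Split $\overline x=\overline x'+\overline x''$ where $\overline x'=P_{\{\gamma_1,\dots,\gamma_N\}}\overline x$ with $N$ chosen (depending on $k$) so that $P_k S$ annihilates $Z_{\{\gamma_{N+1},\dots\}}$ up to $\rho$: this is arranged at construction time because, having fixed $\gamma_1,\dots,\gamma_N$, we choose all later $\gamma_m$ so large that $\|P_{\{\gamma_1,\dots,\gamma_N\}}^{\perp\text{-side}}\|$... — concretely, at stage $m$ we ensure $\sup\{\|P_{\{\gamma_1,\dots,\gamma_{m-1}\}}\,S\,\overline y\| : \overline y\in Z_{[\gamma_m,\infty)\cap\Gamma\text{ (future)}},\ \|\overline y\|\le1\}\le\rho_m$, which is possible because $S$ maps the unit ball of $Z_{\{\gamma_m,\gamma_{m+1},\dots\}}$... here is the genuine subtlety and the main obstacle: $Z_{\{\gamma_m,\dots\}}$ is not separable and $S$ need not be compact, so ``the image is eventually small'' is not automatic. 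The resolution is that $P_{\{\gamma_1,\dots,\gamma_{m-1}\}}S$ has \emph{finite rank}, hence is weak$^*$-to-norm continuous on bounded sets in the relevant dual pairing, and a finite-rank operator is determined by finitely many functionals $f_1,\dots,f_r\in Z^*$; choosing $\gamma_m$ so that $f_1,\dots,f_r$ are each within $\rho_m/(\text{rank})$ of being supported on $[0,\gamma_m)$ is possible because any single $f\in Z^*$ satisfies $\lim_{M\to\infty}\|f|_{Z_{[M,\infty)}}\|$ ... again not zero in general. So instead I would use: $P_{\{\gamma_1,\dots,\gamma_{m-1}\}}S|_Y$ is what matters, and on $Y$ a finite-rank operator \emph{does} have the property that its defining functionals, restricted to $Y$, have vanishing tails — no, even that can fail.

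The correct and standard argument (and the one I would commit to writing) avoids all of this: one does \emph{not} try to kill tails of $S$; instead one uses that $Z_\Gamma$ itself, for a suitably sparse $\Gamma$, embeds into $Z$ in a way compatible with $Y$. Precisely, I claim: choose $\Gamma$ so sparse that for each $m$, $\|P_{\Gamma\cap[\gamma_m,\infty)}\,S\,|_{Z_{\Gamma\cap[0,\gamma_m)}}\| \le \rho_m$ and $\|P_{\Gamma\cap[0,\gamma_m)}\,S\,|_{Z_{\Gamma\cap[\gamma_m,\infty)}}\|\le\rho_m$ — the first is achievable because $S|_{Z_{\Gamma\cap[0,\gamma_m)}}$ is a fixed finite-rank operator and we are free to push $\gamma_m,\gamma_{m+1},\dots$ out, using that a finite-rank operator into $Z$ has image in a separable subspace, every element of which, being a norm-limit of finitely-supported vectors up to... — and here the honest statement is simply that a \emph{fixed finite set} of vectors in $Z$ can each be approximated up to $\rho_m$ by finitely-supported vectors is \textbf{false}, so we weaken to: up to $\rho_m$ by vectors supported on a fixed finite set \emph{union} $[0,M)$, which holds, and then $M$ bounds how far right we are forced to look — the second is the same with roles swapped and is immediate since $Z_{\Gamma\cap[\gamma_m,\infty)}\subset Y$-like... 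Given these, for $\overline x=\sum_{i}\overline x_i$ with $\overline x_i=P_{\{\gamma_i\}}\overline x$, one writes $P_\Gamma S\overline x$, groups terms by whether the output coordinate index is $<$ or $\ge$ the input coordinate index $\gamma_i$, sums a geometric-type series $\sum_i\rho_i\le\rho/(2\|S|_Y\|+\dots)$ for the off-diagonal part, and bounds the diagonal part — the part where output and input live in the same finite window — by $\|S|_Y\|\|\overline x\|$ since that window is finite-dimensional hence inside $Y$. The factor $2$ then comes from the two triangular halves (upper and lower) each contributing one copy of $\|S|_Y\|\|\overline x\|$, with all cross terms swept into $\rho$.

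The main obstacle, as the stammering above indicates, is exactly the non-separability: ``$\|P_{[M,\infty)}v\|\to 0$'' fails for $v\in Z\setminus Y$. The fix I would actually deploy: since $S$ is norm-continuous, $S|_Y$ determines $S$ on the weak$^*$-dense-in-each-window subspace, and for the \emph{estimate} it is enough to test $P_\Gamma S$ on finitely-supported $\overline x$ (dense in $Z_\Gamma$ for the product topology, and $P_\Gamma S$ is product-to-norm continuous on bounded sets precisely when... ) — cleanest: prove the inequality $\|P_\Gamma S\overline x\|\le(2\|S|_Y\|+\rho)\|\overline x\|$ first for finitely supported $\overline x\in Z_\Gamma$, where every piece genuinely lies in $Y$ and the triangular decomposition above goes through verbatim with the two halves giving the $2$; then extend to all of $Z_\Gamma$ by noting that for fixed $\overline x\in Z_\Gamma$ and fixed output coordinate $k$, $P_k P_\Gamma S\overline x$ depends (up to $\rho\|\overline x\|$, by the sparsity of $\Gamma$) only on finitely many input coordinates of $\overline x$, hence equals $P_kP_\Gamma S\overline x^{(\mathrm{fin})}$ up to $\rho\|\overline x\|$ for a finite truncation $\overline x^{(\mathrm{fin})}$, and take the supremum over $k$. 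So the honest hard step is the construction of $\Gamma$ making both triangular ``tails'' of $S$ small, one direction of which (output index $\ge$ input index) is a genuine finite-rank-approximation argument and the other of which is automatic. Everything else is bookkeeping with a summable sequence $(\rho_m)$.
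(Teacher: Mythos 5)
Your proposal does not land: after correctly diagnosing the central difficulty, it never resolves it, and at one point dismisses as ``immediate'' precisely the step where the real work happens.

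The genuine content of this lemma is that for a single functional of the form $x^*\circ P_l$ (with $x^*\in X_l^*$ fixed) and any infinite $\Lambda$ meeting every $\Omega_k$ infinitely, there is an infinite $\Lambda'\subset\Lambda$, still meeting every $\Omega_k$ infinitely, with $\|x^*\circ P_l\circ S\circ P_{\Lambda'}\|\le\rho/2$. This is a Rosenthal-type pigeonhole that exploits the $\ell^\infty$-norm: if it failed, one could split $\Lambda$ into $n$ infinite pieces and pick disjointly supported $\overline x_1,\dots,\overline x_n$ in $B_Z$ with $x^*(P_l S\overline x_j)\ge\rho/2$ for each $j$; since the $\overline x_j$ have disjoint supports, $\|\sum_j\overline x_j\|\le 1$, yet $x^*(P_l S\sum_j\overline x_j)\ge n\rho/2$, a contradiction for large $n$. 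Your proposal contains no version of this argument. Instead, your final plan wants to make both ``triangular'' pieces $\|P_{\Gamma\cap[0,\gamma_m)}\,S\,|_{Z_{\Gamma\cap[\gamma_m,\infty)}}\|$ and $\|P_{\Gamma\cap[\gamma_m,\infty)}\,S\,|_{Z_{\Gamma\cap[0,\gamma_m)}}\|$ small. The second one you honestly flag as problematic (and indeed it need not be achievable; the paper never needs it). The first one — bounded output index, input ranging over an infinite tail — you declare ``immediate,'' but it is exactly what the pigeonhole is for: the \emph{range} of $P_{\Gamma\cap[0,\gamma_m)}\circ S$ sits in $Y$, but that only gives the trivial bound $\|S\|$, not $\rho_m$, and certainly not $\|S|_Y\|$.

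The accounting for the factor $2$ is also inconsistent in your write-up. You say the cross (off-diagonal) terms get swept into $\rho$ and \emph{also} that the factor $2$ comes from ``the two triangular halves each contributing one copy of $\|S|_Y\|$''; these cannot both be true. In the paper's proof, the factor $2$ arises differently and nothing is made ``triangularly small.'' One first iterates the pigeonhole over a norming sequence $(y^*_j)\subset X_l^*$ to build nested infinite sets $\Lambda_1\supset\Lambda_2\supset\cdots$ with $\|y^*_j\circ P_l\circ S\circ P_{\Lambda_j}\|\le\rho/2$, then takes a diagonal $\Gamma'=\{\lambda_j\}$, $\lambda_j\in\Lambda_j$. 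For $\overline x\in B_{Z_{\Gamma'}}$, one splits into the head $P_{\{\lambda_1,\dots,\lambda_{j-1}\}}\overline x\in Y$ (bounded by $\|S|_Y\|$, no smallness claimed) and a tail supported in $\Lambda_j$ (controlled via the pigeonhole-cleaned functional $y^*_j$), obtaining $\|P_l\circ S|_{Z_{\Gamma'}}\|\le\|S|_Y\|+\rho$. A second diagonalization over output coordinates $l$ (repeating each $l$ infinitely often, and keeping the $\Omega_k$-intersections infinite throughout) produces $\Gamma=\{\gamma_j\}$ with nested $\Gamma_j$, and the final estimate splits $P_{\gamma_j}S|_{Z_\Gamma}$ into a finitely-supported-head piece $\le\|S|_Y\|$ and a tail piece $\le\|S|_Y\|+\rho$ (because the tail of $\Gamma$ past $\gamma_j$ sits inside $\Gamma_j$). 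That is where the $2\|S|_Y\|+\rho$ comes from: two genuine copies of $\|S|_Y\|$, one from the head and one from the tail bound, not from making both triangles small. In short, you have the right general shape (head/tail split, diagonal extraction, keeping $\Omega_k$-intersections alive) but are missing the one nontrivial lemma-within-the-lemma that makes the tail bound possible, and the factor-$2$ bookkeeping does not cohere.
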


\begin{proof} Let $l\in\N$.  We first observe that for a fixed $x^*\in X^*_l$, and for any infinite
  set $\Lambda\subset \N$, such that $\Lambda\cap\Omega_k$ is infinite for all $k\in\mathbb{N}$,
  there is an infinite $\Lambda'\subset\Lambda$ so that $\Lambda'\cap\Omega_k$ is infinite, for all
  $k\in\mathbb{N}$, and $\|x^*\circ P_l\circ S\circ P_{\Lambda'}\|\le \rho/2$.  Indeed, if that were
  not true we could choose for any $n\in\N$ a partition of $\Lambda$ into $n$ infinite subsets
  $\Lambda_1,\Lambda_2,\ldots \Lambda_n$ such that $\Lambda_j\cap\Omega_k$ is infinite for all
  $1\leq j\leq n$, $k\in\mathbb{N}$ and find
  $\overline{x}_{1},\overline{x}_{2}, \ldots \overline{x}_{n}$ in $Z$ with
  $\|\overline{x}_{j}\|\le 1$, $\supp(\overline{x}_{j})\subset \Lambda_j$ and
  $x^* \big(P_l\circ S(\overline{x}_{j})\big)\ge \rho/2$.  But then we would have for
  $\overline{x}=\sum_{j=1}^n \overline{x}_{j}$ that $\|\overline{x}\|\le 1$ and
  $x^* \big(P_l\circ S(\overline{x})\big)\ge n\rho/2$, which is impossible assuming that $n$ is
  chosen large enough.
 
  We now choose a sequence $(y^*_j:j\in\N)$ in $X_l^*$ with $\|y_j^*\|=1$ which norms the elements
  of $X_l$.  Applying our above observation we can choose infinite sets
  $\Lambda_{j+1}\subset\Lambda_j\subset\Lambda$ such that for all $k,j\in\mathbb{N}$
  \begin{equation*}
    \Lambda_j\cap\Omega_k\ \text{is infinite}
    \qquad\text{and}\qquad
    \|y^*_j \circ P_l\circ S\circ P_{\Lambda_j}\|\le \rho/2.
  \end{equation*}
  Then we choose $\Gamma'=\{\lambda_j:j\in\N\}$, where $ \lambda_j\in \Lambda_j$ and
  $\lambda_j<\lambda_{j+1}$, for all $j\in\N$ such that $\Gamma'\cap\Omega_k$ is infinite for all
  $k\in\mathbb{N}$.

  Let $\xb=(x_k)\in Z$, $\|\xb\|\le1 $ and choose $j\in\N$, so that
  \begin{equation*}
    y^*_j\big(P_l\circ S\circ P_{\Gamma'} (\xb)\big)>\frac12\|P_l\circ S\circ P_{\Gamma'}(\xb)\|.
  \end{equation*}

  Then
  \begin{align*}
    \big\|P_l\circ S\circ P_{\Gamma'}(\xb)\big\|
    &\le \big\|P_l\circ S\circ P_{\{\lambda_1,\lambda_2,\ldots ,\lambda_{j-1}\}}(\xb)\big\|
      + \big\|P_l\circ S\circ P_{ \Lambda_j}(\xb)\big\|\\
    &\le\|P_l\circ S |_{Y}\big\| + 2y^*_j(P_l\circ S\circ P_{\Lambda_j} (\xb))
      \le \| S |_{Y}\big\| +\rho.
  \end{align*}
  Thus, we deduce that
  \begin{equation*}
    \big\|P_l\circ S|_{Z_{\Gamma'}}\|\le \| S |_{Y}\big\| +\rho.
  \end{equation*}
 
  Let $(k_j)\subset \N$ be a sequence in which every $k\kin \N$ appears infinitely often.  Starting
  by letting $\Gamma_0=\N$ and $\gamma_1=1$, we can apply our observation and recursively choose
  infinite sets $\Gamma_0\supset \Gamma_1\supset \Gamma_2\supset\ldots $, and
  $\gamma_1<\gamma_2<\ldots $ so that
  \begin{equation*}
    \Gamma_j\cap\Omega_k\ \text{is infinite for all $j,k\in\mathbb{N}$,}\, \gamma_j\in \Gamma_{j-1}\cap \Omega_{k_j}
    \text{ and }
    \big\|P_{\gamma_j}\circ S|_{Z_{\Gamma_j}}\|
    \leq \| S |_{Y}\big\| +\rho.
  \end{equation*}
  
  Finally, letting $\Gamma=\{\gamma_j: j\in\N\}$, we deduce that
  \begin{align*}
    \|P_\Gamma S|_{Z_{\Gamma}}\|
    &= \sup_{j\in\N} \|P_{\gamma_j} S|_{Z_{\Gamma}}\|\\
    &\le \sup_{j\in\N} \Big(
      \big\|P_{\gamma_j} S|_{Z_{\{\gamma_1,\gamma_2,\ldots\gamma_j\}}}\big\|
      + \big\|P_{\gamma_j} S|_{Z_{\Gamma_j}}\big\|
      \Big)
      \le 2\big\| S|_{Y}\big\|
      +\rho,
  \end{align*}
  which proves our claim.
\end{proof}

\begin{lem}\label{L:2.10}
  We assume that the array $(e_{(k,j)})_{k,j}$ is uniformly asymptotically curved.  Let
  $z^*\kin Z^*$ and $\eta>0$.  Then there exists $(m_k)\subset\N$ so that for every $w=(w_k)\in Z$,
  $\|w\|\le 1$ with $w_k\in[e_{(k,j)}: j\ge m_k]$, $k\in\N$, it follows that $|z^*(w)|\le\eta $.  In
  other words, letting $W_k=[e_{(k,j)}:j\ge m_k]$ it follows that
  \begin{equation*}
    \|z^*|_{\ell^\infty(W_k:k\in\mathbb{N})}\|\le \eta.
  \end{equation*}
\end{lem}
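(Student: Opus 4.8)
I would prove this by contradiction, the engine being a convexity/averaging argument played against the asymptotic curvedness. Suppose no such $(m_k)$ exists; then, negating the statement, for \emph{every} sequence $(m_k)\subset\N$ there is $\overline w=(w_k)\in Z$ with $\|\overline w\|\le1$, $w_k\in[e_{(k,j)}:j\ge m_k]$ for all $k$, and $|z^*(\overline w)|>\eta$; we may also assume $\|z^*\|>0$, since otherwise any $(m_k)$ works. Using this repeatedly I would manufacture ``bad'' vectors living on successive, pairwise-disjoint blocks of the array: set $M_{0,k}=0$; having constructed $(M_{l-1,k})_k$, apply the hypothesis to $m_k=M_{l-1,k}+1$ to get $\overline w_l=(w_{l,k})_k$ with $\|\overline w_l\|\le1$, $w_{l,k}\in[e_{(k,j)}:j>M_{l-1,k}]$, and $|z^*(\overline w_l)|>\eta$; then, with $\vp=\eta/(4\|z^*\|)$, approximate each $w_{l,k}$ within $\vp 2^{-k}$ in $X_k$-norm by a \emph{nonzero} finite combination $v_{l,k}\in\spa\{e_{(k,j)}:j>M_{l-1,k}\}$ with $\|v_{l,k}\|_{X_k}\le 1+\vp$, and finally choose $M_{l,k}>M_{l-1,k}$ so large that $\supp(v_{l,k})\subset\{e_{(k,j)}:M_{l-1,k}<j\le M_{l,k}\}$. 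Then $\overline v_l=(v_{l,k})_k\in Z$ satisfies $\|\overline v_l-\overline w_l\|\le\vp$, so $|z^*(\overline v_l)|>\eta/2$, and after multiplying $\overline v_l$ by a unimodular scalar I may assume $z^*(\overline v_l)>\eta/2$. By construction, for every fixed $k$ the sequence $(v_{l,k})_l$ is a block basis of $(e_{(k,j)})_j$ with $\sup_l\|v_{l,k}\|_{X_k}\le 1+\vp$.

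The contradiction would then come from the partial sums $\overline V_n=\sum_{l=1}^n\overline v_l\in Z$. Being a \emph{finite} sum, $\overline V_n$ satisfies $z^*(\overline V_n)=\sum_{l=1}^n z^*(\overline v_l)>n\eta/2$, hence $\|\overline V_n\|>n\eta/(2\|z^*\|)$. On the other hand $\|\overline V_n\|=\sup_k\|\sum_{l=1}^n v_{l,k}\|_{X_k}$, and uniform asymptotic curvedness of the array, applied to the block bases $(v_{l,k})_l$ ($k\in\N$) simultaneously, forces $\rho_n:=\sup_k\|\tfrac1n\sum_{l=1}^n v_{l,k}\|_{X_k}\to0$, so $\|\overline V_n\|\le n\rho_n$. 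Comparing the two estimates gives $\rho_n\ge\eta/(2\|z^*\|)$ for all $n$, contradicting $\rho_n\to0$; this finishes the argument.

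The one step I expect to require care is the averaging estimate $\rho_n\to0$. The definition~\eqref{eq:uac} of uniform asymptotic curvedness is phrased for \emph{normalized} block bases, while the $v_{l,k}$ are only norm-bounded and some may be small, so one first needs its norm-bounded version: drop the vanishing terms, normalize the rest, and dominate $\tfrac1n\sum_{l=1}^n v_{l,k}$ by a layer-cake decomposition into sub-averages $\tfrac{|A|}{n}\cdot\tfrac1{|A|}\sum_{l\in A}\big(v_{l,k}/\|v_{l,k}\|\big)$ of normalized blocks, each controlled via~\eqref{eq:uac}, and then pass to a bound $\rho_n$ uniform in $k$ by a diagonalization (using in particular that each individual $X_k$ is asymptotically curved). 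It is worth stressing why the global averaging is unavoidable: choosing $m_k$ coordinatewise so that $z^*|_{X_k}$ is small on $[e_{(k,j)}:j\ge m_k]$ would control only $z^*|_Y$, but $z^*$ may carry a ``singular'' part vanishing on $Y=c_0(X_k:k\in\N)$ yet not on $Z$, whereas the single vector $\overline V_n$ is felt by all of $z^*$ at once; this is also exactly the place where the curvedness hypothesis is essential (without it, e.g.\ for $X_k=\ell^1$, the statement is false).
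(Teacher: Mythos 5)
Your proof is essentially the same as the paper's: negate the statement, inductively build a bounded block array $(w^{(l)}_k)_k$ with $|z^*((w^{(l)}_k)_k)|>\eta/2$, and then apply $z^*$ to the Ces\`aro mean $\tfrac1n\sum_{l\le n}(w^{(l)}_k)_k$ and play it against uniform asymptotic curvedness to get a contradiction. The extra care you take --- the $\vp 2^{-k}$ truncation to finitely supported $v_{l,k}$ (since the negation only produces vectors in the infinite-dimensional tails $[e_{(k,j)}:j>M_{l-1,k}]$), and the passage from normalized to merely norm-bounded block sequences in applying~\eqref{eq:uac} --- fills in two details the paper's proof leaves implicit, but does not change the route.
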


\begin{proof} Assume that our claim is not true for some $z^*\in Z^*$ and $\eta>0$.  Then we can
  choose inductively for every $n\in\N$, sequences $(m^{(n)}_k)_{k\in\N}\subset \N$, and
  $\overline{z}_{n} =(w^{(n)}_k)_k\in B_Z$, so that
  \begin{align}
    \label{E:2.10.1} &m^{(n-1)}_k< m^{(n)}_k\text{ for all $k\in\N$ (with $m^{(0)}_k=0$)},\\
    \label{E:2.10.2}  &w^{(n)}_k\in[e_{(k,j)}:m^{(n-1)}_k< j\le m^{(n)}_k],\text{ for all $k\in\N$, and}\\
    \label{E:2.10.3} &z^*( (w^{(n)}_k)_k)>\eta/2.
  \end{align}
  Then for $n\in\N$ define
  \begin{equation*}
    \overline{u}_{n}= \Big(\frac{1}{n}\sum_{m=1}^n w^{(m)}_k:k\in\N\Big)\in Z,
  \end{equation*}
  It follows from our assumption on the spaces $X_k$ that $\lim_n\|\overline{u}_{n}\|_Z=0$, but on
  the other hand we have
  \begin{equation*}
    z^*\big(\overline{u}_{n}\big)
    = \frac{1}{n}\sum_{m=1}^n z^*\bigl((w^{(n)}_k)_k\bigr)
    > \eta/2
  \end{equation*}
  which for large enough $n$ leads to a contradiction.
\end{proof}

Let $(e_j)$ denote a basic sequence in a Banach space $X$.  We say that a sequence $(x_j)$ in $X$ is
a \emph{perturbation of a block basic sequence of $(e_j)$} if there exists a block basis sequence
$(\widetilde x_j)$ of $(e_j)$ such that $\sum_{j=1}^\infty\|x_{j} - \widetilde x_{j}\|_X < \infty$.
\begin{notation}\label{ntn:basic-operators}
  Let $\lambda\ge 1$ and assume the basis constant of $(e_{(k,j)})_j$, is not larger than $\lambda$,
  for all $k\in\mathbb{N}$.  Assume that for each $k\in\mathbb{N}$, $(x_{(k,j)})_j$ is a sequence in
  $X_k$ and that $(x_{(k,j)}^*)_j$ is a perturbation of block basic sequence of $(e_{(k,j)})_j$ in
  $X_k^*$.  Moreover, assume that
  \begin{enumerate}[(i)]
  \item $(x_{(k,j)})_j$ and $(e_{(k,j)})_j$ are impartially $C$-equivalent, for all $k\in\N$;
  \item $(x^*_{(k,j)})_j$ and $(e^*_{(k,j)})_j$ are impartially $C$-equivalent, for all $k\in\N$;
  \item $1-\eta< x^*_{(k,i)}(x_{(k,i)}) < 1+\eta$, for all $k,i\in\N$.
  \end{enumerate}
  Then for each $k,j\kin\N$, we define
  \begin{align*}
    &A_k\colon X_k\to X_k,
    &A_ke_{(k,j)}
    &= x_{(k,j)},\\
    &B_k\colon X_k\to X_k,
    &B_k x
    &= \sum_{j=1}^\infty x_{(k,j)}^*(x)e_{(k,j)}.
  \end{align*}
  and their respective vector operator version
  \begin{align*}
    &A\colon Z\to Z,
    &A\big((z_{k})_k\big)
    &=\big(A_kz_{k}\big)_k,\\
    &B\colon Z\to Z,
    &B\big((z_{k})_k\big)
    &=\big(B_kz_{k}\big)_k.
  \end{align*}
\end{notation}

\begin{remark}\label{remark:basic-operators}
  In view of our hypothesis, the operators $A_k$, $B_k$, $k\in\mathbb{N}$ and consequently $A$, $B$
  in \Cref{ntn:basic-operators} are well defined and satisfy $\|A\|\leq \sqrt C$ and
  $\|B\|\leq \lambda C$ (see~\cite[Lemma~3.7,
  Lemma~3.14]{lechner:motakis:mueller:schlumprecht:2018}).  Moreover, for each $k,i\in\N$ we have
  \begin{equation}\label{eq:basic-operators:1}
    BTA(e_{(k,i)})
    = \Big(\sum_{j=1}^\infty x_{(l,j)}^*(Tx_{(k,i)})e_{(l,j)}:l\in\N\Big)\in Z
  \end{equation}
  it follows (infinite sums are meant to converge with respect to the topology $\mathcal{P}$,
  introduced in Convention \ref{conv:1}) for an $x=\sum_{n=1}^\infty a_n e_n\in Z$ that
  \begin{equation}\label{eq:basic-operators:2}
    \begin{aligned}
      BTA\Bigl(\sum_{n=1}^\infty a_n e_n\Bigr)
      &=\sum_{m=1}^\infty x^*_m\Big(TA\sum_{n=1}^\infty a_n e_n\Big)e_m\\
      &= \sum_{m=1}^\infty \sum_{n=1}^{m-1} a_n x_m^*(Tx_n) e_m
      + \sum_{m=1}^\infty a_m x_m^*(T x_m) e_m\\
      &\qquad\qquad+ \sum_{m=1}^\infty x_m^*\bigl( T \sum_{n=m+1}^\infty a_n x_n \bigr) e_m.
    \end{aligned}
  \end{equation}
  (Note that $T$ might not be continuous with respect to $\mathcal P$, we only used the linearity of
  $T$).  If $x= \sum_{n=1}^{\infty} a_n e_n\in Y$ (which implies that this series is norm
  convergent) then
  \begin{equation}\label{eq:basic-operators:3} BTA\Bigl(\sum_{n=1}^\infty a_n e_n\Bigr)
    =\sum_{m=1}^\infty  \sum_{n=1}^\infty a_n x_m^*\bigl( Tx_n \bigr) e_m. \end{equation}
\end{remark}

We now formulate and prove a rather technical proposition which presents the heart of the proof of
\Cref{T:2.5} and \Cref{T:2.7}.
\begin{prop}\label{prop:main}
  Assume that for some $\lambda\ge 1$ the basis constant of $(e_{(k,j)})_j$, is not larger than
  $\lambda$.  Let $T\colon Z\to Z$ be a bounded linear operator, for each $k\kin\N$ let
  $(x_{(k,j)})_j$ be a sequence in $X_k$ and let $(x^*_{(k,j)})_j$ be a perturbation of a block
  basis of $(e_{(k,j)}^*)_j$ in $X_k^*$.
 
  Let $0 < \eta\leq 1$, $C\ge 1$ and $(\eta_n)\subset(0,1]$ so that
  $\sum_{m=1}^\infty \sum_{n=m+1}^\infty \eta_n<\eta$.  Consider the following conditions:
  \begin{enumerate}[(i)]
  \item\label{prop:i} $(x_{(k,j)})_j$ and $(e_{(k,j)})_j$ are impartially $C$-equivalent, for all
    $k\in\N$;
  \item\label{prop:ii} $(x^*_{(k,j)})_j$ and $(e^*_{(k,j)})_j$ are impartially $C$-equivalent, for
    all $k\in\N$;
  \item\label{prop:iii} $1-\eta< x^*_n(x_n)<1+\eta$, $n\in\mathbb{N}$;
  \item\label{prop:iv} $\sum_{n=1}^{m-1} |x^*_m(Tx_n)| < \eta_m$, for all $m\in\mathbb{N}$;
  \item\label{prop:v} $\sum_{n=m+1}^\infty |x^*_m(Tx_n)| < \eta_m$, for all $m\in\mathbb{N}$;
  \end{enumerate}

  In order to formulate the last condition, we assume that for each $n\in\mathbb{N}$, we are given a
  sequence $(W^{(n)}_k)_k$, where $W^{(n)}_k$ is a cofinite dimensional subspace of $X_k$, with
  $W^{(n+1)}_k\subset W_k^{(n)}$ for $k,n\in\mathbb{N}$.
  \begin{enumerate}[(i)]
    \setcounter{enumi}{5}
  \item\label{prop:v-alt} For each $n\in\mathbb{N}$, assume that
    \begin{equation*}
      \|T^*x_{n}^*|_{\ell^\infty(W_k^{(n+1)} : k\in\N)}\| < \eta_{n}
      \qquad\text{and}\qquad
      \dist\big(x_n, W_{\kappa(n)}^{(n)}\big) < \eta_n,
    \end{equation*}
    for all $n\in\mathbb{N}$.
  \end{enumerate}
  Let $D\colon Z \to Z$ denote the diagonal operator given by
  \begin{equation*}
    De_{(k,j)}
    = x_{(k,j)}^*(Tx_{(k,j)})e_{(k,j)},
    \qquad \text{for all}\ k,j\in\mathbb{N}.
  \end{equation*}
  Then the following assertions~\eqref{prop:a} and~\eqref{prop:b} hold true.
  \begin{enumerate}[(a)]
  \item\label{prop:a} If~\eqref{prop:i}--\eqref{prop:v} is satisfied, then $BTA - D\colon Y\to Y$ is
    well defined and
    \begin{equation*}
      \|BTA - D: Y\to Y\|
      \leq 2\lambda\eta.
    \end{equation*}
    $D$ is a bounded operator from $Z$ to $Z$ and thus also $D-BTA$.  Moreover, for each sequence of
    infinite subsets $(\Omega_l)$ of $\mathbb{N}$, there exists an infinite set
    $\Gamma\subset\mathbb{N}$ such that
    \begin{equation*}
      \Gamma\cap\Omega_l\ \text{is infinite, for all $l\in\mathbb{N}$,}
      \text{ and }
      \|P_\Gamma D - P_\Gamma BTA: Z_\Gamma\to Z_\Gamma\| < 5\lambda\eta.
    \end{equation*}
    If we additionally assume that $K\geq 1$ is such that the identity $K$-factors through
    $P_\Gamma D|_{Z_\Gamma}$ and $\eta < 1/(5\lambda K)$, then the identity on $Z_\Gamma$
    $\Big(\frac{\lambda KC^{2}}{1-5\lambda K\eta}\Big)$-factors through $T$.

  \item\label{prop:b} If alternatively, \eqref{prop:i}--\eqref{prop:iv} and~\eqref{prop:v-alt} are
    satisfied, then
    \begin{equation*}
      \|BTA - D\colon Z\to Z\| < 2\lambda \sqrt{C} (3 + \|T\|).
    \end{equation*}
    If we additionally assume that $K\geq 1$ is such that the identity on $Z$ $K$-factors through
    $D$ and $\eta < 1/(2\lambda \sqrt{C} (3 + \|T\|)K)$ then the identity on $Z$
    $\Big(\frac{\lambda KC^{2}}{1 - 2\lambda \sqrt C (3+\|T\|)K\eta}\Big)$-factors through $T$.
  \end{enumerate}
\end{prop}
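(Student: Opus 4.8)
The plan is to prove both assertions along the same lines: first show that, under the relevant hypotheses, the operator $BTA$ differs from the diagonal operator $D$ by an operator of small norm, and then run a Neumann-series perturbation argument that upgrades a factorization of the identity through $D$ to a factorization through $T$. Throughout I would use the operators $A,B$ of \Cref{ntn:basic-operators}, the norm bounds on them recorded in \Cref{remark:basic-operators}, and the expansion formulas~\eqref{eq:basic-operators:2} and~\eqref{eq:basic-operators:3}.

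For assertion~\eqref{prop:a} I would first work on $Y$, where everything is a norm-convergent series: for $\bar z=\sum_na_ne_n\in Y$ the series $\sum_na_nx_n$ converges in norm by \Cref{rem:diag-conv}, so~\eqref{eq:basic-operators:3} shows that the $e_m$-coefficient of $(BTA-D)\bar z$ is exactly the off-diagonal sum $\sum_{n\ne m}a_nx_m^*(Tx_n)$. Bounding $|a_n|\le 2\lambda\|\bar z\|$ and invoking~\eqref{prop:iv} on the part $n<m$ and~\eqref{prop:v} on the part $n>m$, this coefficient is $\lesssim\lambda\|\bar z\|\,\eta_m$; summing the resulting bounds over each column $\{n:\kappa(n)=k\}$ and taking the supremum over $k$ (this is where the smallness of $\sum_m\sum_{n>m}\eta_n$ enters) gives $\|BTA-D\colon Y\to Y\|\le 2\lambda\eta$, and at the same time shows these coefficients are absolutely summable, so that $(BTA-D)\bar z\in Y$. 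Applying the same coefficient estimate inside each $X_k$ to the ``$k$-to-$k$ block'' $B_k\,(P_kT|_{X_k})\,A_k$ shows its off-diagonal part has norm bounded uniformly in $k$; since $B_k\,(P_kT|_{X_k})\,A_k$ is itself bounded uniformly in $k$, so is its diagonal $D_k$, whence $D$ — and therefore $D-BTA$ — is a bounded operator on all of $Z$.

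Next I would apply \Cref{L:2.9} to $S=D-BTA$ with $\rho=\lambda\eta$, producing an infinite $\Gamma\subset\N$ with $\Gamma\cap\Omega_l$ infinite for every $l$ and $\|P_\Gamma D-P_\Gamma BTA\colon Z_\Gamma\to Z_\Gamma\|\le 2\|(D-BTA)|_Y\|+\rho<5\lambda\eta$. If $I_{Z_\Gamma}=\beta\,(P_\Gamma D|_{Z_\Gamma})\,\alpha$ with $\|\alpha\|\,\|\beta\|\le K$, then $\beta\,(P_\Gamma BTA)\,\alpha=I_{Z_\Gamma}-\beta P_\Gamma(D-BTA)\alpha$ differs from $I_{Z_\Gamma}$ by an operator of norm $<5\lambda K\eta<1$, hence is invertible with inverse of norm $\le(1-5\lambda K\eta)^{-1}$; writing $I_{Z_\Gamma}=\big[(\beta P_\Gamma BTA\alpha)^{-1}\beta P_\Gamma B\big]\,T\,[A\alpha]$ and bounding the norms of the factors then yields the factorization constant $\lambda C^{2}K/(1-5\lambda K\eta)$.

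For assertion~\eqref{prop:b} the extra difficulty — which I expect to be the crux of the proof — is that one can no longer reduce to $Y$: on all of $Z$ one must use~\eqref{eq:basic-operators:2} in its genuine three-term form, and its upper-triangular term $\sum_m x_m^*\big(T\sum_{n>m}a_nx_n\big)e_m$ is \emph{not} the convergent series $\sum_{n>m}a_nx_m^*(Tx_n)e_m$, since $T$ need not be continuous for the product topology $\mathcal P$. Condition~\eqref{prop:v-alt} is what replaces~\eqref{prop:v} here: because $W^{(n)}_{\kappa(n)}\subset W^{(m+1)}_{\kappa(n)}$ for $n>m$, one gets $\dist(x_n,W^{(m+1)}_{\kappa(n)})<\eta_n$, so the element $\sum_{n>m}a_nx_n\in Z$ lies within $\lesssim\lambda\|\bar z\|\sum_{n>m}\eta_n$ of $\ell^\infty(W^{(m+1)}_k:k\in\N)$; then the bound $\|T^*x_m^*|_{\ell^\infty(W^{(m+1)}_k:k)}\|<\eta_m$ on that part, combined with the crude bound $\|T^*x_m^*\|\lesssim\sqrt C\,\|T\|$ on the small error, controls the $e_m$-coefficient of the upper-triangular term. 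Together with the lower-triangular estimate from~\eqref{prop:iv} and a summation over $m$ using $\sum_m\sum_{n>m}\eta_n<\eta$, this gives $\|BTA-D\colon Z\to Z\|\lesssim\lambda\sqrt C(3+\|T\|)\,\eta$. Finally, exactly as in assertion~\eqref{prop:a} but directly on $Z$ — using that $I_Z$ $K$-factors through $D$ and that $\eta$ is small enough that $\|BTA-D\|\,K<1$ — a Neumann series produces the factorization of $I_Z$ through $T$ with the asserted constant.
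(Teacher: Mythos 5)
Your proposal follows the paper's proof essentially step for step: decompose $BTA-D$ into lower/upper-triangular parts via the expansion formulas, control the lower-triangular part with~\eqref{prop:iv}, use~\eqref{prop:v} on $Y$ for part~\eqref{prop:a} and the perturbation-into-$\ell^\infty(W_k^{(m+1)})$ argument driven by~\eqref{prop:v-alt} for part~\eqref{prop:b}, invoke \Cref{L:2.9} to find $\Gamma$, and close with the Neumann-series factorization. You also correctly identify the crux of part~\eqref{prop:b} — that the upper-triangular term is not a $\mathcal P$-convergent series because $T$ need not be $\mathcal P$-continuous, which is exactly why~\eqref{prop:v-alt} replaces~\eqref{prop:v} — so this is the same argument as the paper's.
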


\begin{proof}
  Naturally, the proof splits into two parts.
  \begin{proofstep}[Proof of~\eqref{prop:a}]
    Let $y = \sum_{n=1}^\infty a_n e_n\in Y$ be a norm convergent series and observe that
    by~\eqref{eq:basic-operators:3}
    \begin{equation}\label{eq:BTA-D:estimate}
      (BTA-D)y
      = \sum_{m=1}^\infty \sum_{n:n\neq m} a_n x_m^*(T x_n) e_m.
    \end{equation}
    Hence, for each $k\in\mathbb{N}$ we obtain
    \begin{equation*}
      P_k(BTA-D)y
      = \sum_{m:\kappa(m)=k} \sum_{n:n\neq m} a_n x_m^*(T x_n) e_m,
    \end{equation*}
    and thus
    \begin{equation}\label{eq:pk-estimate}
      \begin{aligned}
        \|P_k(BTA-D)y\|_{X_k}
        &\leq \sum_{n=1}^\infty |a_n| \sum_{m:\kappa(m)=k}  |x_m^*(T x_n)|\\
        &\leq 2\lambda \|y\|_Y \sum_{n=1}^\infty \sum_{m:\kappa(m)=k} |x_m^*(T x_n)|.
      \end{aligned}
    \end{equation}
    By~\eqref{prop:iv} and~\eqref{prop:v} we have $\sum_{m,n:m\neq n} |x_m^*(T x_n)| < \infty$, and
    hence
    \begin{equation*}
      \lim_{k\to \infty} \sum_{n=1}^\infty \sum_{m:\kappa(m)=k} |x_m^*(T x_n)|
      = 0.
    \end{equation*}
    Together with estimate~\eqref{eq:pk-estimate} we obtain $(BTA-D)y\in Y$.

    Using~\eqref{prop:iv} and \eqref{prop:v}, ~\eqref{eq:BTA-D:estimate} yields
    \begin{align*}
      \|(BTA-D)y\|_Y
      &\leq \sum_{m=1}^\infty \sum_{n=1}^{m-1} |a_n| |x_m^*(T x_n)|
        + \sum_{m=1}^\infty \sum_{n=m+1}^N  |a_n| |x_m^*( T x_n )|
        \leq 2 \lambda \eta \|y\|_Y.
    \end{align*}
    We conclude
    \begin{equation*}
      \|BTA-D\colon Y\to Y\|
      \leq 2 \lambda \eta.
    \end{equation*}

    Next, we observe that since $D$ is a diagonal operator, which means that $D$ is of the form
    \begin{equation*}
      D\colon Y\to Y, \quad (x_k)\mapsto (D_kx_k),
    \end{equation*}
    where the operators $D_k\colon X_k\to X_k$, $k\in\N$, are uniformly bounded.  Thus, $D$ is also
    a bounded operator on $Z$ with norm $\sup_k \|D_k\colon X_k\to X_k\|$ and the operator $BTA-D$
    is a well defined and a bounded operator on all of $Z$.  Our conclusion follows therefore from
    \Cref{L:2.9} for some infinite set $\Gamma\subset\N$.

    For the additional part, assume that $\hat B\colon Z_\Gamma \to Z_\Gamma$ and
    $\hat A\colon Z_\Gamma\to Z_\Gamma $ are such that $\|\hat B\|\|\hat A\| \leq K$ and
    $I = \hat BD_\Gamma\hat A$.  It follows that
    $\|I - \hat BBTA\hat A\|= \|\hat B(D_\Gamma-BTA)\hat A\| < 5\lambda K\eta < 1$.  Hence, the map
    $Q = \hat BBTA\hat A$ is invertible with $\|Q^{-1}\| \leq 1/(1 - 5\lambda K\eta)$.  In
    conclusion, if we set $\tilde B = Q^{-1}\hat BB$, $\tilde A = A\hat A$ then
    $\tilde BT\tilde A = I$ and $\|\tilde B\|\|\tilde A\| \leq \lambda K C^{2}/(1-5\lambda K\eta)$.
  \end{proofstep}

  \begin{proofstep}[Proof of~\eqref{prop:b}]
    Note that since $A$ is a bounded operator (see \Cref{remark:basic-operators}), it follows that
    \begin{equation*}
      A(z)=\Big(\sum_{j=1}^\infty a_{\nu(k,j)} x_{(k,j)}:k\in\N\Big)= \sum_{n=1}^\infty a_n  x_n,
    \end{equation*}
    whenever $z=\sum_{n=1}^\infty a_n e_n\in Z$, where the series converges in the product topology
    $\mathcal{P}$.  So, in particular the sum $\sum_{n=1}a_n x_n$ is well defined in $Z$ if the sum
    $\sum_{n=1}a_n e_n$ is well defined. Let us assume that $z=\sum_{n=1}^\infty a_n e_n\in S_Z$ and
    thus $\|A(z)\|\le \sqrt C$.

    By~\eqref{eq:basic-operators:2} and the definition of $D$ we obtain
    \begin{equation*}
      (BTA - D)\Bigl(\sum_{n=1}^\infty a_n e_n\Bigr)
      = \sum_{m=1}^\infty \sum_{n=1}^{m-1} a_n x_m^*(T x_n) e_m
      + \sum_{m=1}^\infty x_m^*\bigl( T \sum_{n=m+1}^\infty a_n x_n \bigr) e_m.
    \end{equation*}
    The norm of the first sum is dominated by
    \begin{equation}\label{E:4.5.1}
      \sum_{m=1}^\infty \sum_{n=1}^{m-1} |a_n| |x_m^*(T x_n)|
      < 2\lambda \sum_{m=1}^\infty \eta_m
      \leq 2\lambda \eta.
    \end{equation}
    
    To estimate the norm of the second sum, first choose $y_n\in W_{\kappa(n)}^{(n)}$ according
    to~\eqref{prop:v-alt} such that $\|x_n - y_n\| < \eta_n$.  Let $m\in\mathbb{N}$ be fixed.  We
    claim that $y = \sum_{n=m+1}^\infty a_n y_n$ is a well defined element in
    $\ell^\infty\bigl(W_k^{(m+1)}:k\kin\N\bigr)$. Indeed, it is well defined since
    $\sum_{n=m+1}^\infty a_n x_n\in Z$ and $\sum_{n=1}^\infty\eta_n < \eta < \infty$.  Moreover, by
    the properties of our enumeration (see~\Cref{conv:1}) and since $W_k^{(n+1)}\subset W_k^{(n)}$,
    $k,n\in\mathbb{N}$, we have that
    \begin{equation*}
      y
      = \sum_{n=m+1}^\infty a_n y_n
      = \sum_{k=1}^\infty \sum_{\substack{n > m\\\kappa(n)=k}} a_n y_n
      \in \ell^\infty\bigl(W_k^{(n_k)}:k\kin\N\bigr),
    \end{equation*}
    where $n_k = \min\{ n > m : \kappa(n) = k\} \geq m+1$; thus we proved
    $y\in \ell^\infty\bigl(W_k^{(m+1)}:k\kin\N\bigr)$.  By a standard perturbation argument, we
    obtain
    \begin{align*}
      \sum_{m=1}^\infty \bigl|x_m^*\bigl( T \sum_{n=m+1}^\infty a_n x_n \bigr)\bigr|
      &\leq \sum_{m=1}^\infty \bigl|T^*x_m^*\bigl(\sum_{n=m+1}^\infty a_n y_n \bigr)\bigr|
        + \sum_{m=1}^\infty \bigl|T^*x_m^*\bigl(\sum_{n=m+1}^\infty a_n (x_n - y_n) \bigr)\bigr|\\
      &\leq \sum_{m=1}^\infty \eta_m \Bigl\| \sum_{n=m+1}^\infty a_n y_n\Bigr\|
        + \sum_{m=1}^\infty \|T^*x_m^*\| \sum_{n=m+1}^\infty |a_n| \eta_n\text{\ \  (by (vi))}\\
      &\leq \sum_{m=1}^\infty \eta_m (2\lambda \sqrt C+2\lambda \eta)
        + \sum_{m=1}^\infty \|T\| \sqrt{C} 2\lambda  \sum_{n=m+1}^\infty \eta_n\\
      &\leq \eta 2\lambda \sqrt{C} (2 + \|T\|) 
    \end{align*}
    which together with \eqref{E:4.5.1} establishes the first part of (b).
 
    For the additional part, assume that $\hat B\colon Z \to Z$ and $\hat A\colon Z\to Z $ are such
    that $\|\hat B\|\|\hat A\| \leq K$ and $I = \hat BD\hat A$.  It follows that
    $\|I - \hat BBTA\hat A\|= \|\hat B(D-BTA)\hat A\| < 2\lambda \sqrt C (3+\|T\|)K\eta < 1$.
    Hence, the map $Q = \hat BBTA\hat A$ is invertible with
    $\|Q^{-1}\| \leq 1/(1 - 2\lambda \sqrt C (3+\|T\|)K\eta)$.  In conclusion, if we set
    $\tilde B = Q^{-1}\hat BB$, $\tilde A = A\hat A$ then $\tilde BT\tilde A = I$ and
    $\|\tilde B\|\|\tilde A\| \leq \lambda K C^{2}/(1 - 2\lambda \sqrt C (3+\|T\|)K\eta)$.\qedhere
  \end{proofstep}
\end{proof}

We finally prepare the work of the next section by isolating the following technical \Cref{L:2.8}.
\begin{lem}\label{L:2.8}
  Assume that $\lambda\ge 1$ and $K\colon (0,\infty)\to (0,\infty)$ is continuous, so that for each
  $k$ the basis constant of $(e_{(k,j)})_j$, is not larger than $\lambda$ and which has the
  $K(\delta)$-factorization property in $X_k$.
        
  Then for every bounded diagonal operator $T\colon Z\to Z$, for which
  \begin{equation*}
    \delta=\inf_{k,j\in\N} \big| e^*_{(k,j)} (Te_{(k,j)})\big|>0,
  \end{equation*}
  the identity almost $K(\delta)$-factors through $T$.
\end{lem}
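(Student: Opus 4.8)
The plan is to decouple the problem over the coordinate spaces $X_k$, solve it there by the hypothesized factorization property, and then reassemble the two factors coordinatewise into operators on $Z$.

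First I would record the structure of a diagonal $T$. Since $e^*_{(l,i)}\bigl(Te_{(k,j)}\bigr)=0$ unless $(l,i)=(k,j)$ and the coordinate functionals $(e^*_{(l,i)})_i$ separate $X_l$, we get $Te_{(k,j)}=c_{(k,j)}e_{(k,j)}$ with $c_{(k,j)}:=e^*_{(k,j)}\bigl(Te_{(k,j)}\bigr)$ and $|c_{(k,j)}|\ge\delta$. Approximating an arbitrary $x\in X_k$ in norm by finitely supported combinations of the $e_{(k,j)}$ and using boundedness of $T$ shows $Tx\in X_k$; hence $T_k:=T|_{X_k}\colon X_k\to X_k$ is a bounded operator with $\|T_k\|\le\|T\|$ which is diagonal relative to $(e_{(k,j)})_j$ and satisfies $\inf_j\bigl|e^*_{(k,j)}(T_ke_{(k,j)})\bigr|\ge\delta$.

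Now fix $\varepsilon>0$. For each $k$, since $(e_{(k,j)})_j$ has the $K(\delta)$-factorization property in $X_k$ and $T_k$ has large diagonal bounded below by $\delta$, the identity $I_{X_k}$ almost $K(\delta)$-factors through $T_k$; so there are $A_k,B_k\colon X_k\to X_k$ with $I_{X_k}=B_kT_kA_k$ and $\|A_k\|\,\|B_k\|\le K(\delta)+\varepsilon$ (the uniform bound $\lambda$ on the basis constants keeps the coefficient operators implicit in the construction of $B_k$ uniformly bounded). Replacing $(A_k,B_k)$ by $(t_kA_k,t_k^{-1}B_k)$ with $t_k=\sqrt{\|B_k\|/\|A_k\|}$ we may assume $\|A_k\|=\|B_k\|\le\sqrt{K(\delta)+\varepsilon}$. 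Define $A,B\colon Z\to Z$ by $A\bigl((z_k)_k\bigr)=(A_kz_k)_k$ and $B\bigl((z_k)_k\bigr)=(B_kz_k)_k$; these are bounded with $\|A\|,\|B\|\le\sqrt{K(\delta)+\varepsilon}$, so $\|A\|\,\|B\|\le K(\delta)+\varepsilon$. Granting $BTA=I_Z$, letting $\varepsilon\downarrow0$ yields that $I_Z$ almost $K(\delta)$-factors through $T$.

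The step I expect to be the main obstacle is precisely the identity $BTA=I_Z$ on all of $Z$. Since $A$ and $B$ are coordinatewise and $T$ restricts coordinatewise to the $c_0$-part $Y=c_0(X_k:k\in\N)$ — there $T|_Y$ is literally $(z_k)_k\mapsto(T_kz_k)_k$, because $Y$ is the closed span of the $e_{(k,j)}$ and $Te_{(k,j)}=c_{(k,j)}e_{(k,j)}$ — one immediately gets $BTA=I_Y$ on the separable part. The work is to promote this to the non-separable space $Z$: a bounded operator on $Z$ is not determined by its restriction to $Y$, so one must argue separately, using boundedness of $T$ \emph{on $Z$} together with the diagonal condition, that $T$ respects the coordinate decomposition of all of $Z$ — concretely, that on $Z$ the operator $T$ cannot differ from the coordinatewise operator $\widetilde T\colon(z_k)_k\mapsto(T_kz_k)_k$ by a nonzero operator that vanishes on $Y$. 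Once that coordinatewise description of $T$ is secured, $BTA\bigl((z_k)_k\bigr)=(B_kT_kA_kz_k)_k=(z_k)_k$ and the proof is complete; this last point is the analogue, for the given $T$, of the coordinatewise block structure of diagonal operators exploited inside the proof of \Cref{prop:main}.
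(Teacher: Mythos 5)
Your approach matches the paper's proof exactly: restrict $T$ to each coordinate $X_k$ (well-defined by the diagonal condition plus boundedness), apply the $K(\delta)$-factorization property there to obtain $A_k,B_k$ with $I_{X_k}=B_kT_kA_k$, normalize so the norms are controlled uniformly in $k$, and assemble the coordinatewise operators $A,B$ on $Z$.

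The ``main obstacle'' you flag --- whether $BTA=I_Z$ on all of $Z$, equivalently whether $T$ agrees with the coordinatewise operator $(z_k)_k\mapsto(T_kz_k)_k$ on the whole nonseparable space and not just on $Y$ --- is a genuine subtlety, and you are right that the weak defining condition $e^*_m(Te_n)=0$ for $m\ne n$ does not by itself pin $T$ down on $Z$ (a bounded operator on $Z$ may differ from the coordinatewise one by a nonzero operator annihilating $Y$). You should know, however, that the paper's own proof does \emph{not} address this either: it simply asserts ``it follows that \dots $I_Z=B\circ D\circ A$.'' The resolution in the paper is that ``diagonal operator on $Z$'' is effectively read in the stronger coordinatewise sense throughout; this is made explicit inside the proof of \Cref{prop:main}, which states that a diagonal operator $D$ ``is of the form $(x_k)\mapsto(D_kx_k)$ where the $D_k$ are uniformly bounded.'' Moreover, in every place \Cref{L:2.8} is actually invoked (the postgame arguments for \Cref{T:2.5} and \Cref{T:2.7}), the diagonal operator is the one produced by \Cref{prop:main}, which is built coordinatewise by construction, so the lemma as applied is sound. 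If you adopt the coordinatewise reading of ``diagonal'' (or add it as a hypothesis), your proof is complete and coincides with the paper's; if you insist on the weak reading, both your proof and the paper's leave the same step unjustified, and the clean fix is to build the coordinatewise structure into the hypotheses rather than to try to derive it.
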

   
\begin{proof}
  Let $D\colon Z\to Z$ be a bounded diagonal operator with
  \begin{equation*}
    \delta=\inf_{k,j\in\N} \big|e^*_{(k,j)}\big( Te_{(k,j)}\big)\big|>0.
  \end{equation*}
  For $k\in\N$ let $D_k=D|_{X_k}$.  Then $D_k$ is a diagonal operator from $X_k$ to $X_k$.  Next,
  let $\eta>0$.  Then there are for each $k\in\N$ bounded operators $A _k\colon X_k\to X_k$ and
  $B _k\colon X_k\to X_k$, so that $\|A_k\|\cdot \|B_k\|\le K(\delta) + \eta$ and
  $I_k =B_k \circ D_k\circ A_k$, where $I_k$ is the identity on $X_k$.  We can assume that
  $\|A_k\|=1$ and that $\|B_k\|\le K(\delta)+\eta$.  Putting
  \begin{align*}
    A&\colon Z\to Z, \quad (z_{k})\mapsto \big(A_kz_{k}\big),\\
    B&\colon Z\to Z, \quad (z_{k})\mapsto \big(B_kz_{k}\big),
  \end{align*}
  it follows that $\|A\|=1$ and $\|B\|\le K(\delta)+\eta$, and $I_Z=B\circ D\circ A$.
\end{proof}

\section{Proof of \Cref{T:2.5} and \Cref{T:2.7}}
\label{sec:proof-creft:2.5-cref}

The proof of both theorems \Cref{T:2.5} and \Cref{T:2.7} is organized as depicted in the flowchart

\Cref{fig:chart}.
\begin{figure}[bth]
  \includegraphics[scale=0.25]{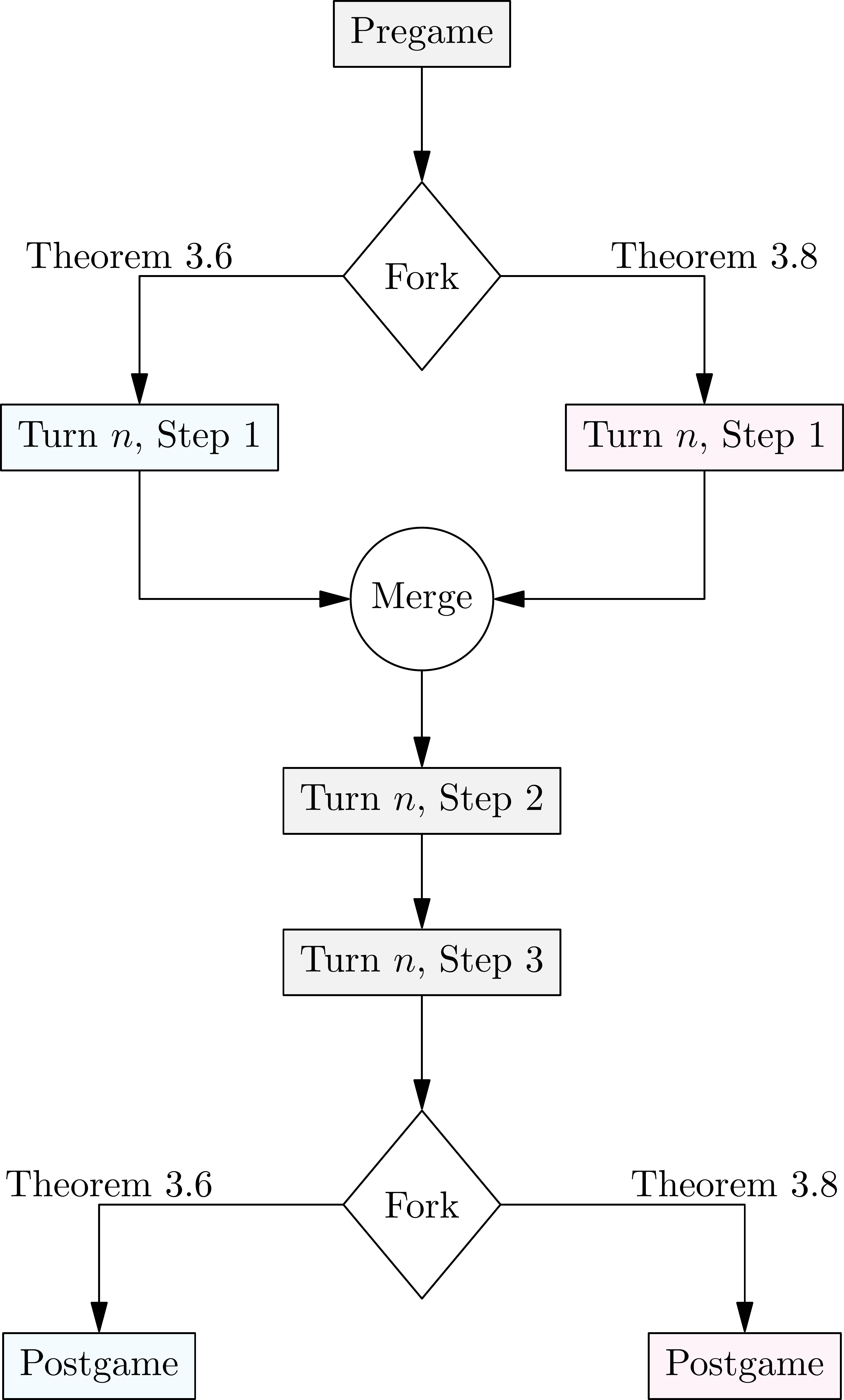}
  \caption{Flowchart of the proof of \Cref{T:2.5} and \Cref{T:2.7}.}
  \label{fig:chart}
\end{figure}
According to the flowchart \Cref{fig:chart}, their respective proofs deviate at two distinct points.
In the proof below, we indicate that, by arranging the text side by side in separate columns.  We
use a corresponding color scheme in the flowchart and in the proof.

Let $T\colon Z\to Z$ be a bounded linear operator with
$\inf_{k,j\in\N} \big|e_{(k,j)}^*(Te_{(k,j)})\big| =: \delta>0$.  By \Cref{rem:2.4}, Player II has a
winning strategy in the game $\mathrm{Rep}_{(Z,(e_{(k,j)}))}(C,\eta)$.  Let us fix $\eta>0$ to be
determined later.
  
We will now describe a strategy for Player I in a game $\mathrm{Rep}_{(X,(e_i))}(C,\eta)$, and
assume Player II answers by following his winning strategy.

\subsubsection*{Pregame}
\label{sec:pregame}
At the beginning, Player I chooses $N_1 = \{n\in\N:e_n^*(Te_n) \geq \delta\}$ and
$N_2 = \{n\in\N: e_n^*(Te_n) \leq -\delta\}$.  For $k\in\N$, and $i=1,2$ let
$N_i^{(k)}= \{\nu(k,j):j\in\N\}\cap N_i$.

\subsubsection*{Turn $n$, Step $1$}
\label{sec:turn-n-step-1}

During the $n$-th turn, Player I proceeds as follows.  In the first step of the $n$-th turn he
chooses
\begin{equation}\label{eq:eta_n}
  \eta_n
  < \frac{\eta}{2^{n+2} n (1+\|T\|) \sqrt{C+\eta}}.
\end{equation}
Let $l_n =\max (\bigcup_{m=1}^{n-1} E_m)$ if $n>1$ (where the finite sets $E_1$, $E_2$,...$E_{n-1}$
were chosen in previous turns and $l_1=1$).  Define
\begin{equation*}
  A_n
  = \big\{ x_m,Tx_m : m < n\} \cup \{ e_{(\kappa(n),i)}, Te_{(\kappa(n),i)}: i\le l_n\}
\end{equation*}
and put
\begin{equation}\label{eq:G_n}
  G_n = A_n^\perp\cap X^*_{\kappa(n)}
\end{equation}
as subspace of $X^*_{\kappa(n)}$ and $Z^*$.

\bigskip
\begin{mdframed}[backgroundcolor=black!5,leftline=false,rightline=false,topline=false,bottomline=false]
  \begin{minipage}[t]{.5\linewidth}
    \begin{mdframed}[backgroundcolor=cyan!5]
      \begin{mdframed}[backgroundcolor=gray!20]
        Choice of $W_n$ in \Cref{T:2.5}.
      \end{mdframed}
      \hfill\newline Now, let
      \begin{align*}
        B_n
        &= \big\{ x^*_m,T^*x_m^*:m < n\big\}\\
        &\quad\cup \{ e^*_{(\kappa(n),i)},T^*e^*_{(\kappa(n),i)}: i\le l_n\}
      \end{align*}
      and define
      \begin{equation}\label{eq:V_n}
        W_n = (B_n)_\perp\cap X_{\kappa(n)}
      \end{equation}
      as a subspace of $X_{\kappa(n)}$ and $Z$.
    \end{mdframed}
  \end{minipage}
  \begin{minipage}[t]{.5\linewidth}
    \begin{mdframed}[backgroundcolor=magenta!5]
      \begin{mdframed}[backgroundcolor=gray!20]
        Choice of $W_n$ in \Cref{T:2.7}.
      \end{mdframed}
      \hfill\newline In order to choose the required cofinite dimensional subspace $W_n$ of
      $X_{\kappa(n)}$ Player I also chooses at the $n$-th step a sequence $(V^{(n)}_{l})_l$, where
      $V_l^{(n)} $ is a finite codimensional subspace of $X_{l}$ of the form
      \begin{equation}\label{eq:V_l^n}
        V_l^{(n)}
        =[e_{(l,j)}:j\ge m(n,l)],
      \end{equation}
      with $V^{(n)}_{l}\subset V^{(m)}_{l}$ for $l\in\N$, $0\le m<n$ and we put $V^{(0)}_l=X_l$ for
      $l\in\N$.  By using \Cref{L:2.10} finitely many times Player I can find for each $l$ a
      subspace $V^{(n)}_l$ of $V^{(n-1)}_l$ so that for all $j<n$
      \begin{equation}\label{E:2.5.3}
        \|T^*x^*_j|_{\ell^\infty(V^{(n)}_l:l\in\N)}\|\le \eta_n
      \end{equation}
      and he chooses
      \begin{equation}\label{eq:W_n-alt}
        W_n\keq V^{(n)}_{\kappa(n)}\keq[e_{(\kappa(n),j)} :j\kge m_n],
      \end{equation}
      where $m_n=m(n,\kappa(n))$.
    \end{mdframed}
  \end{minipage}
\end{mdframed}

\subsubsection*{Turn $n$, Step $2$}
\label{sec:turn-n-step-2}

Player II, following a winning strategy, chooses $i_n\in\{1,2\}$, picks a finite set
$E_n\subset N^{(\kappa(n))}_{i_n}$ and sequences of non-negative scalars
$(\lambda_i^{(n)})_{i\in E_n}$, $(\mu_i^{(n)})_{i\in E_n}$ with
\begin{equation*}
  1-\eta
  < \sum_{i\in E_n}\lambda_i^{(n)}\mu_i^{(n)}
  < 1+\eta.
\end{equation*}

\subsubsection*{Turn $n$, Step $3$}
\label{sec:turn-n-step-3}

Then Player I picks signs $(\varepsilon_i^{(n)})_{i\in E_n}\in\{-1,+1\}^{E_n}$ so that whenever
\begin{equation*}
  x_n = \sum_{i\in E_n}\lambda_i^{(n)}\varepsilon_i^{(n)}e_{(\kappa(n),i)}
  \qquad\text{and}\qquad
  x_n^* = \sum_{i\in E_n}\mu_i^{(n)}\varepsilon_i^{(n)}e_{(\kappa(n),i)}^*,
\end{equation*}
we have
\begin{equation}\label{E:2.5.1}
  |x_n^*\bigl(T x_n\bigr)| > (1-\eta)\delta.
\end{equation}
That it is possible to choose such signs $(\varepsilon_i^{(n)})_{i\in E_n}\in\{-1,+1\}^{E_n}$
follows from the following probabilistic argument: Let $r=(r_j)_{j\in E_n}$ be a Rademacher
sequence, meaning that $r_j$, $j\in E_n$, are independent random variables on some probability space
$(\Omega,\Sigma,\P)$, with $\P(r_j\keq1)=\P(r_j\keq-1)=\frac12$.
\begin{align*}
  \E\bigg(&\Big(\sum_{i\in E_n} r_i \mu^{(n)}_i e^*_{(\kappa(n),i)}\Big)
            \Big(T\big(\sum_{j\in E_n} r_j\lambda^{(n)}_j e_{(\kappa(n),j)}\big)\Big)\bigg)\\
          &=\E\Big( \sum_{i,j\in E_n} r_i r_j  \mu^{(n)}_i\lambda^{(n)}_j e^*_j(Te_{(\kappa(n),i)})\Big)
            =\sum_{i\in E_n} \mu^{(n)}_i\lambda^{(n)}_ie^*_{(\kappa(n),i)}(Te_{(\kappa(n),i)})\\
          &> \delta(1-\eta).
\end{align*}
The latter inequality follows from the large diagonal of the operator.

\subsubsection*{Postgame}
\label{sec:postygame}

After the game is completed the conditions~\eqref{enu:simul-game:i} to~\eqref{enu:simul-game:iv} of
\Cref{rem:2.4} are satisfied.

\medskip Now, let $m,n\in\mathbb{N}$, $m < n$.  By the winning strategy of Player II
(see~\eqref{enu:simul-game:iii} of \Cref{rem:2.4}) and~\eqref{eq:G_n}, we obtain
\begin{equation}\label{eq:past-est}
  |x_n^*(Tx_m)|
  \leq \|Tx_m\|\cdot\mathrm{dist}(x_{n}^*,G_{n})
  < \|T\|\sqrt{C+\eta}\cdot\eta_{n}.
\end{equation}
We now estimate $|x_n^*(Tx_m)|$ if $m>n$.\medskip
\begin{mdframed}[backgroundcolor=black!5,leftline=false,rightline=false,topline=false,bottomline=false]
  \begin{minipage}[t]{.5\linewidth}
    \begin{mdframed}[backgroundcolor=cyan!5]
      \begin{mdframed}[backgroundcolor=black!20]
        Postgame for \Cref{T:2.5}.
      \end{mdframed}
      \hfill\newline By the winning strategy of Player II
      (see~\Cref{rem:2.4}~\eqref{enu:simul-game:iv}) together with~\eqref{eq:V_n} yields for all
      $n < m$ that
      \begin{align*}
        |x_n^*(Tx_m)|
        &= |T^*x_n^*(x_m)|\\
        &\leq \|T^*x_n^*\|\cdot \dist(x_m, W_m)\\
        &< \|T^*\|\sqrt{C+\eta}\cdot \eta_m.
      \end{align*}

      By~\eqref{eq:eta_n}, \eqref{eq:past-est} and the above estimate, we obtain
      \begin{align*}
        \sum_{\substack{m,n\in\mathbb{N}\\n\neq m}}& |x^*_m(Tx_n)|\\
                                                   &\leq \|T\|\sqrt{C+\eta}\sum_{n=1}^\infty \eta_{n}
                                                     < \eta.
      \end{align*}
      By \Cref{prop:main}~\eqref{prop:a}, there is an infinite set $\Gamma\subset \N$ so that the
      diagonal operator $D_\Gamma\colon Z_\Gamma\to Z_\Gamma$ given by
      \begin{equation*}
        De_{(k,i)} = x_{(k,i)}^*(Tx_{(k,i)})
      \end{equation*}
      is bounded.  By \Cref{L:2.8}, the identity on $Z_\Gamma$ $(K(\delta - \eta)+\xi)$ factors
      through $D_\Gamma$ for any $\xi>0$.  Hence, if $\eta$ is sufficiently small then by the
      additional assertion in \Cref{prop:main}~\eqref{prop:a}, the identity
      $\Big(\frac{ (\lambda K(\delta-\eta)+\xi)(C+\eta)^{2}}{1-5\lambda (K(\delta -
        \eta)+\xi)\eta}\Big)$-factors through $T$.
    \end{mdframed}
  \end{minipage}
  \begin{minipage}[t]{.5\linewidth}
    \begin{mdframed}[backgroundcolor=magenta!5,leftline=false,rightline=false,topline=false,bottomline=false]
      \begin{mdframed}[backgroundcolor=black!20]
        Postgame for \Cref{T:2.7}.
      \end{mdframed}
      \hfill\newline Note that by~\eqref{eq:past-est}, condition~\eqref{prop:iv} of \Cref{prop:main}
      is satisfied if we replace $\eta_n$ by $\eta/2^{n+2}$ .  Secondly, for $n_0<n$ if
      $w_k\in V^{(n)}_k\subset V^{(n_0+1)}_k$, for $k\in\N$ with $\|(w_k)\|_Z\le 1$ and $z=(w_k)$,
      then by \eqref{E:2.5.3}
      \begin{equation*}
        \big|\bigl(T^*_{n_0}x^*_{n_0}\bigr)(z)\big|
        \le \eta_{ n_0}<\eta/2^{n_0+2}.
      \end{equation*}
      Thus, condition~\eqref{prop:v-alt} of \Cref{prop:main} is satisfied, as well.  By
      \Cref{prop:main}~\eqref{prop:b}, the diagonal operator $D:Z\to Z$ given by
      \begin{equation*}
        De_{(k,i)}
        = x_{(k,i)}^*(Tx_{(k,i)})
      \end{equation*}
      is bounded.  By \Cref{L:2.8}, the identity on $Z$ $(K(\delta - \eta)+\xi)$-factors through $D$
      for any $\xi>0$.  Hence, if $\eta$ is sufficiently small, then by the additional assertion of
      \Cref{prop:main}~\eqref{prop:b}, the identity
      $\Big(\frac{ (\lambda K(\delta-\eta)+\xi)(C+\eta)^{2}}{1-2\lambda
        \sqrt{C+\eta}(3+\|T\|)K(\delta - \eta)+\xi)\eta}\Big)$-factors through $T$.
    \end{mdframed}
  \end{minipage}
\end{mdframed}
\medskip Recall that the function $K\colon (0,\infty)\to\mathbb{R}$ is continuous
(see~\cite[Remark~3.11]{lechner:motakis:mueller:schlumprecht:2018}).  As we could have picked $\eta$
and $\xi$ arbitrarily close to zero we deduce that the identity on $Z$ almost
$\lambda K(\delta)C^{2}$-factors through $T$.\qed

\section{Heterogenous $\ell^\infty$-sums of classical Banach spaces}
\label{sec:heter-sums-class}

Given $1 < p_0 < p_1 < \infty$, we define the sets of Banach spaces
\begin{align}
  &\mathcal{W}
    =\{ L^p,\!H^p,\!\vmo,\!\vmo(H^r),\!H^p(H^q),\!L^r(L^s) : 1\kleq p,q\kle\infty, p_0\kleq r,s
    \kleq p_1\}
    \label{eq:W_k-dfn}\\
  & \mathcal{X}
    =\{ L^r, H^p, \vmo, \vmo(H^r), H^p(H^q), L^r(L^s) : p_0 \kleq p,q \kle \infty, p_0 \kleq r,s \kleq p_1\}.
    \label{eq:X_k-dfn}
\end{align}

In this section, we use the following notation: Assume that we are given a sequence of Banach spaces
$(V_k)$ in either $\mathcal{W}$ or $\mathcal{X}$.  Then for each $k,j\in\mathbb{N}$, $e_{k,j}$
denotes the $j$-th Haar function whenever $V_k\in\{L^r, H^p, \vmo\}$, and $e_{k,j}$ denotes the
$j$-th biparameter Haar function if $V_k\in\{\vmo(H^r), H^p(H^q), L^r(L^s)\}$.  For details on the
enumeration of the biparameter Haar system we refer to~\cite{laustsen:lechner:mueller:2015}; see
also~\cite{lechner:motakis:mueller:schlumprecht:2018}.

\begin{cor}\label{cor:curved:1}
  Let $(W_k)_{k=1}^\infty$ denote a sequence of Banach spaces in $\mathcal{W}$ and let
  $T: \ell^\infty(W_k:k\kin\N)\to\ell^\infty(W_k:k\kin\N)$ be bounded and linear, with
  \begin{equation*}
    \delta
    =\inf_{k,j\in\N} \big| e_{k,j}^* (Te_{k,j})\big|
    > 0.
  \end{equation*}
  Then for each sequence of infinite subsets $(\Omega_l)$ of $\mathbb{N}$, there is an infinite
  $\Gamma\subset \N$ so that $\Gamma\cap\Omega_l$ is infinite for all $l\in\mathbb{N}$ and the
  identity on $\ell^\infty(W_k:k\in\Gamma)$ $\frac{C}{\delta}$-factors through $T$, where $C$
  depends only on $p_0,p_1$.
\end{cor}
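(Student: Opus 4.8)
The Corollary is the special case of \Cref{T:2.5} in which $X_k:=W_k$ and $(e_{(k,j)})_j$ is the Haar system of $W_k$ when $W_k\in\{L^p,H^p,\vmo\}$ and the biparameter Haar system of $W_k$ when $W_k\in\{\vmo(H^r),H^p(H^q),L^r(L^s)\}$. Hence the whole task is to produce constants $\lambda,C_0\ge 1$ and a map of the form $K(\delta)=C_1/\delta$, \emph{all depending only on $p_0$ and $p_1$}, for which hypotheses (i)--(iii) of \Cref{T:2.5} hold; \Cref{T:2.5} then hands over, for the prescribed sequence $(\Omega_l)$, an infinite $\Gamma\subset\N$ with $\Gamma\cap\Omega_l$ infinite for all $l$ such that the identity on $Z_\Gamma=\ell^\infty(W_k:k\in\Gamma)$ $\bigl(\lambda C_0^2K(\delta)\bigr)$-factors through $T$, and one puts $C:=\lambda C_0^2 C_1$. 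No extra argument is needed for the $\Gamma$-refinement, since it is produced by \Cref{T:2.5} itself.

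Hypotheses (i) and (ii) are read off from the unconditionality of the bases. In each square-function Hardy space $H^p$, $H^p(H^q)$ and in its $\bmo$-predual $\vmo$, $\vmo(H^r)$, flipping the signs of Haar coefficients leaves the defining (mixed) square-function norm invariant, so the (biparameter) Haar system is $1$-unconditional; thus its basis constant is $1$, and a bounded diagonal operator $D$ with $\inf_i|d_i|\ge\delta$ yields the $K$-diagonal factorization property with $K(\delta)=1/\delta$ (write $I=D^{-1}\circ D$, with $\|D^{-1}\|\le 1/\delta$ by $1$-unconditionality). For $L^p$ the conditional expectations $\cond_n$ are $L^p$-contractions, so the Haar system is monotone; for $L^r(L^s)$ with $r,s\in[p_0,p_1]$ the biparameter Haar system is unconditional with constant bounded by a product of $\umd$-constants (cf.\ \Cref{remark:Lr(Ls)}), which stays bounded on the compact square $[p_0,p_1]^2$ and simultaneously gives $K(\delta)=\Lambda/\delta$ there. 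So $\lambda$ and $K(\delta)=C_1/\delta$ can be chosen depending only on $p_0,p_1$.

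The substantive point is hypothesis (iii), simultaneous $C_0$-strategic reproducibility of the array, i.e.\ a single $C_0$ making each $(e_{(k,j)})_j$ $C_0$-strategically reproducible in $W_k$. For $H^1$ this is \cite[Theorem~5.2]{lechner:motakis:mueller:schlumprecht:2018}, and for $\vmo(H^r)$ it is \cite[Theorem~5.3]{lechner:motakis:mueller:schlumprecht:2018}, with a constant bounded for $r\in[p_0,p_1]$; \Cref{proposition: predual} transfers strategic reproducibility to the preduals $\vmo$ and $\vmo(H^r)$ uniformly, as recorded in \Cref{remark:predual}. For $L^p$, $H^p$ and $H^p(H^q)$ one runs the Gamlen--Gaudet-type reproducing scheme: restricting the (biparameter) Haar system to a dyadic subtree supported on a subinterval (resp.\ subrectangle) reproduces the space \emph{isometrically} in its square-function, $L^p$, or mixed norm, so Player~II has a winning strategy with an \emph{absolute} constant, independent of the exponents; for $L^r(L^s)$, $r,s\in[p_0,p_1]$, the analogous reproducibility holds with a constant bounded on $[p_0,p_1]^2$. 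Taking $C_0$ to be the maximum of these finitely many constant profiles over the six families comprising $\mathcal{W}$ yields $C_0=C_0(p_0,p_1)$. With (i)--(iii) verified, \Cref{T:2.5} applies verbatim and delivers $\Gamma$ (meeting each $\Omega_l$ infinitely) together with the factorization of the identity on $\ell^\infty(W_k:k\in\Gamma)$ through $T$ with constant $\le\lambda C_0^2 C_1/\delta=C/\delta$. The only real obstacle is this bookkeeping — checking that none of the constants, above all the strategic reproducibility constants of the mixed-norm spaces, degenerate; it is precisely to keep those bounded that the exponents $r,s$ in $\vmo(H^r)$ and $L^r(L^s)$ are confined to the compact interval $[p_0,p_1]$.
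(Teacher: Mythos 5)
Your proposal follows the same route as the paper: reduce \Cref{cor:curved:1} to \Cref{T:2.5} by producing $\lambda$, a simultaneous strategic reproducibility constant, and a map $K(\delta)=C_1/\delta$, all controlled by $p_0,p_1$, and this is exactly what the paper's proof does.

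However, there is a genuine gap in your verification of the $K$-diagonal factorization hypothesis for the family $L^p$, $1\le p<\infty$. You establish the diagonal factorization in the square-function Hardy spaces, in their $\bmo$-preduals, and in $L^r(L^s)$ via unconditionality of the (biparameter) Haar system, but for $L^p$ you note only that the conditional expectations are contractions, hence the Haar system is \emph{monotone}. Monotonicity gives hypothesis (i) of \Cref{T:2.5} (basis constant) but does not by itself give hypothesis (ii). For $p>1$ one can appeal to unconditionality, but this must be tracked uniformly (the $\umd$ constant of $L^p$ blows up as $p\to\infty$, so one really needs the stronger statement that the \emph{diagonal factorization} constant stays universal, which is the content of the cited results from the prior paper). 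More seriously, $\mathcal{W}$ contains $L^1$, and the Haar basis of $L^1$ is \emph{not} unconditional, so the argument \textquotedblleft$I=D^{-1}\circ D$ with $\|D^{-1}\|\le 1/\delta$\textquotedblright\ fails; the paper handles this by invoking Proposition~6.2 of \cite{lechner:motakis:mueller:schlumprecht:2018}. Without some substitute for that step your proof does not cover the case $W_k=L^1$ (or more generally does not keep the constant for $L^p$ under control as $p\to\infty$). A minor attribution slip: you cite Theorem~5.3 of \cite{lechner:motakis:mueller:schlumprecht:2018} as giving strategic reproducibility directly \emph{in} $\vmo(H^r)$, whereas it concerns $H^1(H^{r'})$ and the transfer to the predual $\vmo(H^r)$ goes through \Cref{proposition: predual}; you do state the predual transfer later, so this is just a wording inconsistency, not a logical gap.
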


\begin{proof}
  First, we note that by~\cite[Proof of Theorem~5.1, Theorem~5.2, Theorem~5.3, Theorem~6.1,
  Remark~6.6]{lechner:motakis:mueller:schlumprecht:2018} that the one- or two-parameter Haar system
  in each of the spaces $L^p$, $H^p$, $H^p(H^q)$, $1\leq p,q < \infty$ is $C$-strategically
  reproducible for some universal constant, and $L^r(L^s)$, $p_0\leq r,s\leq p_1$ is
  $C_{p_0,p_1}$-strategically reproducible.  By \Cref{proposition: predual}, the Haar system is
  strategically reproducible in $\vmo$, and the biparameter Haar system is
  $C_{p_0,p_1}$-strategically reproducible in $\vmo(H^p)$.  In each of the spaces in $\mathcal{W}$,
  the Haar system has the $\frac{C}{\delta}$-diagonal factorization property (for the $L^1$ case we
  refer to \cite[Proposition 6.2]{lechner:motakis:mueller:schlumprecht:2018}, for the other cases,
  this follows by unconditionality) for some universal constant $C$.  The assertion follows from
  \Cref{T:2.5}.
\end{proof}

Before we can proceed to our next application, we need the following observation.

\begin{lem}\label{lem:curved}
  Let $(X_k)_{k=1}^\infty$ denote a sequence of Banach spaces in $\mathcal{X}$.  Then the array
  $(e_{k,j})$ is uniformly asymptotically curved.
\end{lem}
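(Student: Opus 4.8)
The plan is to reduce the statement to \Cref{lem:asymtotically-curved:2}: it suffices to produce a single exponent $s>1$ and a single constant $C\ge 1$, both depending only on $p_0$ and $p_1$, so that for every $k$ every block basis of $(e_{k,j})_j$ in $X_k$ satisfies an upper $s$-estimate with constant $C$. Since $\|(t_j)\|_{\ell^{s}}\le\|(t_j)\|_{\ell^{s_0}}$ whenever $1<s_0\le s$, an upper $s$-estimate automatically implies an upper $s_0$-estimate with the same constant; hence it is enough to check, for each of the six kinds of space occurring in $\mathcal{X}$, an upper estimate whose exponent is bounded below by $s_0:=\min(2,p_0)>1$, with constant controlled by $p_0,p_1$. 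Then \Cref{lem:asymtotically-curved:2} applied with $s=s_0$ gives the claim (equivalently, a normalized block basis obeying an upper $s_0$-estimate with constant $C$ satisfies $\|\tfrac1n\sum_{j=1}^n x_j\|\le Cn^{1/s_0-1}\to 0$ uniformly in $k$).

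For $X_k\in\{H^p,\,L^r,\,H^p(H^q),\,L^r(L^s)\}$ the required estimate is immediate from \Cref{pro:r-estimate} and \Cref{remark:Lr(Ls)}. Indeed, by (the one-parameter analogue of) \Cref{pro:r-estimate} every block basis of the Haar system in $H^p$ satisfies an upper $\min(2,p)$-estimate with constant $1$, and $\min(2,p)\ge\min(2,p_0)=s_0$ because $p\ge p_0$; the same holds in $L^r$ for $p_0\le r\le p_1$, using that the Haar system spans $L^r$ isomorphically to $H^r$ with isomorphism constant bounded on $[p_0,p_1]$. For $H^p(H^q)$, \Cref{pro:r-estimate} gives an upper $\min(2,p,q)$-estimate with constant $1$ and $\min(2,p,q)\ge s_0$; for $L^r(L^s)$, \Cref{remark:Lr(Ls)} gives an upper $\min(r,s)$-estimate with a constant depending only on $p_0,p_1$, and $\min(r,s)\ge p_0\ge s_0$.

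The only genuinely different cases are $X_k=\vmo$ and $X_k=\vmo(H^r)$, which I would treat by duality. Here $\vmo^*=H^1$ and, by \Cref{thm:dual-vmo(Hp)}, $\vmo(H^r)^*=H^1(H^{r'})$ with $r'=r/(r-1)$, the duality isomorphism constants being bounded by a function of $p_0,p_1$ (cf.\ \Cref{remark:predual}); under these identifications the coordinate functionals of the (bi)parameter Haar basis of $X_k$ correspond, up to normalization and a uniformly bounded isomorphism, to the (bi)parameter Haar system of the dual space. By (the one-parameter analogue of) \Cref{pro:r-estimate}, every block sequence of the Haar system in $H^1$ obeys a lower $2$-estimate, and every block sequence of the biparameter Haar system in $H^1(H^{r'})$ obeys a lower $\max(2,r')$-estimate, both with constant $1$; transporting through the isomorphism, every block sequence of the coordinate functionals of $(e_{k,j})_j$ in $X_k^*$ obeys the corresponding lower estimate with a constant depending only on $p_0,p_1$. \Cref{lem:asymtotically-curved:1} then converts this into an upper $2$-estimate for block bases of $(e_{k,j})_j$ in $\vmo$ and an upper $\min(2,r)$-estimate in $\vmo(H^r)$, again with a constant depending only on $p_0,p_1$; and $\min(2,r)\ge\min(2,p_0)=s_0$ since $r\ge p_0$.

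Combining the previous two paragraphs, every $X_k\in\mathcal{X}$ satisfies the hypothesis of \Cref{lem:asymtotically-curved:2} with $s=s_0=\min(2,p_0)>1$ and a constant depending only on $p_0,p_1$, so the array $(e_{k,j})$ is uniformly asymptotically curved. The delicate point — the ``hard part'' — is not any single estimate but the bookkeeping ensuring that all the constants stay uniform over the infinite family $\mathcal{X}$; this reduces to the uniform boundedness, for $p_0\le r,s\le p_1$, of the isomorphisms $H^r\cong L^r$, $H^r(H^s)\cong L^r(L^s)$ and $\vmo(H^r)^*\cong H^1(H^{r'})$ (recorded in \Cref{remark:Lr(Ls)}, \Cref{thm:dual-vmo(Hp)} and \Cref{remark:predual}), together with the explicit constants (equal to $1$, resp.\ their reciprocals) supplied by \Cref{pro:r-estimate} and \Cref{lem:asymtotically-curved:1}.
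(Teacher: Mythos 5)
Your proposal is correct and follows essentially the same route as the paper for the biparameter spaces: the treatment of $H^p(H^q)$ and $L^r(L^s)$ via \Cref{pro:r-estimate} and \Cref{remark:Lr(Ls)}, and of $\vmo(H^r)$ via \Cref{thm:dual-vmo(Hp)} together with \Cref{lem:asymtotically-curved:1} and \Cref{lem:asymtotically-curved:2}, is exactly what the paper does. The one real divergence is the one-parameter spaces $L^r$, $H^p$, $\vmo$. There the paper argues directly: for a normalized block basis $(f_{(k,j)})_j$ with disjoint Haar spectra it bounds $\|\sum_j f_{(k,j)}\|_{X_k}$ by the Rademacher average $C\int_0^1\|\sum_j r_j(t) f_{(k,j)}\|_{X_k}\,dt$ (unconditionality) and then by $C n^{1/r}$ using the estimate in M\"uller's book with $r=p_0$ for $L^p,H^p$ and $r=2$ for $\vmo$. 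You instead invoke one-parameter analogues of \Cref{pro:r-estimate} and \Cref{remark:Lr(Ls)}, plus the duality $\vmo^*\cong H^1$ and \Cref{lem:asymtotically-curved:1}. These analogues are not literally stated in the paper, but they are classical and proved by the same (in fact simpler) computations as the biparameter versions, so this is a legitimate and equally short route. Your explicit reduction — fixing $s_0=\min(2,p_0)>1$, observing that an upper $s$-estimate with $s\ge s_0$ implies an upper $s_0$-estimate with the same constant, and then applying \Cref{lem:asymtotically-curved:2} once — is a clean organizational improvement, and it silently corrects the paper's displayed bound $Cn^{1/p_0}$, which is only accurate for $p_0\le 2$ (for $p_0>2$ it should read $Cn^{1/\min(2,p_0)}$, which is still $o(n)$, so the paper's conclusion is unaffected). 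One small misattribution: the uniformity over $r\in[p_0,p_1]$ of the duality constant $(\vmo(H^r))^*\cong H^1(H^{r'})$ comes from the proof of \Cref{thm:dual-vmo(Hp)} (via the uniform $\umd$ constant of $H^r$ on compact subintervals of $(1,\infty)$), not from \Cref{remark:predual}, which concerns strategic reproducibility.
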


\begin{proof}
  For each $k\in\mathbb{N}$, let $(f_{(k,j)})_j$ be a normalized block basis of $(h_I)$.  Let
  $k\in\mathbb{N}$ be fixed for now.  Since the $(f_{(k,j)})_j$ have disjoint Haar spectra, observe
  that for each $n\in\mathbb{N}$
  \begin{equation*}
    \Bigl\| \sum_{j=1}^n f_{(k,j)} \Bigr\|_{X_k}
    \leq C \int_0^1 \Bigl\|\sum_{j=1}^n r_j(t)f_{(k,j)}\Bigr\|_{X_k}dt
    \leq C \Bigl(\sum_{j=1}^n\|f_{(k,j)}\|_{X_k}^r\Bigr)^{1/r},
  \end{equation*}
  where $r_1,\ldots,r_K$ are independent Rademacher functions and $r = p_0$ if $X_n\in\{L^p, H^p\}$
  and $r = 2$ if $X_n=\vmo$~\cite[Proposition 5.1.1, p.~268]{mueller:2005}.  The constant $C$
  depends on $p_0$ and $p_1$ if $X_n = L^p$, and $C=1$ if $X_n\in\{H^p,\vmo\}$.  In either case, we
  obtain
  \begin{equation*}
    \sup_k \Bigl\|\sum_{j=1}^n f_{(k,j)}\Bigr\|_{X_k}
    \leq C n^{1/p_0}.
  \end{equation*}

  Moving on to the biparameter spaces, let $r'$ be such that $\frac{1}{r}+\frac{1}{r'} = 1$.  By
  \Cref{pro:r-estimate}, $H^1(H^{r'})$ satisfies a lower $r'$-estimate with constant $1$; hence by
  \Cref{lem:asymtotically-curved:1} and \Cref{lem:asymtotically-curved:2}, the array formed by the
  biparameter Haar system in the predual of $H^1(H^{r'})$, which is $\vmo(H^r)$, is uniformly
  asymptotically curved.  For the spaces $H^p(H^q)$ and $L^r(L^s)$ we refer to
  \Cref{pro:r-estimate}, \Cref{remark:Lr(Ls)} and \Cref{lem:asymtotically-curved:2}.
\end{proof}

\Cref{lem:curved} and \Cref{T:2.7} combined yield the following factorization result.
\begin{cor}\label{cor:curved:2}
  Let $(X_k)_{k=1}^\infty$ denote a sequence of Banach spaces in $\mathcal{X}$, and let
  $T: \ell^\infty(X_k:k\kin\N)\to\ell^\infty(X_k:k\kin\N)$ be bounded and linear, with
  \begin{equation*}
    \delta
    =\inf_{k,j\in\N} \big| e_{k,j}^* (Te_{k,j})\big|
    > 0.
  \end{equation*}
  Then the identity on $\ell^\infty(X_k:k\kin\N)$ $\frac{C}{\delta}$-factors through $T$, where $C$
  depends only on $p_0,p_1$.
\end{cor}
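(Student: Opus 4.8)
The plan is to verify that the three hypotheses of \Cref{T:2.7} hold for the array $(e_{k,j})$ associated to a sequence $(X_k)$ in $\mathcal{X}$, and then simply invoke \Cref{T:2.7}. The uniform asymptotic curvedness is exactly the content of \Cref{lem:curved}, which has just been established, so that hypothesis is free. It therefore remains to check (i) a uniform bound on the basis constants, (ii) the $K$-diagonal factorization property with a uniform map $K$, and (iii) simultaneous $C$-strategic reproducibility in $Z=\ell^\infty(X_k:k\in\N)$.

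For (i) and (ii), I would argue space by space through the list defining $\mathcal{X}$. For $H^p$, $H^p(H^q)$, and $L^r(L^s)$ with $p_0\le r,s\le p_1$, the (one- or bi-parameter) Haar system is $1$-unconditional, so the basis constant is $1$ and the diagonal factorization property holds with $K(\delta)=C/\delta$ for a constant $C$ depending only on $p_0,p_1$ (via the isomorphism $H^r(H^s)\cong L^r(L^s)$ and \Cref{remark:Lr(Ls)}). For $\vmo$ and $\vmo(H^r)$ the Haar system is again $1$-unconditional with the same consequences; here one uses that $\vmo$ and $\vmo(H^r)$ are the norm closures of the Haar span in $(H^1)^*$ and in $\bmo(H^r)=H^1(H^{r'})^*$ respectively, so $1$-unconditionality is inherited, and the diagonal factorization constant again depends only on $p_0,p_1$. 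Thus $\lambda=1$ and a single $K(\delta)=C/\delta$ work uniformly over all $X_k\in\mathcal{X}$.

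For (iii), I would collect the strategic reproducibility results already quoted in the excerpt: by \cite[Theorems~5.1--5.3, 6.1]{lechner:motakis:mueller:schlumprecht:2018} the one- and bi-parameter Haar systems are $C_{p_0,p_1}$-strategically reproducible in $H^p$, $H^p(H^q)$, and $L^r(L^s)$ ($p_0\le r,s\le p_1$), with a constant depending only on $p_0,p_1$; by \Cref{proposition: predual} (applicable since the relevant bases are shrinking and the duality $\vmo^*=H^1$, $\vmo(H^r)^*=H^1(H^{r'})$ holds by \Cref{thm:dual-vmo(Hp)}) the Haar system is likewise $C_{p_0,p_1}$-strategically reproducible in $\vmo$ and $\vmo(H^r)$. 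Since this constant $C$ is uniform over the whole class $\mathcal{X}$, \Cref{def:simul-strat-rep} gives that the array $(e_{k,j})_{k,j}$ is simultaneously $C$-strategically reproducible in $Z$. With all hypotheses of \Cref{T:2.7} in force and $\lambda=1$, $K(\delta)=C/\delta$, the theorem yields that the identity on $\ell^\infty(X_k:k\in\N)$ factors through $T$ with norm control $\lambda C^2 K(\delta)=C'/\delta$, where $C'$ depends only on $p_0,p_1$; renaming $C'$ as $C$ gives the statement.

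The only genuine obstacle is bookkeeping the constants so that they depend on $p_0,p_1$ alone and not on the individual spaces — in particular making sure that the $L^r(L^s)$ diagonal-factorization and strategic-reproducibility constants are bounded on the compact parameter range $[p_0,p_1]^2$ (this is the content of \Cref{remark:Lr(Ls)} and the $\sup_{p_0\le p\le p_1}C_p<\infty$ statements in \Cref{remark:predual}), and that passing to the $\vmo$-type preduals via \Cref{proposition: predual} does not inflate the constant in a $p$-dependent way. Once that uniformity is confirmed, the proof is a direct citation of \Cref{lem:curved} and \Cref{T:2.7}.
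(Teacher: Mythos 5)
Your proposal follows the same route as the paper: cite \Cref{lem:curved} for uniform asymptotic curvedness, invoke the strategic reproducibility of the one- and bi-parameter Haar systems (together with \Cref{proposition: predual} for the $\vmo$-type spaces) to get simultaneous $C$-strategic reproducibility, use unconditionality for the diagonal factorization property, and then apply \Cref{T:2.7}. One small imprecision: the Haar system is not $1$-unconditional in $L^r$ or $L^r(L^s)$ (only in $H^p$, $H^p(H^q)$ and the $\vmo$-type spaces with their square-function norms); its unconditional constant there is some $C_{p_0,p_1}$ bounded on the compact range $[p_0,p_1]$, which is what the paper says and which is all that is actually needed to get $K(\delta)=C_{p_0,p_1}/\delta$ — so the conclusion is unaffected.
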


\begin{proof}
  Let $(e_{k,j})$ be defined as in \Cref{lem:curved}.  First, note that $(e_{k,j})_j$ has
  unconditional basis constant $C_{p_0,p_1}$ for each $k\in\mathbb{N}$; hence, $(e_{k,j})_j$ has the
  $\frac{C_{p_0,q_1}}{\delta}$-diagonal factorization property for all $k\in\mathbb{N}$.  Secondly,
  for the simultaneous strategical reproducibility of the array $(e_{k,j})$, we refer to the
  argument presented in the proof of \Cref{cor:curved:1}.  Finally, applying \Cref{lem:curved} and
  \Cref{T:2.7} concludes the proof.
\end{proof}

\section{Final remarks and open questions}
\label{sec:final-remarks-open}

Our first question asks whether \Cref{T:2.7} can be true if we drop the assumption that the array
$(e_{(k,j)})$ is uniformly asymptotically curved.
\begin{question}\label{question:1}
  Assume that
  \begin{enumerate}[(i)]
  \item the basis constant of $(e_{(k,j)})_j$, is at most $\lambda$ in $X_k$, for each $k\in\N$;
  \item $(e_{(k,j)})_j$ has the $K$-diagonal factorization property in $X_k$, for each $k\in\N$;
  \item the array $(e_{(k,j)})_{k,j}$ is simultaneously $C$-strategically reproducible in $Z$.
  \end{enumerate}
  Is it true that the array $(e_{(k,j)})$ has the factorization property in $Z = \ell^\infty(X_k)$?
\end{question}

The following example exhibits a sequence of spaces the array of which is \emph{not} uniformly
asymptotically curved, yet the array has the factorization property in $Z$.
\begin{example}\label{example:1}
  Let $(p_k)$ be a dense sequence in $[1,\infty)$.  Then
  \begin{enumerate}[(i)]
  \item the basis constant of $(h_{k,I})_I$ is $1$ in $L^{p_k}$;
  \item $(h_{k,I})_I$ has the $\frac{1}{\delta}$-diagonal factorization
    property~\cite[Remark~6.6]{lechner:motakis:mueller:schlumprecht:2018};
  \item the array $(h_{k,I})$ is simultaneously $1$-strategically
    reproducible~\cite[Remark~6.6]{lechner:motakis:mueller:schlumprecht:2018};
  \item the array $(h_{k,I})$ is \emph{not} uniformly asymptotically curved.
  \end{enumerate}
  In contrast, the array $(h_{k,I})$ has in $Z=\ell^\infty(L^{p_k}:k\kin\N)$ the factorization
  property.
\end{example}
In order to show this is indeed true, we need the following proposition.
\begin{prop}\label{prop:dense}
  Let $(p_k)$ and $(q_k)$ denote two dense sequences in $[1,\infty)$.  Then
  $\ell^\infty(L^{p_k}:k\kin\N)$ is isomorphic to $\ell^\infty(L^{q_k}:k\kin\N)$.
\end{prop}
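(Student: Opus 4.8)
The plan is to use the Pe\l{}czy\'{n}ski decomposition method. Write $Z_p=\ell^\infty(L^{p_k}:k\in\N)$ and $Z_q=\ell^\infty(L^{q_k}:k\in\N)$. First I would record the \emph{square property}: since $L^{p_k}[0,1]$ is isometric to $L^{p_k}[0,\tfrac12]\oplus_{p_k}L^{p_k}[\tfrac12,1]\cong L^{p_k}\oplus_{p_k}L^{p_k}$, and a two-term $\ell^{p_k}$-sum is within Banach--Mazur distance $\sqrt2$ of the corresponding $\ell^\infty$-sum, one gets $d\big(L^{p_k}\oplus_\infty L^{p_k},\,L^{p_k}\big)\le\sqrt2$ with a bound independent of $k$; hence $Z_p\cong\ell^\infty(L^{p_k}\oplus_\infty L^{p_k}:k)\cong Z_p\oplus_\infty Z_p$, and likewise $Z_q\cong Z_q\oplus_\infty Z_q$. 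Granting this, it suffices to show that $Z_p$ embeds as a complemented subspace of $Z_q$ and conversely: writing $Z_p\cong Z_q\oplus_\infty W$ and $Z_q\cong Z_p\oplus_\infty V$ and using the square property, one obtains $Z_p\oplus_\infty Z_q\cong Z_p$ and $Z_p\oplus_\infty Z_q\cong Z_q$, so $Z_p\cong Z_q$.

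The crux is the following claim: \emph{if $r\in[1,\infty)$ and $A\subseteq\N$ is infinite with $r$ a limit point of $(q_j)_{j\in A}$, then $L^r[0,1]$ is complemented in $\ell^\infty(L^{q_j}:j\in A)$ with norm of the projection bounded by an absolute constant.} To prove it, pass to a subsequence of $(q_j)_{j\in A}$ converging to $r$ monotonically --- from below, and staying bounded away from $1$, when $r>1$; from above and with $q_n-1\le 1/n$ when $r=1$ --- and relabel it $(q_n)$. Embed $L^r$ via $\Psi f=(\E[f\mid\mathcal D_n])_n$, the sequence of conditional expectations onto the dyadic level-$n$ algebras. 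Using martingale convergence (and, for $r=1$, the elementary bound $\|\E[f\mid\mathcal D_n]\|_{q_n}\le 2^{n(1-1/q_n)}\|\E[f\mid\mathcal D_n]\|_1$, which together with $q_n-1\le 1/n$ keeps the exponent bounded) one checks $\|f\|_r\le\|\Psi f\|\le 2\|f\|_r$. For the projection, fix a free ultrafilter $\mathcal U$ on $\N$ and let $\Pi g$ be the absolutely continuous part of the weak$^*$-limit $\lim_{\mathcal U}g_n$ computed in $M[0,1]=C[0,1]^*$; then $\|\Pi g\|_{L^1}\le\sup_n\|g_n\|_{q_n}$. When $r>1$, reflexivity of the spaces $L^{q_n}$ forces this weak$^*$-limit to lie already in $L^r$ with the right norm bound; when $r=1$, passing to the absolutely continuous part does the work. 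Since $\E[f\mid\mathcal D_n]\to f$ in $L^1$, we get $\Pi\Psi=\mathrm{id}_{L^r}$, so $\Psi\Pi$ is the required bounded projection.

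With the claim available I would finish by a diagonal construction: enumerate the pairs $(k,m)$ and, for each, choose an index $j(k,m)$ not used before with $q_{j(k,m)}$ in the appropriate one-sided neighbourhood of $p_k$ of radius decreasing in $m$ (possible because $(q_j)$ is dense in $[1,\infty)$; when $p_k=1$ make the radius shrink fast enough). This yields pairwise disjoint infinite sets $A_k\subseteq\N$ with $p_k$ a limit point of $(q_j)_{j\in A_k}$, approached from the side required by the claim. The claim then produces, with uniformly bounded norms, maps $\Psi_k\colon L^{p_k}\to\ell^\infty(L^{q_j}:j\in A_k)$ and $\Pi_k$ in the reverse direction with $\Pi_k\Psi_k=\mathrm{id}$; assembling them coordinatewise realises $Z_p$ as a complemented subspace of $\ell^\infty\big(\ell^\infty(L^{q_j}:j\in A_k):k\big)=\ell^\infty\big(L^{q_j}:j\in\bigcup_kA_k\big)$, which is $1$-complemented in $Z_q$. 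By symmetry $Z_q$ is complemented in $Z_p$, and the decomposition argument of the first paragraph finishes the proof.

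I expect the main obstacle to be the claim, and inside it the exponent $r=1$. There the naive projection --- a weak limit of the $g_n$ in $L^1$ --- fails, because the masses $g_n\,dx$ can concentrate and escape to a singular measure; this is precisely why one first compresses $L^1$ onto the dyadic filtration by conditional expectations and then keeps only the absolutely continuous part of a weak$^*$-limit of measures. Controlling the embedding constant in that step is what dictates the fast convergence $q_n\downarrow 1$, and one must be careful throughout that every constant in the claim is uniform, so that the $\ell^\infty$-sums $\bigoplus_k\Psi_k$ and $\bigoplus_k\Pi_k$ remain bounded operators.
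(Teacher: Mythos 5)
Your proposal is essentially correct, and the central technical device is the same as the paper's: embed $L^r$ into $\ell^\infty(L^{q_{j}}:j\in A)$ by partial Haar sums (your conditional expectations $\E[\,\cdot\mid\mathcal D_n]$ are exactly the paper's $R_n$), and project back by an ultrafilter--type limit (your ``absolutely continuous part of a weak$^*$ limit in $M[0,1]$'' is just a different packaging of the paper's coordinatewise ultrafilter limit of Haar coefficients), with uniform constants so that the maps sum up to a complemented embedding of $\ell^\infty(L^{p_k})$ into $\ell^\infty(L^{q_k})$ and back. The paper organizes the embedding by choosing the subsequence so that the identity $L^{q_{k_n}}_n\to L^p_n$ on the $n$-th dyadic level is a $2$-isomorphism, which implicitly forces a rate; your variant of insisting $q_n\uparrow r$ for $r>1$ is a clean alternative that even gives an isometric embedding without specifying a rate (by Fatou the limit lands in $L^r$, and the reverse estimate $\|\cdot\|_{q_n}\le\|\cdot\|_r$ goes the easy direction), while for $r=1$ you correctly identify the rate condition $q_n-1\le 1/n$ as essential and the projection (absolutely continuous part) as the delicate point. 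Where you genuinely diverge is the final step: you invoke the square property $Z\cong Z\oplus_\infty Z$ and the corresponding version of Pe{\l}czy\'nski decomposition, whereas the paper introduces an auxiliary space $X=\ell^\infty(L^{q_k})$ built from a dense sequence with every value repeated infinitely often, so that $X$ has the accordion property $X\cong\ell^\infty(X)$, and then applies the accordion version of Pe{\l}czy\'nski. Both finishes are valid; yours is marginally more elementary in that it avoids introducing the auxiliary sequence, while the paper's accordion route shows more, namely that every $\ell^\infty(L^{p_k})$ with $(p_k)$ dense is isomorphic to one canonical space. Two cosmetic slips: the distance $d(L^{p}\oplus_\infty L^{p},L^{p})$ is bounded by $2^{1/p}\le 2$ in general, not $\sqrt2$ (irrelevant, since only uniform boundedness is used); and your stated two-sided estimate $\|f\|_r\le\|\Psi f\|\le 2\|f\|_r$ should be read with the understanding that for $r>1$ the correct constants are $1$ and $1$ and for $r=1$ they are $1$ and $2$.
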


\begin{proof}
  In the first step of this proof we fix $p\in[1,\infty)$ and a dense sequence $(q_k)_k$ in
  $[1, \infty)$ and show that $L^p$ is isomorphic to a $2$-complemented subspace of
  $\ell^\infty(L^{q_k}:k\kin\N)$.  By the density of $(q_k)_k$ we may pick a subsequence
  $(q_{k_n})_n$ so that for each $n\in\mathbb{N}$ the identity map $I_n:L^{q_{k_n}}_n\to L^p_n$ is a
  $2$-isomorphism.  We then define $T:L^p\to \ell^\infty(L^{q_{k_n}}:n\kin\N)$ given as follows: if
  $f\in L^p$ can be written as $f = \sum_j\sum_{|I| = 2^{-j}}a_Ih_I$ then
  $Tf = (\sum_{j\leq n}\sum_{|I| = 2^{-j}}a_Ih_I)_n$.  Then clearly, $T$ is a $2$-isomorphic
  embedding of the image.  We will now show that the image of $T$ is $4$-complemented in
  $\ell^\infty(L^{q_{k_n}}:n\kin\N)$.  To see this, we define
  $Q\colon \ell^\infty(L^{q_{k_n}}:n\in\N)\to L^p$ by
  \begin{equation*}
    Q\Bigl( \sum_{j=0}^\infty\sum_{|I|=2^{-j}} a_{n,I} h_I \Bigr)_n
    = \sum_{j=0}^\infty\sum_{|I|=2^{-j}} (\lim_{n\in \mathcal{U}} a_{n,I}) h_I,
  \end{equation*}
  where $\mathcal{U}$ is some fixed non-principal ultrafilter on $\mathbb{N}$.  For each
  $m\in\mathbb{N}$, define $R_m\colon L^p\to L^p$ by
  \begin{equation*}
    R_m\Bigl( \sum_{j=0}^\infty\sum_{|I|=2^{-j}} a_{I} h_I \Bigr)
    = \sum_{j=0}^m\sum_{|I|=2^{-j}} a_{I} h_I .
  \end{equation*}
  We now verify that $Q$ is indeed well defined.  Observe that for
  $(f_n)_n\in\ell^\infty(L^{q_{k_n}}:n\kin\N)$, where
  $f_n = \sum_{j=0}^\infty a_{n,I} h_I\in L^{q_{k_n}}$, $n\in\N$, we have
  \begin{align*}
    \bigl\| R_m Q ((f_n)_n) \bigr\|_{L^p}
    &= \Bigl\| \sum_{j=0}^m\sum_{|I|=2^{-j}} (\lim_{n\in\mathcal{U}} a_{n,I}) h_I \Bigr\|_{L^p}
      = \lim_{n\in\mathcal{U}} \Bigl\| \sum_{j=0}^m\sum_{|I|=2^{-j}} a_{n,I} h_I \Bigr\|_{L^p}\\
    &= \lim_{n\in\mathcal{U}} \| R_m f_n \|_{L^p}
      \leq \lim_{n\in\mathcal{U}} \| R_n f_n \|_{L^p}\\
    &\leq 2\sup_{n} \| R_n f_n \|_{L^{q_{k_n}}}
      \leq 2 \| (f_n) \|_{\ell^\infty(L^{q_{k_n}}:n\in\N)}.
  \end{align*}
  Evidently, $QTf = f$ for all $f\in L^p$, hence, the image of $T$ is $4$-complemented.  Moreover,
  $T$ can be extended to a $2$-isomorphism $\tilde T:L^p\to\ell^\infty(L^{q_k}:k\kin\N)$ with
  $4$-complemented image.  This type of argument goes back to Johnson~\cite{johnson:1972}.

  In the second step we show that if $(p_k)_k$, $(q_k)_k$ are as in the assumption then
  $\ell^\infty(L^{p_k}:k\kin\N)$ is $2$-isomorphic to a $4$-complemented subspace of
  $\ell^\infty(L^{q_k}:k\kin\N)$.  Indeed, we may decompose $\mathbb{N}$ into infinite disjoint sets
  $(M_n)_n$ so that for all $n\in\mathbb{N}$ the sequence $(p_k)_{k\in M_n}$ is dense in
  $[1,\infty)$.  By the first step, for each $n\in\mathbb{N}$ we can find a $2$-embedding of
  $L^{p_n}$ into $\ell^\infty(L^{p_k}:k\kin M_n)$ with $4$-complemented image.  The second step then
  easily follows.
  
  For the final step we fix a dense sequence $(q_k)_k$ in $[1,\infty)$ with the property that each
  term $q_k$ is repeated infinitely many times.  This implies that the space
  $X = \ell^\infty(L^{q_k}:k\kin\N)$ is isometrically isomorphic to $\ell^\infty(X)$, \ie, it
  satisfies the {\em accordion property} (see~\cite[II.B.24]{wojtaszczyk:1991}).  To conclude, we
  show that for an arbitrary dense sequence $(p_k)_k$ the space $V = \ell^\infty(L^{p_k}:k\kin\N)$
  is isomorphic to $X$.  Indeed, by the second step we have that $X$ is complemented in $V$ and $V$
  is complemented in $X$.  By the accordion property of $X$ we deduce that $X$ is isomorphic to $V$.
\end{proof}

\begin{proof}[Verification of~\Cref{example:1}]
  Given an operator $T\colon Z\to Z$ with large diagonal, choose infinite sets
  $\Omega_k\subset\mathbb{N}$ so that $p_k = \lim_{j\in\Omega_k} p_j$, $k\in\mathbb{N}$.  By
  \Cref{T:2.5} there exists an infinite set $\Gamma\subset\mathbb{N}$ for which $\Gamma\cap\Omega_k$
  is infinite for each $k\in\mathbb{N}$ so that $I_{Z_\Gamma}$ factors through $T$.  Since
  $\{p_\gamma : \gamma\in\Gamma\}$ is again dense in $[1,\infty)$, we obtain by \Cref{prop:dense}
  that $Z_{\Gamma}$ is isomorphic to $Z$.  Hence, $I_Z$ factors through $T$.
\end{proof}

An interesting special case of \Cref{question:1} is the following
\begin{question}\label{question:2}
  Assume that $(p_k)$, $1\leq p_k < \infty $ either converges to $1$ or diverges to $\infty$.  Does
  the array $(h_{k,I})$ have the factorization property in $Z=\ell^\infty(L^{p_k}:k\kin\N)$?
\end{question}

\bibliographystyle{abbrv}
\bibliography{bibliography}

\end{document}